\newtheorem*{mainthmA}{Theorem A}
\newtheorem*{mainthmB}{Theorem B}
\newtheorem*{corC}{Corollary C}
\newtheorem{thm}{Theorem}[section]
\newtheorem{lem}[thm]{Lemma}
\newtheorem{qst}[thm]{Question}
\newtheorem{prop}[thm]{Proposition}
\newtheorem{cor}[thm]{Corollary}
\theoremstyle{definition}
\newtheorem{df}[thm]{Definition}
\newtheorem{rk}[thm]{Remark}
\newtheorem{ex}[thm]{Example}
\newtheorem{nt}[thm]{Notation}
\newcommand{\RR}{\mathbb R}
\newcommand{\NN}{\mathbb N}
\newcommand{\G}{\Gamma}
\newcommand{\ol}{\overline}
\newcommand{\mG}{\mathcal{G}}
\newcommand{\mL}{\mathcal{L}}
\newcommand{\mF}{\mathcal{F}}
\newcommand{\mT}{\mathcal{T}}
\newcommand{\mP}{\mathcal{P}}
\newcommand{\mS}{\mathcal{S}}
\newcommand{\vphi}{\varphi}
\newcommand{\veps}{\varepsilon}
  \newcommand{\Z}{\mathbb Z}
\newcommand{\teich}{Teichm\"{u}ller }
\newcommand{\pgraph}{\Delta_r}
\newcommand{\periodiclines}{\mathcal{A}_r}
\newcommand{\from}{\colon}
\newcommand{\al}{\alpha}
\newcommand{\out}{\textup{Out}(F_r)}
\newcommand{\os}{CV_r}
\newcommand{\sig}{\sigma}
\newcommand{\lam}{\lambda}
\newcommand{\uos}{\widehat{CV}_r}
  \renewcommand{\SS}{\mathbb{S}}
\begin{document}

\title{Stable Strata of Geodesics in Outer Space}
\author{Yael Algom-Kfir, Ilya Kapovich, Catherine Pfaff}

\address{\tt Department of Mathematics, University of Haifa \newline
  \indent Mount Carmel;  Haifa, 31905;  Israel
  \newline \indent  {\url{http://www.math.haifa.ac.il/algomkfir/}}, } \email{\tt yalgom@univ.haifa.ac.il}

\address{\tt  Department of Mathematics, University of Illinois at Urbana-Champaign\newline
  \indent 1409 West Green Street, Urbana, IL 61801
  \newline \indent  http://www.math.uiuc.edu/\~{}kapovich, } \email{\tt kapovich@math.uiuc.edu}

\address{\tt Department of Mathematics, University of California at Santa Barbara \newline
  \indent South Hall, Room 6607; Santa Barbara, CA 93106-3080
  \newline
  \indent  {\url{http://math.ucsb.edu/~cpfaff/}}, } \email{\tt cpfaff@math.ucsb.edu}

\date{}

\thanks{The authors acknowledge support from U.S. National Science Foundation grants DMS 1107452, 1107263, 1107367 ``RNMS: Geometric structures And Representation varieties'' (the GEAR Network). The first author was funded by ISF grant 1941/14. The second author was supported by the NSF grants DMS-1405146 and DMS-1710868. All three authors acknowledge the support of the Mathematical Sciences Research Institute during the Fall 2017 semester.}

\maketitle

\begin{abstract}
In this paper we propose an Outer space analogue for the principal stratum of the unit tangent bundle to the \teich space $\mT(S)$ of a closed hyperbolic surface $S$. More specifically, we focus on properties of the geodesics in \teich space determined by the principal stratum. We show that the analogous Outer space ``principal" periodic geodesics share certain stability properties with the principal stratum geodesics of \teich space. We also show that the stratification of  periodic geodesics in Outer space exhibits some new pathological phenomena not present in the Teichm\"uller space context. \end{abstract}

\section{Introduction}
Let $S$ be a closed oriented surface of genus $\ge 2$ and let $\mathcal T(S)$ be the Teichm\"uller space of $S$.  Recall that the unit (co)tangent bundle to $\mathcal T(S)$ is canonically identified with the space $\mathcal Q^1(S)$ of unit area holomorphic quadratic differentials on $S$. The space $\mathcal Q^1(S)$ has a natural stratification, invariant under the Teichm\"uller geodesic flow, according to the orders of zeros of a quadratic differential, with the \emph{principal stratum} $\mathcal Q_{princ}^1(S)$ consisting of quadratic differentials where all zeros are simple.  This stratification of $\mathcal Q^1(S)$  defines the corresponding stratification of the space $\mathbb G\mathcal T(S)$ of all bi-infinite directed \teich geodesics in $\mathcal T(S)$, by looking at the unit tangent vector to $L\in \mathbb G\mathcal T(S)$  at some (equivalently, any) point $X\in L$. Thus we also get a subset $\mathbb G_{princ}\mathcal T(S)\subseteq \mathbb G\mathcal T(S)$ consisting of all \teich geodesics $L\in \mathbb G\mathcal T(S)$ with defining tangent vectors in $\mathcal Q_{princ}^1(S)$.

By the classic work of Thurston, the space $PMF(S)$  of projective measured foliations on $S$ can be viewed as the Thurston boundary $\partial \mathcal T(S)$ of $\mathcal T(S)$.
Note that there is no canonical stratification of $\partial \mathcal T(S)$ corresponding to the stratification of $\mathcal Q^1(S)$ discussed above.  Indeed, one can show that for every point $[\mu]\in \partial \mathcal T(S)$ there exists some \teich geodesic ray $\rho\subseteq \mathcal T(S)$ such that the initial tangent vector of $\rho$ belongs to $\mathcal Q_{princ}^1(S)$. However, for a pseudo-Anosov $g\in MCG(S)$ there is a well-defined notion of $g$ being \emph{principal}, which corresponds to the bi-infinite $g$-periodic geodesic in $\mathcal T(S)$ being defined by a tangent vector from $\mathcal Q_{princ}^1(S)$, or equivalently, to the stable foliation $[\mu_+(g)]$ coming from a quadratic differential in the principal stratum $\mathcal Q_{princ}^1(S)$.

An important result of Kaimanovich and Masur~\cite{km96} concerns the boundary behavior of a random walk on the mapping class group $MCG(S)$, satisfying some mild restrictions. They proved that for every basepoint $X\in\mathcal T(S)$, for almost every trajectory $\omega$ of this random walk, projecting $\omega$ to $\mathcal T(S)$ from the basepoint $X$ via the orbit map gives a sequence in $\mathcal T(S)$ that converges to a uniquely ergodic point of $\partial \mathcal T(S)=PMF(S)$.  Thus we get an exit measure $\nu_X$ on $\partial \mathcal T(S)$ for the projected random walk in $\mathcal T(S)$ starting at $X$. This exit measure is supported on the set $\mathcal{UE}\subseteq PMF(S)$ of uniquely ergodic projective measured foliations.  Maher~\cite{m11} later showed that, again under some mild assumptions on the random walk, the element $g_n\in MCG(S)$, obtained after $n$ steps of the walk, is pseudo-Anosov with probability tending to $1$ and $n\to\infty$. (Rivin~\cite{riv08} had earlier proved the same conclusion about $g_n$ for the simple random walk on $MCG(S)$ corresponding to a finite generating set of $MCG(S)$.)
However, until recently, little else has been known about the properties of a ``random" point of $\partial \mathcal T(S)$ corresponding to the exit measure $\nu_X$, or about the properties of the stable foliation $[\mu_+(g_n)]$ of the pseudo-Anosov $g_n$ as above.

In \cite{gm16}, Gadre and Maher shed light on these questions. They proved that if the support of a random walk on $MCG(S)$ is ``sufficiently large" and contains a principal pseudo-Anosov $g$, then  for every $X\in \mathcal T(X)$ and for $\nu_X$-a.e. point $\xi\in \mathcal UE\subseteq \partial \mathcal T(S)$, the \teich geodesic from $X$ to $\xi$ has its initial tangent vector in $\mathcal Q_{princ}^1(S)$. They also proved that in this setting, with probability tending to $1$ as $n\to\infty$, after $n$ steps the random walk produces a principal pseudo-Anosov $g_n\in MCG(S)$.
Gadre and Maher also obtained the following stability result for principal axes.
For $X,Y$ in the axis $L_g\subseteq \mathcal T(S)$ of $g$ and for $R\ge 0$, denote by $\Gamma_R(X,Y)$ the collection of all oriented bi-infinite \teich geodesics $L\subseteq \mathcal T(S)$ with uniquely ergodic vertical and horizontal foliations such that $B(X,R)\cap L \neq \emptyset$, $B(Y,R)\cap L\ne \emptyset$ and such that the first point in $L\cap (B(X,R)\cup B(Y,R))$ belongs to $B(X,R)$. Here the balls $B(X,R)$ and $B(Y,R)$ are taken with respect to the \teich metric on $\mathcal T(S)$. A crucial ingredient in the proof of the main results of \cite{gm16} is the following ``stability" property of $L_g$ for a principal pseudo-Anosov $g$; see \cite[Proposition~2.7]{gm16}:

\begin{thm}[Gadre-Maher]\label{thm:GM}
Let $S$ be a closed oriented surface of genus $\ge 2$ and let $g\in MCG(S)$ be a principal pseudo-Anosov. Then for any $R\ge 0$ there exists $D>0$ such that if $X,Y\in L_g$ have $d(X,Y)\ge D$, then every $L\in \Gamma_R(X,Y)$ is principal.
\end{thm}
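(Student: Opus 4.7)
My plan is to argue by contradiction, using the fact that the principal stratum $\mQ_{princ}^1(S)$ is an open subset of the unit quadratic differential bundle $\mQ^1(S)$ (equivalently, the union of non-principal strata is closed), together with a compactness extraction of a limit geodesic. Suppose the statement fails for some fixed $R \ge 0$: there exist $X_n, Y_n \in L_g$ with $d(X_n,Y_n) \to \infty$ and non-principal $L_n \in \Gamma_R(X_n,Y_n)$. Since $g$ acts on $L_g$ by translation of length $\lambda(g) > 0$, and principality is $MCG(S)$-invariant, I first apply appropriate powers of $g$ (shifting the data $(X_n,Y_n,L_n)$ together equivariantly) to arrange that $X_n$ lies in a fixed compact fundamental domain of $\c{g}$ on $L_g$. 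Passing to a subsequence, $X_n \to X_\infty \in L_g$, while $Y_n$ escapes along $L_g$ toward $[\mu_+(g)]$ in the Thurston compactification.

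Next I extract a limit geodesic. Let $W_n$ be the first entry point of $L_n$ into $\overline{B(X_n,R)}$ and let $q_n \in \mQ^1(W_n)$ be the unit tangent vector of $L_n$ at $W_n$. The $W_n$ lie in a compact neighborhood of $X_\infty$, and the unit QD bundle is compact over compact subsets of $\mT(S)$, so after a further subsequence I obtain $W_n \to W_\infty$ and $q_n \to q_\infty \in \mQ^1(W_\infty)$; let $L_\infty$ be the bi-infinite \teich geodesic defined by $q_\infty$.

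I then identify the forward endpoint of $L_\infty$. The axis $L_g$ of the pseudo-Anosov $g$ is strongly contracting in $\mT(S)$ (a theorem of Minsky). Because each $L_n$ also meets $\overline{B(Y_n,R)}$ and $d(X_n,Y_n) \to \infty$, the forward ray of $L_n$ from $W_n$ fellow-travels the forward ray of $L_g$ from $X_n$ over arbitrarily long intervals. Strong contraction, together with the unique ergodicity of $[\mu_+(g)]$ and Masur's criterion, forces the uniquely ergodic forward endpoints $[\mu_n^+]$ of $L_n$ to converge to $[\mu_+(g)]$ in $PMF(S)$, and so the forward endpoint of $L_\infty$ is also $[\mu_+(g)]$. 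The vertical measured foliation of $q_\infty$ is therefore projectively $[\mu_+(g)]$, which has only simple singularities since $g$ is principal; hence $q_\infty \in \mQ_{princ}^1(S)$. By openness of the principal stratum and $q_n \to q_\infty$, we conclude $q_n \in \mQ_{princ}^1(S)$ for all sufficiently large $n$, contradicting the non-principality of $L_n$.

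The main obstacle is the step establishing $[\mu_n^+] \to [\mu_+(g)]$ in $PMF(S)$. In general, two distinct \teich geodesics can fellow-travel for arbitrarily long times without having nearby forward endpoints in the Thurston compactification; the argument here depends essentially on $L_g$ being strongly contracting (so that fellow-traveling over long times forces tangent-direction control near $L_g$) and on the unique ergodicity of $[\mu_+(g)]$ (which, via Masur's criterion, gives Thurston-boundary continuity of the endpoint map along sequences that track $L_g$). The compactness extraction and the appeal to openness of the principal stratum are then formal.
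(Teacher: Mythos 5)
This theorem is not proved in the paper; it is stated as background and cited to Gadre--Maher, \cite[Proposition~2.7]{gm16}, so there is no internal argument to compare yours against. That said, your sketch has the right shape for a result of this kind: translate by $\langle g\rangle$ to gain compactness, extract a limit unit quadratic differential $q_\infty$, show it lies in $\mathcal Q^1_{princ}(S)$, and conclude by openness of the principal stratum.

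One logical step is misordered. You attempt to establish ``$[\mu_n^+]\to[\mu_+(g)]$ in $PMF(S)$'' as a stepping stone, but the rays $L_n$ only track $L_g$ over the finite windows between $B(X_n,R)$ and $B(Y_n,R)$ and may then escape to completely unrelated uniquely ergodic directions, so that convergence of endpoints is not available a priori. The right order is to argue about $L_\infty$ directly: for each fixed $t\ge0$ and all large $n$, the point $L_n(t)$ lies in the portion of $L_n$ sandwiched between $B(X_n,R)$ and $B(Y_n,R)$, so Minsky's contraction (Morse) property bounds $d(L_n(t),L_g)$ by some $C=C(R)$ independent of $n$ and $t$; taking $n\to\infty$ gives $d(L_\infty(t),L_g)\le C$ for \emph{all} $t\ge0$. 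Now invoke Masur's result that a Teichm\"uller ray staying a bounded distance from a ray with uniquely ergodic vertical foliation is asymptotic to it and has the same vertical foliation; since $g$ is principal, $[\mu_+(g)]$ has only $3$-pronged singularities, so $q_\infty$ has only simple zeros, i.e.\ $q_\infty\in\mathcal Q^1_{princ}(S)$. The convergence $[\mu_n^+]\to[\mu_+(g)]$ is then a \emph{consequence} of $q_n\to q_\infty$ and continuity of the vertical-foliation map, not an input. Finally, you should say explicitly why $\mathcal Q^1_{princ}(S)$ is open in $\mathcal Q^1(S)$ (it is the unique top-dimensional, dense stratum, with complement a finite union of lower-dimensional locally closed pieces), since that openness is what converts the statement about $q_\infty$ into a statement about $q_n$ for finite $n$. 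With these adjustments the argument is sound.
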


We are interested in investigating the corresponding questions in the $Out(F_r)$ setting, where $r\ge 2$. Similar to the mapping class group setting, it is by now well-known~\cite{mt14,tt16} that, under some mild assumption on the support, for a random walk on $Out(F_r)$, an element $\vphi_n\in Out(F_r)$ obtained after $n$ steps of the walk is atoroidal fully irreducible with probability tending to $1$ as $n\to\infty$. It is also known that projecting a random orbit of this walk to the Culler-Vogtmann Outer space $\os$ (starting at some basepoint $X\in \os$) gives a sequence in $\os$ that with probability $1$ converges to some point in $[T]\in \partial \os$ and, moreover, that the $\mathbb R$-tree $T$ is uniquely ergodic~\cite{npr14}. The proofs that $\vphi_n$ is generically fully irreducible and atoroidal involve projecting a random walk on $Out(F_r)$ to the free factor complex in the first case and to the co-surface graph in the second case. These are both Gromov hyperbolic and one argues that $\vphi_n$ acts loxodromically on the hyperbolic graph in question. Addressing the index properties,  Kapovich and Pfaff~\cite{kp15} proved that for a  ``train-track directed" random walk on $Out(F_r)$, the element $\vphi_n$ is, with an asymptotically positive probability, an ageometric fully irreducible outer automorphism with a 1-element index list $\{\frac{3}{2}-r\}$ and that the corresponding  ideal Whitehead graph is complete (the relevant definitions are discussed next below).
We wish to understand how this statement generalizes to the case of a more general random walk on $\out$.

One of the difficulties in the $Out(F_r)$ setting is finding a suitable notion of a ``principal stratum." In the original context of a closed hyperbolic surface $X$, if $[\mu]\in PMF(S)$ is uniquely ergodic and with the dual $\mathbb R$-tree having all branch-points being trivalent, then $[\mu]\in \mathcal Q_{princ}$. This fact motivates us to use the index properties of a geodesic in Outer space when defining strata in the space of such geodesics.

Given a nongeometric fully irreducible $\vphi\in Out(F_r)$ (where ``nongeometric'' means that $\vphi$ is not induced by a homeomorphism of a surface $S$ with $\pi_1(S)\cong F_r$) one can define a conjugacy class invariant called the \emph{ideal Whitehead graph} $IW(\vphi)$ of $\vphi$ (see \S 2.6 for details). The graph $IW(\vphi)$ captures essential information about the structure of the attracting lamination of $\vphi$ and therefore of branch-points of the stable $\mathbb R$-tree $T_\vphi$ of $\vphi$ (as well as about the interaction of ``directions'' in $T_\vphi$ at those branch-points). The graph $IW(\vphi)$ can be read-off, via an explicit procedure, from any train track  representative of $\vphi$. In addition, one can also define the \emph{index list} for $\vphi$ (recording the sizes of components of $IW(\vphi)$), and the \emph{index sum} $i(\vphi)$, obtained by summing up the numbers in the index list of $\vphi$.
Unlike in the surface case, there may be many types of Ideal Whitehead graphs with the same index list and we shall see that stability properties are related to the graph types rather than the index lists (or sums).
We say that a finite graph $\mathcal G$ is $r$-\emph{dominant} if $\mathcal G$ is a union of complete graphs, each with $\ge 3$ vertices, and if the index sum of $\mathcal G$ is $\frac{3}{2}-r$. Of special interest is the $r$-dominant graph all of whose components are triangles, we denote it $\pgraph$.

Let $r\ge 3$ and let $\os$ denote the (projectivized) Culler-Vogtmann Outer space for the free group $F_r$. We denote by $\mathcal F_r$ the set of all bi-infinite fold lines in $\os$, where folds are performed one at a time (see Definition \ref{d:SimpleFoldLines} below for the precise formulation).  Note that all elements of $\mathcal F_r$ are bi-infinite geodesics for the asymmetric Lipschitz metric on $\os$.
We denote by $\periodiclines$ the set of all ``axes,'' i.e. the set of all $L\in \mathcal F_r$ such that $L$ is a periodic fold line defined by an expanding irreducible train track representative of some $\vphi\in Out(F_r)$. We endow $\mathcal F_r$ with a natural topology, where for $L, L'\in \mathcal F_r$, the line $L'$ is ``close'' to $L$ if there exist a ``large'' $R\ge 1$ and  a ``small'' $\veps>0$ such that some subsegment $J$ of $L'$ of length $R$ is contained in the $\veps$-neighborhood of $L$ (with respect to the symmetrized Lipschitz metric on $\os$). See Definition~\ref{d:TopologyOnSpaceFoldLines} below for details. All of the various subsets of $\mathcal F_r$ discussed below are then given the subspace topology.

\begin{df}[Dominant and principal strata, and their basins]\label{d:strataNbasins}
Let $r\ge 3$ and let $\mathcal G$ be a graph. We define the \emph{$\mathcal G$-basin} $B\mathcal S_r(\mathcal G)\subseteq \periodiclines$ as the set of all $L\in \periodiclines$ such that $L$ is $\vphi$-periodic for $\vphi\in Out(F_r)$ ageometric fully irreducible and satisfying that $IW(\vphi)$ is a union of components of $\mathcal G$. We define the \emph{$\mathcal G$-stratum} $\mathcal S_r(\mathcal G)\subseteq B\mathcal S_r(\mathcal G)$ as those lines for which the corresponding $\vphi$ satisfies $IW(\vphi) \cong \mG$. Thus $\mathcal S_r(\mathcal G)\subseteq B\mathcal P_r(\mathcal G) \subseteq \periodiclines$.

If $\mathcal G$ is an $r$-dominant (resp. $r$-principal) graph, we say $\mathcal S_r(\mathcal G)$ is a \emph{dominant stratum}  (resp. $\mP_r$ is the $r$-principal stratum) and $B\mS_r(\mG)$ is dominant (resp. $B\mP_r$ is the principal basin).
\end{df}

The results of Mosher-Pfaff~\cite{mp13} imply that if $\vphi\in Out(F_r)$ is $\mathcal G$-dominant for some $r$-dominant graph $\mathcal G$, then $\vphi$ is a ``lone axis" fully irreducible outer automorphism, i.e. $\vphi$ has a unique axis in $\os$. In particular, this fact applies to all principal $\vphi\in Out(F_r)$.

Recall that $\os$ is a simplicial complex of dimension $3r-4$ (with some faces missing).  For an integer $k\ge 0$, we will denote by $\os^{(k)}$ the $k$-skeleton of $\os$.
Our main result is the following attracting/stability property for dominant strata:

\begin{mainthmA}\label{t:A}
Let $r\ge 3$ and let $\mathcal G$ be an $r$-dominant graph. Let $L \in \mathcal S_r(\mathcal G)$. Then there exist $0\le k\le 3r-4$ and a neighborhood $U\subseteq \periodiclines$ of $L$ in $\periodiclines$ with the following properties:

\begin{itemize}
\item[(a)]For each $L'\in U$ with $L'\subseteq \os^{(k)}$, we have $L'\in B\mathcal S_r(\mathcal G)$.
\item[(b)] For each $L'\in U$ with $L'\subseteq \os^{(k)}$ and with $L'$ containing no full folds, we have $L'\in \mathcal S_r(\mathcal G)$.
\end{itemize}
\end{mainthmA}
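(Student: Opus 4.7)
The plan is to exploit the lone-axis property for automorphisms with $r$-dominant ideal Whitehead graph to pin down the combinatorics of $L$ as rigid data, and then show that nearby periodic fold lines lying in the correct skeleton of $\os$ inherit this data up to controlled modifications corresponding to full versus partial folds. Since $L\in \mathcal S_r(\mathcal G)$, there is an ageometric fully irreducible $\vphi\in\out$ with $IW(\vphi)\cong \mathcal G$ whose axis is $L$. By the Mosher--Pfaff result cited after Definition~\ref{d:strataNbasins}, $r$-dominance of $\mathcal G$ makes $\vphi$ a lone axis fully irreducible, so $L$ is the unique axis of $\vphi$ and admits an essentially unique expanding irreducible train track representative $f\colon\Gamma\to\Gamma$ of some power of $\vphi$, together with a canonical fold factorization $f = f_N\circ\cdots\circ f_1$ whose iteration parametrizes $L$. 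Let $k$ be the maximum dimension of a simplex of $\os$ visited by $L$; this is the skeleton level to use.

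To choose $U$, let $\ell > 0$ be the translation length of $\vphi$ along $L$. Pick $R$ a sufficiently large multiple of $\ell$ and $\veps > 0$ small enough that for every $L'\in \mathcal F_r$ having a length-$R$ subsegment in the symmetrized $\veps$-neighborhood of $L$, the simplices traversed by that subsegment are forced to meet the simplices traversed by $L$, and the fold moves of $L'$ along the segment are forced into one-to-one correspondence with a run of fold moves of $L$. Let $U\subseteq \periodiclines$ be the resulting neighborhood.

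Now suppose $L'\in U$ is an axis contained in $\os^{(k)}$. Because $L'$ lies in simplices of dimension at most $k$ while shadowing $L$ for time exceeding $\ell$, the graphs parametrizing $L'$ during the shadowing interval cannot strictly refine those parametrizing $L$; they can only agree with them or arise by promoting some partial folds of $f$ to full folds. Periodicity of $L'$ then propagates this rigidity over all of $L'$: its fold decomposition is obtained from the factorization of $f$ by replacing a (possibly empty) subset of the $f_i$ by their full-fold versions. Let $\vphi'\in\out$ be the associated automorphism, with induced train track $f'$. Full folds erase the illegal turn they act on, while partial folds preserve all illegal turns, so the periodic local Whitehead graphs at the vertices of $f'$ are obtained from those of $f$ by removing exactly those components corresponding to illegal turns that were promoted. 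Since the ideal Whitehead graph of a fully irreducible is the disjoint union of the periodic local graphs, we conclude $IW(\vphi')$ is a union of components of $\mathcal G$, giving (a); and if no full folds occur along $L'$, no components are removed and $IW(\vphi')\cong\mathcal G$, giving (b). Ageometric full irreducibility of $\vphi'$ is ensured by the shape of $IW(\vphi')$: its components are complete on $\ge 3$ vertices with negative index sum, which is incompatible with geometric or reducible behavior.

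The technical heart of the argument is the rigidity claim in the middle paragraph: verifying that the $k$-skeleton constraint really does force the fold moves along $L'$ into correspondence with those of $L$, while keeping clean bookkeeping on which illegal turns survive. Without restricting to $\os^{(k)}$, nearby axes could inhabit strictly higher-dimensional simplices whose metric interiors resolve partial folds into finer pieces, producing ideal Whitehead graphs that are not unions of components of $\mathcal G$. The lone axis hypothesis is the essential input preventing this, since it removes the degrees of freedom in the combinatorial fold data of $\vphi$ that would otherwise support such perturbations.
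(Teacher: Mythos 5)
Your setup is right --- $r$-dominance forces $\vphi$ to be a lone axis automorphism, $L$ is its unique axis, and $k$ is chosen so $L \subset \os^{(k)}\setminus\os^{(k-2)}$ --- but the core rigidity claim in your middle paragraph is false, and this leaves the real work undone.

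You assert that a nearby periodic $L'$ in $\os^{(k)}$ must have a fold decomposition ``obtained from the factorization of $f$ by replacing a (possibly empty) subset of the $f_i$ by their full-fold versions,'' and that periodicity of $L'$ propagates this over the whole line. That is not what happens. What the shadowing condition forces (and what the paper proves in its analogue of your middle step) is that the fold sequence $\mathfrak h$ for $L'$ \emph{contains} the fold sequence $\mathfrak g$ for $L$ as a consecutive block --- but $L'$ may have longer period, and outside that block its folds are unconstrained. So up to cyclic permutation $h$ factors as $h = f \circ g^n$ for some extra map $f$, not as a termwise full-fold promotion of $g$'s factors. Theorem~B and Example~6.2 in the paper are exactly of this $h = k\circ f^n$ form, and they are the whole point: $L'$ can leave the stratum precisely because of the tail $f$, not because partial folds of $g$ become full.

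Relatedly, your mechanism for how components of $IW$ get dropped is off. It is not that full folds ``erase illegal turns''; it is that a full fold in $\mathfrak h$ can identify two vertices of $\Gamma$, which can make a $g$-periodic (hence principal) vertex become $h$-nonperiodic. Nonperiodic vertices contribute a stable Whitehead graph component to $IW(\vphi)$ but not to $IW(\vphi')$. This vertex-collision mechanism is what makes (b) go through when no full folds occur.

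Finally, your claim that ageometric full irreducibility of $\vphi'$ follows ``from the shape of $IW(\vphi')$'' is circular: you cannot read $IW(\vphi')$ off $h$ until you know $\vphi'$ is fully irreducible and $h$ is a PNP-free train track representative. The paper has to verify from scratch that $h$ is a train track map, that it admits no periodic Nielsen paths (the hardest step, requiring the ``legalizing'' power $g^3$ inside $h$), that its transition matrix is Perron--Frobenius, and that its local Whitehead graphs are connected, so that the Ageometric Full Irreducibility Criterion applies. None of this is addressed in your proposal, and the PNP-prevention step in particular is not a formality --- it is the content of Section~4.
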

See Definition~\ref{d:Folds} for terminology regarding full folds. Note, the conclusion of Theorem~A implies that each $L'\in U$ is an axis of an ageometric fully irreducible element of $Out(F_r)$. Moreover, in the case of (b), $L'$ is the unique axis of that fully irreducible in $\os$ (\cite{mp13}).

Our results suggest that for a reasonable random walk on $Out(F_r)$, for a random fully irreducible $\vphi_n\in Out(F_r)$ obtained after $n$ steps of the walk, there are several possibilities for $IW(\vphi_n)$ that each occur with an asymptotically positive probability as $n\to\infty$.

\begin{qst}
Is the conclusion of Theorem A true only for an $r$-dominant $\mG$?
\end{qst}

It turns out that it is, in general, not possible to replace $B\mathcal S_r(\mathcal G)$ by $\mathcal S_r(\mathcal G)$ in the conclusion of Theorem~A(a) above. We show that certain kinds of pathologies exist that can force $L'\in U$ to fall out of the dominant  $\mathcal G$-stratum and that the best one can conclude is that $L'\in B\mathcal S_r(\mathcal G)$:

\begin{mainthmB}\label{t:B}
There exists a principal fully irreducible outer automorphism $\vphi \in Out(F_3)$ with a train track representative $f \colon \Gamma \to\Gamma$ with a Stallings fold decomposition $\mathfrak f$, such that for every $n\ge 1$ there exists a nonprincipal fully irreducible outer automorphism  $\psi_n \in Out(F_3)$ with a train track representative $g_n \colon \Gamma \to\Gamma$ with a Stallings fold decomposition $\mathfrak g_n$ such that $\mathfrak g_n$ starts with $\mathfrak f^{n}$.
\end{mainthmB}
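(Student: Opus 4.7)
The plan is to carry out an explicit construction in three steps.

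First, I select a principal fully irreducible $\vphi\in\out$ (for $r=3$) admitting an affine train track representative $f\colon\Gamma\to\Gamma$ on the 3-petaled rose with an explicit Stallings fold decomposition $\mathfrak f = u_k\circ\cdots\circ u_1$. Suitable candidates are the rank-$3$ lone axis examples of Mosher--Pfaff and Pfaff, for which $IW(\vphi)\cong \pgraph$, the disjoint union of three triangles. I verify that $\mathfrak f$ is rigid --- no two consecutive folds can be reordered or cancelled against one another --- so that the concatenation $\mathfrak f^n$ is a valid Stallings fold decomposition of $f^n$ for every $n\ge 1$.

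Second, for each $n\ge 1$ I produce a short fold suffix $\mathfrak h_n$ such that $g_n := \mathfrak h_n\circ f^n$ is an affine train track map on $\Gamma$ and such that the concatenation $\mathfrak g_n := \mathfrak h_n\circ \mathfrak f^n$ is itself a Stallings fold decomposition of $g_n$. Let $\psi_n\in\out$ be the automorphism represented by $g_n$. I check that $\psi_n$ is fully irreducible by verifying that the transition matrix of $g_n$ is Perron--Frobenius, that $g_n$ has no illegal turn in its folding sequence, and that $g_n$ admits no periodic indivisible Nielsen path; the Bestvina--Feighn--Handel criteria then yield full irreducibility (with ageometricity a byproduct).

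Third, I check non-principality by showing that $IW(\psi_n)\not\cong\pgraph$. The ideal Whitehead graph can be read off from the local Whitehead graph at the vertex of $\Gamma$ together with the orbits of gates under $g_n$, and the suffix $\mathfrak h_n$ is chosen precisely so as to introduce at least one new ``taken'' turn not present in the turn orbit of any iterate of $f$. This merges at least two of the three triangle components of $IW(\vphi)$ into a strictly larger complete component (typically a $K_4$), producing an ideal Whitehead graph that is still $r$-dominant but is not $r$-principal. Hence $\psi_n\in B\mS_3(\pgraph)\setminus\mS_3(\pgraph)$, and in particular $\psi_n$ is fully irreducible but non-principal.

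The hard part is the simultaneous balancing required in the second and third steps: the suffix $\mathfrak h_n$ must be combinatorially compatible with the unfolded turn structure left by $f^n$ in order to preserve the train-track property, must yield a Perron--Frobenius transition matrix without creating any periodic Nielsen path, and yet must remain nontrivial enough to force the desired change in the ideal Whitehead graph. Since as $n$ grows $f^n$ imposes an ever richer set of constraints on which gates remain separate, the fold suffix may genuinely need to depend on $n$, and one must argue uniformly that a valid choice of $\mathfrak h_n$ exists for every $n\ge 1$. This is unavoidable combinatorial bookkeeping carried out on one concrete train track map.
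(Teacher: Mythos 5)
Your overall plan---start with an explicit principal lone-axis example, post-compose its train track map with a short fold suffix, and verify full irreducibility by a criterion involving the Perron--Frobenius property and absence of PNPs---is structurally similar to the paper's, but two of your concrete choices are wrong in ways that would derail the argument.

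First, a rank-$3$ principal outer automorphism cannot have a train track representative on the $3$-petaled rose. The graph $\pgraph$ has $2r-3=3$ disjoint triangles, hence $9$ vertices, and the ideal Whitehead graph is assembled from stable Whitehead graphs at principal vertices. A rose has a single vertex with only $2r=6$ directions, so its (stable) Whitehead graph has at most $6$ vertices; the best an $r$-dominant rose example could achieve in rank $3$ is $IW\cong K_5$, which is dominant but not principal. The paper instead uses the Coulbois--Lustig ``maximal odd'' example whose train track map lives on a $3$-vertex, $5$-edge graph $\Gamma_1$ (one vertex of degree $4$, two of degree $3$), and this is what makes $IW(\vphi)$ a union of three triangles.

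Second, and more seriously, your proposed mechanism for destroying principality---introducing a new taken turn so as to merge two triangle components into a $K_4$---is not the one the paper uses, and in fact Lemma~\ref{l:Neighborhood}(d) shows it cannot happen for the kind of fold suffix you would be appending: the proof there shows the local Whitehead graphs of $g_n$ and $f$ coincide, because any new taken turn would have to involve the unique nonperiodic direction, which is not in the image of the turn map. The paper's actual mechanism is that the suffix $k$ contains a \emph{full} fold that identifies two vertices of $\Gamma_1$; this makes one of the three principal vertices of $f$ into a nonperiodic vertex for $g_n=k\circ f^n$, so its stable Whitehead graph simply drops out and $IW(\psi_n)$ is a union of two triangles rather than three. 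Without this vertex-identification idea you do not get the component of $IW$ to disappear.

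Finally, you flag as a potential difficulty that the suffix ``may genuinely need to depend on $n$.'' In the paper it does not: the map $k$ is fixed once and for all, $g_n := k\circ f^n$, and the lemmas for $n=1$ apply verbatim with $f$ replaced by $f^n$. The uniformity you are worried about is free. It would also be cleaner to cite the Full Irreducibility Criterion of~\cite{IWGII} (PF transition matrix, connected local Whitehead graphs, no PNPs) rather than a vaguer appeal to Bestvina--Feighn--Handel, since that is exactly the criterion the verification is designed to feed.
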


Theorem~B immediately implies:

\begin{corC}
For $r=3$, there exist a principal periodic geodesic $L\in \mP_r$ in $\os$ and a sequence of nonprincipal periodic geodesics $\{L_n\}_{n=1}^\infty \subseteq B\mP_r - \mP_r$ such that $\displaystyle \lim_{n \to \infty} L_n = L$.
\end{corC}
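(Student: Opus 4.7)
The plan is to deduce Corollary~C directly from Theorem~B by taking the axes of the fully irreducibles produced there. Let $L$ be the periodic fold line determined by the train-track representative $f \colon \Gamma \to \Gamma$ of the principal $\vphi$, together with its Stallings fold decomposition $\mathfrak f$, and for each $n\ge 1$ let $L_n$ be the periodic fold line determined by $g_n \colon \Gamma \to \Gamma$ and $\mathfrak g_n$. Since $\vphi$ is principal, $L \in \mP_r$ by definition, and since each $\psi_n$ is nonprincipal, $L_n \notin \mP_r$.

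For the convergence $L_n \to L$ in $\mathcal F_r$, I would use that $L$ is the bi-infinite concatenation of $\Z$-many copies of the fold path $\mathfrak f$ (a translation fundamental domain for $\vphi$ acting on $L$), and likewise $L_n$ is the bi-infinite concatenation of $\Z$-many copies of $\mathfrak g_n$. Because $\mathfrak g_n$ begins with $\mathfrak f^n$, the forward ray of $L_n$ from its basepoint at $\Gamma$ coincides with the forward ray of $L$ from $\Gamma$ as a parametrized path, for a length at least $n \cdot \ell(\mathfrak f)$. Hence in the neighborhood basis for the topology on $\mathcal F_r$ we may take the witness subsegment $J \subseteq L_n$ of length $R = n \cdot \ell(\mathfrak f)$ to lie in $L$ itself, with $\veps = 0$. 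Letting $n \to \infty$ yields $L_n \to L$.

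It remains to show that $L_n \in B\mP_r$ for all $n$ sufficiently large. I would apply Theorem~A to $L \in \mS_r(\pgraph)$ with $\mG = \pgraph$, which produces a neighborhood $U \subseteq \periodiclines$ of $L$ and an integer $0 \le k \le 3r - 4$ such that every $L' \in U$ with $L' \subseteq \os^{(k)}$ lies in $B\mS_r(\pgraph) = B\mP_r$. Since $L$ and each $L_n$ are fold lines of train track maps on a common underlying graph $\Gamma$, each $L_n$ should lie in the same skeleton as $L$. For $n$ large, the convergence above forces $L_n \in U$, and combined with $L_n \notin \mP_r$ this gives $L_n \in B\mP_r - \mP_r$; discarding the finitely many initial terms and reindexing produces the sequence required by the corollary.

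The main obstacle is verifying the skeleton hypothesis $L_n \subseteq \os^{(k)}$ so that Theorem~A(a) can be invoked cleanly. This is a bookkeeping step about which simplices of $\os$ the post-$\mathfrak f^n$ portion of $\mathfrak g_n$ traverses. It is also plausible that the explicit construction inside the proof of Theorem~B already places $IW(\psi_n)$ as a proper union of triangles of $\pgraph$; in that case $\psi_n \in B\mP_r$ would be immediate, bypassing Theorem~A and reducing Corollary~C to the convergence statement alone.
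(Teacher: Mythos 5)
Your approach matches the paper's, which offers no proof at all beyond ``Theorem~B immediately implies,'' and your closing observation is the right one to lean on: the lemma preceding Theorem~B computes $IW(\psi_n)$ explicitly as a union of two triangles, which is a union of a subset of the three triangle components of $\Delta_3$, so $L_n\in B\mathcal P_3\setminus\mathcal P_3$ follows directly from Definition~\ref{d:Basin} without invoking Theorem~A or worrying about the $\os^{(k)}$ hypothesis. One small but genuine inaccuracy is the claim that the forward rays of $L_n$ and $L$ through $\Gamma$ literally coincide, so that one may take $\veps=0$. A periodic fold line begins at $\Gamma$ equipped with the Perron--Frobenius length metric of the defining train track map, and those metrics differ for $f$ and for $g_n=k\circ f^n$ (the transition matrix of $g_n$ is $M_f^{\,n}M_k$, whose PF eigenvector is not that of $M_f$). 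Thus the two fold paths along $\mathfrak f^n$ pass through the same simplices of $\os$ but are not pointwise equal. The conclusion still holds because iterating $f$ contracts the projective simplex of metrics on $\Gamma$ toward $\ell_f$: given $R$ and $\veps$, choose $N$ so large that after $N$ applications of $f$ the two metrics are $\veps$-close, and then for $n\gg N$ the portion of $L_n$ carrying the remaining $n-N$ copies of $\mathfrak f$ is a subsegment of length tending to infinity that lies in $N(L,\veps)$. This yields $L_n\to L$ in the topology of Definition~\ref{d:TopologyOnSpaceFoldLines}, completing the corollary.
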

The cause of the pathologies exhibited in Theorem~B is that the folding process may identify vertices. Hence, some $f$-periodic vertices of $\Gamma$ may become nonperiodic for $g_n$. Such vertices contribute to $IW(\vphi)$ but not to $IW(\psi_n)$.

\subsection*{Acknowledgements}
This paper arose in response to a question of Lee Mosher. The authors would like to thank Mladen Bestvina, Joseph Maher, Lee Mosher, and Kasra Rafi for helpful and interesting conversations, as well as the MSRI for its hospitality.

\section{Background \& Definitions}{\label{ss:PrelimDfns}}

Given a free group $F_r$ of rank $r \geq 2$, we choose once and for all a free basis $A=\{X_1, \dots, X_r\}$. Let $R_r = \vee_{i=1}^r \SS^1$ denote the graph with one vertex and $r$ edges.
We choose also once and for all an orientation on $R_r$ and
identify each positive edge of $R_r$ with an element of the chosen free basis. Thus, a cyclically reduced word in the basis corresponds to an immersed loop in $R_r$.

\subsection{Outer space $\os$}{\label{ss:CVr}}

\begin{df}[Marked metric $F_r$-graph]{\label{d:markedmetricgraph}}
Let $r\ge 2$ be an integer. A \emph{marked metric graph} for $F_r$ is a triple $(\Gamma, m, \ell)$ which satisfies:
\begin{itemize}
\item $\Gamma$ is a finite 1-dimensional CW complex, with the 0-cells \emph{vertices} and the 1-cells \emph{edges}.
\item For each vertex $v$, $deg(v) \geq 3$.
\item Each open 1-cell $e$ of $\Gamma$ is given a positive \emph{length} $L(e)>0$, and $\Gamma$ is endowed with a metric $\ell$ such that for each open 1-cell $e$ of $\Gamma$ there is a locally isometric bijection between $e$ and the interval $(0,L(e))\subseteq \mathbb R$.
\item $m$ is a homotopy equivalence $m \from R_r \to \Gamma$, which we call a \emph{marking}.
\end{itemize}
\end{df}

\begin{nt} We use the following notation.
\begin{enumerate}
\item We sometimes write $(\Gamma, m)$ for $(\Gamma, m, \ell)$ if the metric is otherwise clear or irrelevant.
\item Given a graph $\Gamma$ (metric or topological), we let $E(\Gamma)$ denote the set of oriented edges of $\Gamma$ and let $V(\Gamma)$ denote the vertex set of $\Gamma$.
\end{enumerate}
\end{nt}

\begin{df}[Change of marking \& marked graph equivalence]
Let $(\Gamma, m)$ and $(\Gamma', m')$ be marked graphs. Then a \emph{change of marking} is a continuous map $f \from \Gamma \to \Gamma'$ so that $m'$ is homotopic to $f \circ m$.
Two $F_r$-marked (metric) graphs  $(\Gamma, m) $ and $(\Gamma', m')$ are \emph{equivalent} if there exists an isometric change of marking $\varphi \colon \Gamma \to \Gamma'$.
\end{df}

\begin{df}[Unprojectivized Outer space]{\label{d:uos}}

The \emph{(rank-$r$) unprojectivized Outer space} $\uos$ is the space of equivalence classes of $F_r$-marked metric graphs. By abuse of notation, we usually still denote the equivalence class of $(\Gamma, m) $ by $(\Gamma, m)$, or of $(\Gamma, m, \ell)$ by $(\Gamma, m, \ell)$.

For a marked metric graph $(\Gamma, m)$ denote by $vol(\Gamma, m, \ell)$, or just $vol(\Gamma)$, the sum of the $\ell$-lengths of the 1-cells in $\Gamma$. Note that $vol(\Gamma, m, \ell)$ is preserved by the above equivalence relation, so that $vol(\Gamma, m)$ is well-defined for points of $\uos$.
\end{df}

\begin{df}[(Projectivized) Outer space]{\label{d:pos}}
Let $r\ge 2$ be an integer.
For each $r \geq 2$ the (\emph{rank-$r$}) \emph{Outer space} $\os$ is the set of $(\Gamma, m, \ell)\in\uos$ with $vol(\Gamma, m, \ell)=1$. There is a map from $q \from \uos \to \os$ normalizing the graph volume, i.e. if $(\Gamma, m, \ell)$ is a marked metric graph, then $q(\Gamma, m,\ell) = (\Gamma, m, \frac{1}{vol(\Gamma, m, \ell)} \ell)$.

Note that $\mathbb R_{>0}$ has a natural action on $\uos$ by multiplying the metric on $\Gamma$ by a positive real number. There is a canonical identification between $\os$ and the quotient set $\uos/\mathbb R_{>0}$ and we will usually not distinguish between these two sets.
\end{df}

\begin{df}[Simplicial structure on $CV_r$]\label{simplexDefn}
Let $\Gamma$ be a topological graph and $m \from R_r \to \Gamma$ a homotopy equivalence, so that $(\Gamma, m)$ is a marked graph.
We denote the \emph{simplex} $\sig$ in $\os$ corresponding to $(\Gamma, m)$ by
$$\sig_{(\Gamma, m)} := \{ (\Gamma,m,\ell) \in \os \}.$$
By enumerating $E(\Gamma)$, we can identify $\sig_{(G,\mu)}$ with the open simplex
$$\Sigma_{|E|} = \left\{ \overrightarrow{v} \in \RR_+^{|E|} ~ \left| ~ \sum_{i=1}^{|E|} v_i = 1 \right. \right\}.$$
\end{df}

\begin{df}[$CV^{(k)}_r$]
We let $CV^{(k)}_r$ denote the $k$-skeleton of $\os$.
\end{df}

\begin{df}[Simplicial metric]
Given an open simplex $\sig_{(\Gamma, m)}$ in $\os$, the \emph{simplicial metric} on $\sig_{(\Gamma, m)}$ is the Eucliden metric on $\Sigma_{|E|}$.
We also denote by $d_{simp}$ the extension of this metric to a path metric on $\os$.
(There is another (asymmetric) metric on Outer space see Definition \ref{dfLipMetric}).
\end{df}

\begin{df}[Topology on $\uos$]
We call the full preimage under $q$ (see Definition \ref{d:uos}) of a simplex in $\os$ an \emph{unprojectivized simplex} in $\uos$. The unprojectivized Outer space $\uos$ is topologized by giving it the the structure of an ideal simplicial complex built from (unprojectivized) open simplices (see \cite{v02} for details). Faces of $\sig_{(\Gamma, m)}$ arise by letting the edges of a tree in $\Gamma$ have length 0. The projectivized outer space $\os\subseteq\uos$ is given the subspace topology, and can also be thought of as an ideal simplicial complex built from open simplices. The subspace topology on $\os$ coincides with the quotient topology on $\uos/\mathbb R_{>0}$.
\end{df}

\subsection{Train track maps \& gate structures}{\label{ss:TTs}}

\begin{df}[Graph maps \& train track maps]\label{d:Maps}
We call a continuous map of graphs $g \colon \Gamma \to \Gamma'$ a \emph{graph map} if it takes vertices to vertices and is locally injective on the interior of each edge. A self graph map $g \from \Gamma \to \Gamma$ is a \emph{train track map} if $g$ is a homotopy equivalence and if for each $k\ge 1$ the map $g^k$ is locally injective on edge interiors.

We call the train track map $g$ \emph{expanding} if for each edge $e \in E(\Gamma)$ we have that $|g^n(e)|\to\infty$ as $n\to\infty$, where for a path $\gamma$ we use $|\gamma|$ to denote the number of edges $\gamma$ traverses (with multiplicity).
\end{df}

\begin{df}[Directions]\label{d:Directions} For each $x\in \Gamma$ we let $\mathcal{D}(x)$ denote the set of \emph{directions} at $x$, i.e. germs of initial segments of edges emanating from $x$. For each edge $e \in E(\Gamma)$, we let $D(e)$ denote the initial direction of $e$. For an edge-path $\gamma=e_1 \dots e_k$, we let $D \gamma = D(e_1)$. Let $g \from \Gamma \to \Gamma'$ be a graph map. We denote by \emph{$Dg$} the map of directions induced by $g$, i.e. $Dg(d)=D(g(e))$ for $d=D(e)$. For a self-map, i.e. one where $\Gamma =\Gamma'$, a direction $d$ is \emph{periodic} if $Dg^k(d)=d$ for some $k>0$ and \emph{fixed} when $k=1$.
\end{df}

\begin{df}[Turns \& gates]\label{d:GateStructures} Let $g \colon \Gamma \to \Gamma'$ be a graph map. We call an unordered pair of directions $\{d_i, d_j\}$ a \emph{turn}, and a \emph{degenerate turn} if $d_i = d_j$.
We denote by $Tg$ the map induced by $Dg$ on the turns of $\Gamma$.
A turn $\tau$ is called g-\emph{prenull} if $Tg(\tau)$ is degenerate.
When $g\from \Gamma \to \Gamma$ is a self-map, the turn $\tau$ is called an \emph{illegal turn} for $g$ if $Tg^k(\tau)$ is degenerate for some $k$ and a \emph{legal turn} otherwise. We call a $g$ \emph{transparent} if each illegal turn is prenull. Notice that every graph self-map has a transparent power.

Considering the directions of an illegal turn equivalent, one can define an equivalence relation on the set of directions at a vertex. We call the equivalence classes \emph{gates} and call the partitioning of the directions at each vertex into gates the \emph{induced gate structure}.

For a path $\gamma=e_1e_2 \dots e_{k-1}e_k$ in $\Gamma$ where $e_1$ and $e_k$ may be partial edges, we say $\gamma$ \emph{takes} $\{\overline{e_i}, e_{i+1}\}$ for each $1 \leq i < k$. For both edges and paths we more generally use an ``overline'' to denote a reversal of orientation. Given a graph map $g \from \Gamma \to \Gamma'$, we say that a turn $T$ in $\Gamma'$ is \emph{$g$-taken} if there exists an edge $e$ so that $g(e)$ takes $T$. A path $\gamma$ is \emph{legal} with respect to a train track structure on $\Gamma$ if $\gamma$ only takes turns that are legal in this train track structure.
\end{df}

\begin{df}[Irreducible \& fully irreducible] We call a train track map \emph{irreducible} if it has no proper invariant subgraph with a noncontractible component.

An outer automorphism $\vphi\in\out$ is \emph{fully irreducible} if no positive power preserves the conjugacy class of a proper free factor of $F_r$. Bestvina and Handel \cite{bh92} proved that every (fully) irreducible outer automorphism admits an irreducible train track representatives.
\end{df}

\begin{df}[Transition matrix, Perron-Frobenius matrix, Perron-Frobenius eigenvalue] The \emph{transition matrix} of a train track map $g \from \Gamma \to \Gamma$ is the square $|E(\Gamma)| \times |E(\Gamma)|$ matrix $(a_{ij})$ such that $a_{ij}$, for each $i$ and $j$, is the number of times $g(e_i)$ passes over $e_j$ in either direction.
A transition matrix $A=[a_{ij}]$ is \emph{Perron-Frobenius (PF)} if there exists an $N$ such that, for all $k \geq N$, $A^k$ is strictly positive.
By Perron-Frobenius theory, we know that each such matrix has a unique eigenvalue of maximal modulus and that this eigenvalue is real.
This eigenvalue is called the \emph{Perron-Frobenius (PF) eigenvalue} of $A$.
\end{df}

\subsection{Fold lines}{\label{ss:FoldLines}}

\begin{df}[Fold lines]{\label{d:FoldLines}}
A \emph{fold line} in $\uos$ is a continuous, injective, proper function $\mathbb{R} \to \uos$ defined by a continuous 1-parameter family of marked graphs $t \to \Gamma_t$ and a family of differences of markings $\hat h_{st} \colon \hat\Gamma_t \to \hat\Gamma_s$ defined for $t \leq s \in \mathbb{R}$, satisfying:
\begin{enumerate}
\item  $\hat h_{ts}$ is a local isometry on each edge for all $s \leq t \in \mathbb{R}$.
\item $\hat h_{us} \circ \hat h_{st} = \hat h_{ut}$ for all $t \leq s \leq u \in \mathbb{R}$ and $\hat h_{ss} \colon \Gamma_s \to \Gamma_s$ is the identity for all $s \in \mathbb{R}$.
\end{enumerate}
A fold line in $\os$ is the $q$-image (where $q$ is the normalizing map, see Definition \ref{d:pos}) of a fold line. We shall denote $q(\hat \Gamma_t)$ and $q\circ \hat h_{s,t}$ by $\Gamma_t$ and $h_{s,t}$ respectively.
\end{df}

\begin{df}[Simple fold lines]{\label{d:SimpleFoldLines}}
A fold line in Outer space $\mathbb{R} \to \os$ is said to be \emph{simple} if there exists a subdivision of $\mathbb R$ by points $(t_i)_{i\in \mathbb Z}$
\[
  \dots t_{i-1}< t_i <t_{i+1} \dots
\]
such that $\lim_{i\to\infty} {t_i}=\infty$, $\lim_{i\to-\infty} t_i=-\infty$ and such that the following holds:

For each $i\in \mathbb Z$ there exist distinct edges $e,e'$ in $\Gamma_{t_i}$, with a common initial vertex, such that: For each $s\in (t_i,t_{i+1}]$ the map $h_{s t_i}\colon \Gamma_{t_i}\to \Gamma_s$ identifies an initial segment of $e$ with an initial segment of $e'$, with no other identifications (that is, $h_{s t_i}$ is injective on the complement of those two initial segments in $\Gamma_{t_i}$).
\end{df}

\begin{rk}[Simple fold lines]{\label{r:SimpleFoldLines}}
All fold lines that we consider in this paper will be simple.
\end{rk}

\begin{df}[Stallings folds]{\label{d:Folds}}
Stallings introduced ``folds'' in \cite{s83}. Let $g \colon \Gamma \to \Gamma'$ be a homotopy equivalence of marked graphs. Let $e_1' \subset e_1$ and $e_2' \subset e_2$ be maximal, initial, nontrivial subsegments of edges $e_1$ and $e_2$ emanating from a common vertex and satisfying that $g(e_1')=g(e_2')$ as edge paths and that the terminal endpoints of $e_1'$ and $e_2'$ are distinct points in $g^{-1}(\mathcal{V}(\Gamma))$. Redefine $\Gamma$ to have vertices at the endpoints of $e_1'$ and $e_2'$ if necessary. One can obtain a graph $\Gamma_1$ by identifying the points of $e_1'$ and $e_2'$ that have the same image under $g$, a process we will call \emph{folding}.

Let $\mF$ be a fold of $e_1$ and $e_2$. We call $\mF$ a \emph{full fold} if the entirety of $e_1$ and $e_2$ are identified. We call $\mF$ a \emph{proper full fold} if only an initial subsegment of one of $e_1$ or $e_2$ is folded with the entirety of the other. We call $\mF$ a \emph{partial fold} if neither $e_1$ nor $e_2$ is entirely folded.
\end{df}

\begin{df}[Stallings fold decomposition]{\label{d:StallingsFoldDecomposition}}
Stallings \cite{s83} also showed that if $g \colon \Gamma \to \Gamma'$ is a homotopy equivalence graph map, then $g$ factors as a composition of folds and a final homeomorphism. We call such a decomposition a \emph{Stallings fold decomposition}. It can be obtained as follows: At an illegal turn for $g\colon \Gamma  \to \Gamma'$, one can fold two maximal initial segments having the same image in $\Gamma'$ to obtain a map $\mathfrak{g}_1 \colon \Gamma_1 \to \Gamma'$ of the quotient graph $\Gamma_1$. The process can be repeated for $\mathfrak{g}_1$ and recursively. If some $\mathfrak{g}_k \colon \Gamma_{k-1} \to \Gamma$ has no illegal turn, then $\mathfrak{g}_k$ will be a homeomorphism and the fold sequence is complete.
\end{df}

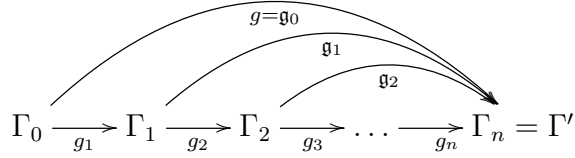
\begin{figure}[ht!]
\[
\xymatrix{\Gamma_0 \ar[r]_{g_1} \ar@/^4pc/[rrrr]_{g=\mathfrak{g}_0} & \Gamma_1 \ar[r]_{g_2} \ar@/^3pc/[rrr]_{\mathfrak{g}_1} & \Gamma_2 \ar[r]_{g_3} \ar@/^2pc/[rr]_{\mathfrak{g}_2} & \dots \ar[r]_{g_n}  & \Gamma_n=\Gamma' \\}
\]
\caption{Constructing a Stallings folds decomposition}
\end{figure}

Notice that choices of illegal turns are made in this process and that different choices lead to different Stallings fold decompositions of the same homotopy equivalence.

When $\Gamma$ is a marked metric graph (of volume 1), we obtain an induced metric on each $\Gamma_k$, which we may renormalize to be again of volume 1.

In \cite{s89}, Skora interpreted a Stallings fold decomposition for a graph map  homotopy equivalence $g\colon \Gamma \to \Gamma'$ as a sequence of folds performed continuously. Let $g\colon \Gamma \to \Gamma$ be an irreducible train track map representing an outer automorphism $\vphi \in Out(F_r)$ and let $\lambda$ be its Perron-Frobenius eigenvalue. Repeating a Stallings fold decomposition of $g$ defines a periodic fold line in Outer space. The discretization of this fold line is depicted in Equation \ref{E:PeriodicFoldLines} below, where it should be noted that $\Gamma_{nK}=\frac{1}{\lambda^n}\Gamma_0 \cdot \varphi^n$, for each integer $n$.

\begin{equation}\label{E:PeriodicFoldLines}
\dots \xrightarrow{} \Gamma_0 \xrightarrow{g_1} \Gamma_1 \xrightarrow{g_2} \cdots \xrightarrow{g_K} \Gamma_K \xrightarrow{g_{K+1}} \Gamma_{K+1} \xrightarrow{g_{K+2}} \cdots \xrightarrow{g_{2K}} \Gamma_{2K} \xrightarrow{g_{2K+1}} \dots
\end{equation}

\begin{df}[Periodic fold lines]\label{d:PeriodicFoldLine}
Let $g\colon \Gamma \to \Gamma$ be an expanding irreducible train track map representing an outer automorphism $\vphi \in Out(F_r)$ and let $\lambda>1$ be its Perron-Frobenius eigenvalue. If $g_1, \dots, g_k$ is a Stallings fold sequence for $g$, the process of Skora defines a path $\mL_0 \from [0,\log\lam] \to \os$ so that the union of $\varphi^k$-translates of $\mL_0$ for all $k$ gives the entire fold line $\mL$ determined by $g$, see Definition \ref{d:StallingsFoldDecomposition}. That is, $\mL \from \RR \to \os$ is defined by $\mL(t) = \mL_0(t - \lfloor \frac{t}{\log\lam}\rfloor) \varphi^{\lfloor \frac{t}{\log\lam}\rfloor}$.
$\mL$ is called a \emph{periodic fold line} for $\vphi$ or, if $\vphi$ is fully irreducible, an \emph{axis} for $\vphi$.
\end{df}

\subsection{Geodesics in Outer space}{\label{ss:GOS}}

\begin{df}[Lipschitz metric]\label{dfLipMetric}
Given an ordered pair of points $(X,Y)$ in the Outer space $\os$, the \emph{Lipschitz distance} $d(X,Y)$ from $X=(\Gamma_X, m_X, \ell_X)$ to $Y=(\Gamma_Y, m_Y, \ell_Y)$ is defined as the logarithm of the minimal Lipschitz constant of a Lipschitz difference of markings. (It is known~\cite{fm11} this minimum is in fact realized and that $d(X,Y)\ge 0$, with $d(X,Y)=0$ if and only if $X=Y$ in $\os$.) We sometimes also denote $d(X,Y)$ by $d_L(X,Y)$.
\end{df}

Let $\al$ be an element of $F_r$. We also denote by $\al$ the corresponding loop in the base rose $R_r$. Let $X=(\Gamma,m)$ be a point in Outer space.
Denote by $\al_X$ the immersed loop (unique up to cyclic reparametrization) in $X$ that is freely homotopic to $m(\al)$.

\begin{df}[Witness]
It is proved in \cite{fm11} that for each ordered pair of points $(X,Y)$ in Outer space there exists an element $\al$ of $F_r$ so that $\log \frac{len(\al_Y)}{len(\al_X)} = d(X,Y)$.
We call each such $\al$ a \emph{witness} of $d(X,Y)$ or of the change of marking from $X$ to $Y$.
\end{df}

\begin{df}[Candidate]
Let $X \in \os$. A loop in $X$ whose image is an embedded circle, an embedded figure-8, or an embedded barbell is called a \emph{candidate} of $X$.
\end{df}

\begin{lem}\cite{fm11}{\label{l:candidatewitnesses}}
For each ordered pair of points $(X,Y)$ in Outer space there exists a candidate loop in $X$ that is a witness of $d(X,Y)$.
\end{lem}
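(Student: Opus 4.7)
The plan is to exploit the existence of an optimal Lipschitz difference of markings $f\colon X\to Y$ realizing the Lipschitz constant $\Lambda=e^{d(X,Y)}$, together with the standard tension subgraph / gate structure machinery. First I would set $\Delta\subseteq X$ to be the tension subgraph, i.e. the union of edges on which $f$ has local Lipschitz constant exactly $\Lambda$, and equip it with the gate structure induced by $f$. The key reformulation is that an immersed loop $\alpha_X$ in $X$ is a witness of $d(X,Y)$ if and only if $\alpha_X\subseteq \Delta$ and $\alpha_X$ is legal with respect to this gate structure: in that case $f$ maps $\alpha_X$ to an immersed loop in $Y$ with no backtracking, which stretches the length by exactly $\Lambda$; any failure of either property strictly decreases the stretching ratio.

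Next I would show that such a legal loop exists inside $\Delta$. By optimality of $f$, at each vertex $v\in\Delta$ there must be at least two $f$-gates among the directions of $\Delta$ at $v$; otherwise one could locally redirect the $\Lambda$-stretched edges to produce a less-stretched difference of markings, contradicting $\Lambda = e^{d(X,Y)}$. Consequently one can extend any legal germ to an arbitrarily long legal edge path in $\Delta$, and since $\Delta$ is finite, some direction must recur, producing a legal closed loop. Among all such witness loops I would choose one, say $\alpha$, that is minimal with respect to combinatorial length (number of edge traversals with multiplicity).

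Finally I would analyze the topological type of the image $Y_\alpha\subseteq \Delta$ of $\alpha$. If a vertex $v$ were traversed by $\alpha$ more often than the candidate structure allows, the gate structure at $v$ would permit repairing the incoming and outgoing directions into two shorter closed legal subloops $\alpha',\alpha''$ whose concatenated lengths equal that of $\alpha$; since $\ell(f(\alpha'))+\ell(f(\alpha''))=\ell(f(\alpha))=\Lambda\ell(\alpha)$ and each ratio is at most $\Lambda$, both subloops are themselves witnesses, contradicting minimality. A rank-$1$ minimal image is forced to be an embedded circle traversed once. In rank $2$, the minimal image has no valence-$1$ vertex and must be a figure-$8$, theta, or barbell; a theta graph cannot support an immersed loop traversing all three edges (a parity/gate-count obstruction at each trivalent vertex), leaving a figure-$8$ or a barbell traversed in the standard candidate manner.

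I expect the main obstacle to be the combinatorial step of cutting a non-candidate witness loop into shorter witness subloops: one must simultaneously re-pair the incoming and outgoing directions at a repeated vertex so that the new turns are legal, control the resulting image, and verify that one of the pieces is still a witness. This is where the gate-structure analysis on $\Delta$ does the real work, and where one must use the fact that the minimal ratio $\Lambda$ is attained only by loops supported on $\Delta$ with legal turns.
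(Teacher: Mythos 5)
The paper does not prove this lemma; it cites it directly from Francaviglia--Martino~\cite{fm11}, so there is no in-paper argument to compare against. Your outline does reproduce the strategy of~\cite{fm11} (a variant of White's argument): pass to the tension subgraph $\Delta$ of an optimal map $f$, equip it with the gate structure induced by $f$, characterize witnesses as legal loops contained in $\Delta$, produce one such loop, and then analyze the image of a length-minimal one.

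Two of your steps need repair. First, the assertion that ``at each vertex of $\Delta$ there are at least two $f$-gates'' does not hold for an arbitrary optimal $f$. If some vertex of $\Delta$ has only one gate, the perturbation you describe strictly shrinks the tension subgraph (it need not immediately produce two gates everywhere), and one must then take a new optimal map and iterate; the process terminates because $X$ has only finitely many subgraphs. You wave at the perturbation but not at the iteration that actually yields the hypothesis you use downstream. Second, the claim that ``a theta graph cannot support an immersed loop traversing all three edges'' is simply false: if both trivalent vertices carry three gates, then $e_1\bar e_2 e_3\bar e_1 e_2\bar e_3$ is legal. What the argument needs, and what is true, is that any theta sitting inside $\Delta$ whose two branch vertices each have $\geq 2$ gates (among the theta's directions, which is guaranteed because the loop takes legal turns there) already contains a legal two-edge loop $e_i\bar e_j$: pick $e_i$ in a singleton gate at one branch vertex, then pick $j\ne i$ with $\bar e_j$ in a gate different from that of $\bar e_i$ at the other. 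That two-edge loop is an embedded circle in $\Delta$ and hence a shorter witness, so a theta can never be the image of a \emph{minimal} witness. It is minimality, not a parity or gate-count obstruction, that eliminates the theta. Finally, the re-pairing argument at a repeated vertex that bounds the rank of the image by $2$ is indeed the technical core of~\cite{fm11}; you correctly flag it as the main obstacle, but the proposal leaves it open. As written this is a correct high-level blueprint of the known proof, with one incorrect claim (the theta ``obstruction'') and one acknowledged but unfilled gap (the re-pairing/rank bound).
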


\begin{df}[Geodesic]
A map $\mL \from [0,\ell] \to \os$ is a \emph{Lipshitz geodesic} if
\begin{enumerate}
\item for all $s, t, r \in [0, \ell]$ so that $s \leq t \leq r$ we have
\[ d(\mL(s), \mL(r)) = d(\mL(s), \mL(t))+ d(\mL(t), \mL(r)) \text{ and} \]
\item there exists no $X_0 \in \os$ and no nontrivial subinterval $[a,b]$ of $[0, \ell]$ so that $\mL(t) = X_0$ for all $t \in [a,b]$.
\end{enumerate}
\end{df}

\begin{lem}{\label{l:WitnessLoops}}
Let $\mL = \{ \Gamma_t \}_{t=0}^{\infty}$ be a Lipschitz geodesic ray. Then there exists an element $\al \in F_r$ so that, for each $t \geq 0$, if $\al_t$ denotes the immersed loop representing the conjugacy class of $\al$ in $\Gamma_t$, then $\al_t$ is a witness to $d(\Gamma_t, \Gamma_s)$ for each $t \leq s$.
\end{lem}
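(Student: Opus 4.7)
The plan is to combine Lemma \ref{l:candidatewitnesses} (existence of candidate witnesses) with the additivity of the Lipschitz distance along a geodesic, using a finiteness/pigeonhole argument on the set of candidate loops in the starting graph $\Gamma_0$.

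First, I would observe the following general fact: if $\alpha \in F_r$ is any conjugacy class and $0 \le t \le s$, then the stretching factors multiply, so
\[
\log \frac{\operatorname{len}(\alpha_s)}{\operatorname{len}(\alpha_0)} \;=\; \log \frac{\operatorname{len}(\alpha_t)}{\operatorname{len}(\alpha_0)} + \log \frac{\operatorname{len}(\alpha_s)}{\operatorname{len}(\alpha_t)} ,
\]
while each summand is bounded above by the corresponding Lipschitz distance ($d(\Gamma_0,\Gamma_t)$ and $d(\Gamma_t,\Gamma_s)$ respectively). Since $\mL$ is a Lipschitz geodesic, $d(\Gamma_0,\Gamma_s) = d(\Gamma_0,\Gamma_t) + d(\Gamma_t,\Gamma_s)$. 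Therefore, whenever $\alpha$ happens to be a witness of $d(\Gamma_0,\Gamma_s)$, equality must hold in each of the two intermediate inequalities as well, so $\alpha$ is simultaneously a witness of $d(\Gamma_0,\Gamma_t)$ and of $d(\Gamma_t,\Gamma_s)$. This reduces the lemma to producing a \emph{single} $\alpha$ that witnesses $d(\Gamma_0,\Gamma_s)$ for arbitrarily large $s$.

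Next I would apply Lemma \ref{l:candidatewitnesses}: for each integer $n\ge 1$, pick a candidate loop $\alpha^{(n)}$ in $\Gamma_0$ which witnesses $d(\Gamma_0,\Gamma_n)$. The key finiteness input is that the underlying topological graph $\Gamma_0$ has only finitely many embedded circles, embedded figure-8 subgraphs, and embedded barbell subgraphs, and each such subgraph supports only finitely many loops up to cyclic reparametrization and orientation. Hence the set of candidates of $\Gamma_0$ is finite. By the pigeonhole principle there exists a candidate $\alpha$ (a fixed conjugacy class in $F_r$) and an unbounded sequence $n_k \to \infty$ with $\alpha^{(n_k)} = \alpha$, so $\alpha$ is a witness of $d(\Gamma_0,\Gamma_{n_k})$ for every $k$.

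Finally, I would conclude the lemma as follows. Given any $0 \le t \le s$, pick $n_k$ with $n_k \ge s$, and write the geodesic decomposition
\[
d(\Gamma_0,\Gamma_{n_k}) \;=\; d(\Gamma_0,\Gamma_t) + d(\Gamma_t,\Gamma_s) + d(\Gamma_s,\Gamma_{n_k}).
\]
The analogous decomposition of $\log(\operatorname{len}(\alpha_{n_k})/\operatorname{len}(\alpha_0))$ into three summands is a term-by-term lower bound, and the totals agree because $\alpha$ witnesses $d(\Gamma_0,\Gamma_{n_k})$; thus each intermediate summand matches its corresponding Lipschitz distance. In particular $\alpha_t$ witnesses $d(\Gamma_t,\Gamma_s)$, proving the claim. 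There is no real obstacle here; the only small subtlety I would double-check carefully is that the candidate set of a fixed finite graph is genuinely finite, so that the pigeonhole step is legitimate, and that the additivity argument survives the normalization map $q$ in the passage from the unprojectivized lengths to $\os$ (it does, since both $\operatorname{len}(\alpha_t)$ and the Lipschitz distance are invariant under the $\mathbb{R}_{>0}$-rescaling when read off the ratios).
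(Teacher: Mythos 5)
Your proposal is correct and follows essentially the same strategy as the paper's proof: use additivity of the Lipschitz distance along the geodesic to show any witness of $d(\Gamma_0,\Gamma_s)$ also witnesses the intermediate distances, invoke finiteness of candidate loops in $\Gamma_0$ via Lemma~\ref{l:candidatewitnesses}, and then extract a single candidate that works for all times. The only (cosmetic) difference is the finiteness step: you apply pigeonhole to a chosen sequence of candidate witnesses $\alpha^{(n)}$ to extract a subsequence, whereas the paper works with the sets $\Theta_s$ of all candidate witnesses of $d(\Gamma_0,\Gamma_s)$, observes these form a decreasing chain of nonempty finite sets (using the same additivity argument), and takes a point in the stabilized intersection; both yield the same conclusion.
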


\begin{proof}
For each $0 \leq t \leq s$ we have $d(\Gamma_0, \Gamma_s) =d(\Gamma_0,\Gamma_t) + d(\Gamma_t,\Gamma_s)$. Let $f_{s,0}, f_{t,0} , f_{s,t}$ be optimal maps, i.e. maps that realize the equality in Definition \ref{dfLipMetric}, and let $\al$ be a witness loop for $d(\Gamma_0, \Gamma_s)$. Then $ \frac{len(\al_s)}{len(\al_0)} = Lip f_{s,0}  = Lip f_{s,t} Lip f_{t,0} \geq \frac{len(\al_s)}{len(\al_t)} \frac{len(\al_t)}{len(\al_0)} $. Thus, all of the inequalities are in fact equalities i.e. $Lip f_{s,t} =\frac{len(\al_s)}{len(\al_t)}$ and $Lip f_{t,0} = \frac{len(\al_t)}{len(\al_0)}$. Hence, $\al_0$ is a witness for $f_{t,0}$ for all $t \leq s$ and $\al_t$ is a witness for $f_{s,t}$.
Moreover, by Lemma \ref{l:candidatewitnesses}, $\al_0$ can be chosen to be a candidate in $\Gamma_0$. Notice that there are only finitely many candidate loops in $\Gamma_0$. Let $\Theta_s$ be the set of candidate loops that are $f_{s,0}$ witnesses. Then the sequence $\{\Theta_s\}_{s=0}^{\infty}$ consists of a non-empty decreasing finite sets of loops, hence stabilizes as $s \to \infty$. We let
$$\Theta_\infty :=\bigcap_{s=0}^{\infty} \Theta_s.$$
Any $\al_0 \in \Theta_\infty$ is a witness for each $f_{s,0}$ with $0 \leq s$ and, by the discussion above, $\al_{t}$ is a witness of $f_{s,t}$ for each $0 \leq t \leq s$.
\end{proof}

\begin{lem}\label{l:stallingsgoedesics}
Let $g$ be an expanding train track representative of $\vphi \in \out$ and $\mL$ the periodic fold line determined by $g$ as in Definition \ref{d:PeriodicFoldLine}. Then $\mL$ is a Lipschitz geodesic.
\end{lem}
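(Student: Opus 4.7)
The plan is to establish the single distance identity $d_L(\mL(s), \mL(t)) = t - s$ for all $s \le t$ in $\RR$, from which both the additivity and the non-constancy conditions in the definition of a Lipschitz geodesic follow immediately.

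For the upper bound, equip the unprojectivized $\hat\Gamma_0$ with the Perron-Frobenius eigenvector metric for $g$, so that $g$ stretches every edge by exactly $\lam$. Parametrize Skora's continuous folding process so that $vol(\hat\Gamma_t) = vol(\hat\Gamma_0) \cdot e^{-t}$ on $[0,\log\lam]$ (consistent at the endpoints since folding through all of $g$ divides the total length by $\lam$), and extend periodically. Each map $\hat h_{ts} \from \hat\Gamma_s \to \hat\Gamma_t$ for $s \le t$ is a local isometry on every edge by Definition \ref{d:FoldLines}, so the projectivized map $h_{ts}$ has Lipschitz constant $vol(\hat\Gamma_s)/vol(\hat\Gamma_t) = e^{t-s}$ on each edge, yielding $d_L(\mL(s), \mL(t)) \le t - s$.

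For the lower bound, I exhibit a single element $\al \in F_r$ that serves as a witness simultaneously for every pair $(\mL(s), \mL(t))$. Since $g$ is expanding and irreducible, $\Gamma_0$ admits an immersed legal closed loop $\al$; for instance, take an edge $e$ based at a periodic vertex of $g$, form the legal path $g^n(e)$ for large $n$, and use Perron-Frobenius strong connectedness of the transition matrix to close it up legally through one further iteration. The claim is that
\[ \ell_{\mL(t)}(\al) = e^t \, \ell_{\mL(0)}(\al) \quad \text{for every } t \in \RR. \]
Within one period $t \in [0,\log\lam]$, the legality of $\al$ means no Stallings fold identifies any portion of the immersed representative of $\al$ (folds only identify initial segments on either side of an illegal turn), so the unprojectivized length of $\al$ in $\hat\Gamma_t$ is constant in $t$; dividing by $vol(\hat\Gamma_t)$ supplies the $e^t$ factor. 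For $t = t' + n\log\lam$ with $t' \in [0,\log\lam]$, the equivariance $\mL(t) = \mL(t')\cdot\vphi^n$ gives $\ell_{\mL(t)}(\al) = \ell_{\mL(t')}(\vphi^n(\al))$; since $g^n$ is itself an expanding irreducible train track map with PF eigenvalue $\lam^n$, the loop $g^n(\al)$ representing $\vphi^n(\al)$ in $\Gamma_0$ is immersed and legal of length $\lam^n \ell_{\Gamma_0}(\al)$, and applying the within-period identity to $g^n(\al)$ closes the induction. Consequently $\ell_{\mL(t)}(\al) / \ell_{\mL(s)}(\al) = e^{t-s}$, so $d_L(\mL(s),\mL(t)) \ge t - s$.

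Combining the two bounds yields $d_L(\mL(s), \mL(t)) = t - s$, from which the additivity $(t-s) + (r-t) = r - s$ is immediate and strict positivity on any nondegenerate subinterval supplies the non-constancy condition. I expect the one nontrivial ingredient to be the production of the legal witness loop $\al$ together with the verification that its legality is preserved under both $g$-iteration and the Skora folding process; the remainder is Perron-Frobenius scaling bookkeeping combined with the basic observation that Stallings folds only act at illegal turns and therefore leave every legal subpath untouched.
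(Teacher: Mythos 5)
Your proposal is correct and follows essentially the same route as the paper's proof: both rest on producing a $g$-legal loop $\alpha$ that serves as a common witness along the entire line, so that its length scales exactly like $e^{t-s}$ and the additive identity for $d_L$ drops out. You spell out two points the paper leaves implicit — the explicit construction of the legal witness loop and the Lipschitz upper bound via the PF-eigenvector metric — but the substance is identical.
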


\begin{proof}
For each interval $[t,s] \subset \RR$, let $\mL_{t,s}$ denote the restriction of $\mL$ to $[t,s]$. It suffices to show $\mL_{t,s}$ is a geodesic segment for each pair $t,s$ of positive integer multiples of $\log\lam$. For each such $t,s$ the change of marking map from $\mL(t)$ to $\mL(s)$ is $g^k$, where $k = \frac{s-t}{\log\lam}$.
Given $\al \in F_r$, we denote by $\al_t$ the immersed loop in $\mL(t)$ representing $\al$. Let $\beta \in F_r$ satisfy that $\beta_t$ is $g$-legal. Then $h_{u,t}(\beta_t) = \beta_u$ is immersed for each $t \leq u \leq s$, thus it is a witness for $d(\mL(t), \mL(u))$. Moreover, $\beta_u$ is still not $h_{s,u}$-prenull for each $t \leq u \leq s$. Therefore, $\beta_u$ is a witness for $d(\mL(u), \mL(s))$ for all $u \leq s$. Taking the logarithm of both sides of $\frac{\ell_s(\beta_s)}{\ell_t(\beta_t)} = \frac{\ell_u(\beta_u)}{\ell_t(\beta_t)}\frac{\ell_s(\beta_s)}{\ell_u(\beta_u)}$, we see that $\mL$ is a geodesic.
\end{proof}

\subsection{Nielsen paths \& principal points}{\label{ss:NPs}}

\begin{df}[Nielsen paths] Let $g \colon \Gamma \to \Gamma$ be an expanding irreducible train track map. Bestvina and Handel \cite{bh92} define a nontrivial immersed path $\rho$ in $\Gamma$ to be a \emph{periodic Nielsen path (PNP)} if, for some power $R \geq 1$, we have $g^R(\rho) \cong \rho$ rel endpoints (and just a \emph{Nielsen path (NP)} if $R=1$). An NP $\rho$ is called \emph{indivisible} (hence is an ``iNP'') if it cannot be written as $\rho = \gamma_1\gamma_2$, where $\gamma_1$ and $\gamma_2$ are themselves NPs.
\end{df}

\begin{df}[Ageometric]{\label{d:ageometric}} A fully irreducible outer automorphism is called \emph{ageometric} if it has a train track representative with no PNPs.
\end{df}

Bestvina and Handel describe in \cite[Lemma 3.4]{bh92} the structure of iNPs:

\begin{lem}[\cite{bh92}]{\label{l:iNP}}
Let $g \colon \Gamma\to\Gamma$ be an expanding irreducible train track map and $\rho$ an iNP for $g$. Then $\rho=\bar \rho_1\rho_2$, where $\rho_1$ and $\rho_2$ are nontrivial legal paths originating at a common vertex $v$ and such that the turn at $v$ between $\rho_1$ and $\rho_2$ is a nondegenerate illegal turn for $g$.
\end{lem}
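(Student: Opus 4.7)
The plan is to analyze how $g$ (equivalently, the power $g^R$ fixing $\rho$) acts on the decomposition of $\rho$ at its illegal turns into maximal legal subpaths, and to show that there is exactly one such illegal turn, which must occur at an interior vertex of $\rho$.

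First, I would check that $\rho$ must contain at least one illegal turn. Because $g$ is an expanding train track map, for any legal edge-path $\alpha$ in $\Gam$ and any $k\ge 1$ the path $g^k(\alpha)$ is immersed (no backtracking) and the combinatorial length $|g^k(\alpha)|$ tends to infinity. Hence, if $\rho$ were legal, then $g^R(\rho)\simeq\rho$ rel endpoints would hold without any tightening, forcing $g^R(\rho)=\rho$ as edge-paths, which is impossible since $|g^R(\rho)|>|\rho|$ for $R$ large.

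Next, decompose $\rho=\sig_1\sig_2\cdots\sig_n$ into maximal legal subpaths; the turn at each internal vertex $v_i$ between $\sig_i$ and $\sig_{i+1}$ is then nondegenerate and illegal for $g$, and $n\ge 2$ by the previous paragraph. The heart of the matter is to rule out $n\ge 3$. Each $g^R(\sig_i)$ is immersed, and the equivalence $g^R(\rho)\simeq\rho$ rel endpoints is realized by tightening the concatenation $g^R(\sig_1)g^R(\sig_2)\cdots g^R(\sig_n)$; since $\rho$ is a PNP, the tightened path equals $\rho$ on the nose. All of the expansion of the legal pieces $\sig_i$ must therefore be absorbed by cancellations at the illegal turns of the concatenation. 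Suppose $n\ge 3$. A length-accounting argument, comparing total expansion of the $\sig_i$ to the total cancellation at the illegal turns, forces the expansion-versus-cancellation balance to hold on each side of some $v_i$ separately, producing two shorter PNPs $\rho'=\sig_1\cdots\sig_i$ and $\rho''=\sig_{i+1}\cdots\sig_n$ (after replacing $R$ by a common iterate if needed to guarantee $g^R$ fixes each piece). This contradicts the indivisibility of $\rho$, so $n=2$.

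Finally, with $n=2$, let $v=v_1$ be the vertex of the unique illegal turn of $\rho$. Setting $\rho_1:=\ol{\sig_1}$ and $\rho_2:=\sig_2$ gives $\rho=\bar\rho_1\rho_2$ with both $\rho_i$ legal and originating at $v$, and the turn at $v$ between them is the nondegenerate illegal turn between $\sig_1$ and $\sig_2$. Nontriviality of each $\rho_i$ is automatic: if, say, $\rho_1$ were trivial, then $\rho=\rho_2$ would be legal, contradicting the first paragraph. The main obstacle in this plan is the accounting argument in the third paragraph: one must make precise how cancellation in each $g^R(\sig_i)g^R(\sig_{i+1})$ is localized near $v_i$ so that, when $n\ge 3$, the balance of expansion and cancellation splits $\rho$ cleanly into two shorter PNPs. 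This is the standard Bestvina–Handel tightening analysis for train track maps, and while it is technical, the combinatorial bookkeeping is the only nontrivial ingredient.
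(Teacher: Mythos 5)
The paper does not prove this lemma; it simply cites Bestvina--Handel \cite[Lemma 3.4]{bh92}, so there is no ``paper's own proof'' to compare against. Evaluating your sketch on its own merits: steps~1 and~3 are fine, but step~2 --- the claim that if $\rho=\sigma_1\cdots\sigma_n$ with $n\ge3$ then some $v_i$ splits $\rho$ into two shorter Nielsen paths --- is wrong as stated, and the ``length-accounting argument'' does not repair it.

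Here is the concrete problem. Work with the eigenmetric (PF-weighted lengths), so legal paths stretch by exactly $\lambda$. Because $g_\#(\rho)=\rho$ and tightening is local, the surviving core of $g(\sigma_i)$ must be exactly $\sigma_i$ (otherwise the number of illegal turns of $g_\#(\rho)$ would drop below $n-1$), which forces
\[
g(\sigma_1)=\sigma_1\tau_1,\qquad g(\sigma_i)=\bar\tau_{i-1}\,\sigma_i\,\tau_i\ \ (1<i<n),\qquad g(\sigma_n)=\bar\tau_{n-1}\,\sigma_n,
\]
where $\tau_j$ is the (nontrivial) path cancelled at $v_j$. Now try your proposed split at $v_i$: $g(\sigma_1\cdots\sigma_i)=\sigma_1\tau_1\cdot\bar\tau_1\sigma_2\tau_2\cdots\bar\tau_{i-1}\sigma_i\tau_i$, which tightens to $\sigma_1\cdots\sigma_i\,\tau_i$. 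This is strictly longer than $\sigma_1\cdots\sigma_i$, so the left piece is \emph{not} a Nielsen path, and likewise for the right piece. No vertex $v_i$ works, so the ``balance must hold on each side of some $v_i$'' heuristic fails.

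The correct split is one step away: take any interior piece $\sigma_i$ ($1<i<n$), identify it with $[0,\ell_i]$, and note that on $\sigma_i$ the map $g$ acts affinely by $x\mapsto\lambda x-c_{i-1}$ (since it prepends $\bar\tau_{i-1}$ of length $c_{i-1}$). This has a unique fixed point $p=c_{i-1}/(\lambda-1)$, which lies in the open interval $(0,\ell_i)$ because both $c_{i-1}>0$ and $c_i>0$ (if either vanished, the corresponding piece of $g_\#(\rho)$ would be strictly longer than the matching $\sigma_j$, contradicting $g_\#(\rho)=\rho$). Splitting $\rho$ at $p$ --- a point generally in the interior of an edge, not at a vertex --- gives $\gamma_1=\sigma_1\cdots\sigma_{i-1}\sigma_i|_{[0,p]}$ and $\gamma_2=\sigma_i|_{[p,\ell_i]}\sigma_{i+1}\cdots\sigma_n$, and one checks directly (the cancellations now telescope completely on each side) that $g_\#(\gamma_1)=\gamma_1$ and $g_\#(\gamma_2)=\gamma_2$. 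Both are nontrivial, so $\rho$ is divisible, contradicting indivisibility; hence $n=2$. Two further minor points: the lemma as stated concerns iNPs, i.e.\ $g_\#(\rho)=\rho$ with $R=1$, so the passages about ``$g^R$'' and ``replacing $R$ by a common iterate'' should just use $g$; and ``$|g^R(\rho)|>|\rho|$ for $R$ large'' should be phrased intrinsically via the eigenmetric or by iterating.
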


\begin{df}[Principal points] Given a train track map $g \colon  \Gamma \to \Gamma$, following \cite{hm11} we call a point \emph{principal} that is either the endpoint of a PNP or is a periodic vertex with $\geq 3$ periodic directions. Thus, in the absence of PNPs, a point is principal if and only if it is a periodic vertex with $\geq 3$ periodic directions
\end{df}

\begin{df}[Rotationless] An expanding irreducible train track map is called \emph{rotationless} if each principal point and periodic direction is fixed and each PNP is of period one. By \cite[Proposition 3.24]{fh11}, one then defines a fully irreducible $\vphi \in Out(F_r)$ to be \emph{rotationless} if some (equivalently, all) of its train track representatives is rotationless.
\end{df}

\subsection{Whitehead graphs}{\label{ss:wg}}

The following definitions are in \cite{hm11} and \cite{mp13}.

\begin{df}[Whitehead graphs \& indices]{\label{d:WhiteheadGraphs}}
Let $g \from \G \to \G$ be a train track map. The \emph{local Whitehead graph}
$LW(v;\Gamma)$ at a point $v \in \Gamma$ has a vertex for each direction at $v$ and an edge connecting the vertices corresponding to a pair of directions $\{d_1,d_2\}$ if the turn $\{d_1,d_2\}$ is $g^k$-taken for some $k\geq 0$.
The \emph{stable Whitehead graph} $SW(v;\Gamma)$ at a principal point $v$ is the subgraph of $LW(v;\Gamma)$ obtained by restricting to the periodic direction vertices.

Let $g \from \G \to \G$ be a PNP-free train track representative of a fully irreducible $\vphi \in Out(F_r)$. Then the \emph{ideal Whitehead graph $IW(\vphi)$ of $\vphi$} is isomorphic to the disjoint union $\bigsqcup \mathcal{SW}(g;v)$ taken over all principal vertices. Justification of this being an outer automorphism invariant can be found in \cite{hm11, p12a}.

Let $\vphi \in Out(F_r)$ be fully irreducible. For each component $C_i$ of $IW(\vphi)$, let $k_i$ denote the number of vertices of $C_i$. Then the \emph{index sum} is defined as
$i(\vphi) := \sum 1-\frac{k_i}{2}$. Since the index sum can be computed as such from the ideal Whitehead graph, we can define an index sum for an ideal Whitehead graph, or in fact any graph. For a graph $\mG$, we write the index sum as $i(\mG)$.
Writing the terms $1-\frac{k_i}{2}$ as a list, we obtain the \emph{index list} for $\vphi$.
\end{df}

\begin{rk} By \cite{gjll}, we know that all fully irreducible $\varphi \in Out(F_r)$ satisfy $0 > i(\varphi) \geq 1-r$.
An ageometric fully irreducible $\varphi \in Out(F_r)$ can be characterized by satisfying $0 > i(\varphi) > 1-r$. The definition we have given for an ideal Whitehead graph only works for ageometric fully irreducibles. However the index sum is always defined from the ideal Whitehead graph as in Definition \ref{d:WhiteheadGraphs} and general definitions of the ideal Whitehead graph can be found in \cite{p12a} or \cite{hm11}.
\end{rk}

A train track map $g$ induces a simplicial (hence continuous) map $Dg \from LW(g,v) \to LW(g,g(v))$ extending the map of vertices defined by the direction map $Dg$.
When $g$ is rotationless and $v$ a principal vertex, the map $Dg \from LW(g,v) \to LW(g,v)$ has image in $SW(g,v)$. Since $Dg$ acts as the identity on $SW(g,v)$, when viewed as a subgraph of $LW(g,v)$, this map is in fact a surjection $Dg \from LW(g,v) \to SW(g,v)$.

\subsection{Full irreducibility criterion.}

The following lemma is essentially \cite[Proposition 4.1]{IWGII}, with the added observation that a fully irreducible outer automorphism with a PNP-free train track representative is in fact ageometric (by definition). \cite{k14} has a related result.

\begin{prop}[\cite{IWGII}](\emph{The Ageometric Full Irreducibility Criterion (FIC)})\label{prop:FIC}
Let $g\colon \Gamma \to \Gamma$ be a PNP-free, irreducible train track representative of $\vphi \in Out(F_r)$. Suppose that the transition matrix for $g$ is Perron-Frobenius and that all the local Whitehead graphs are connected. Then $\vphi$ is an ageometric fully irreducible outer automorphism.
\end{prop}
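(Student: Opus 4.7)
The plan is to split the conclusion into two pieces: (i) $\vphi$ is fully irreducible, and (ii) $\vphi$ is ageometric. Piece (ii) is immediate from Definition~\ref{d:ageometric} once (i) is established, because by hypothesis $g$ is a PNP-free train track representative of $\vphi$. So the content lies in proving full irreducibility.

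For (i), I would argue by contrapositive, using the attracting lamination as the bridge between the dynamical hypothesis on $g$ and the combinatorial hypothesis on the local Whitehead graphs. Since $g$ is an irreducible train track map with Perron-Frobenius transition matrix, $\vphi$ has a well-defined attracting lamination $\Lambda^+(\vphi)$ whose realization in $\Gamma$ consists of bi-infinite legal lines obtained as direct limits of $g$-iterates of edges. At each vertex $v \in V(\Gamma)$, the turns appearing as subsegments of leaves of $\Lambda^+(\vphi)$ are exactly the $g^k$-taken turns for various $k\ge 0$, i.e., the edges of $LW(v;\Gamma)$ from Definition~\ref{d:WhiteheadGraphs}.

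Now suppose $\vphi$ is not fully irreducible, so some positive power $\vphi^N$ preserves the conjugacy class of a proper free factor $[F']$ of $F_r$. Since $\Lambda^+(\vphi) = \Lambda^+(\vphi^N)$, the lamination $\Lambda^+(\vphi)$ is carried by $[F']$ in the sense of Bestvina--Feighn--Handel. Realizing $[F']$ via a proper subgraph structure compatible with $g$, every leaf of $\Lambda^+(\vphi)$ stays inside the carrying subgraph. At each vertex $v$ of $\Gamma$ the incident directions then split into those ``inside'' and those ``outside'' the carrying subgraph, and no turn taken by $\Lambda^+(\vphi)$ can straddle this partition. At any vertex where both kinds of directions occur, this yields a nontrivial partition of $V(LW(v;\Gamma))$ with no edges across it, contradicting the connectedness hypothesis.

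The main obstacle is the carrying step: translating the abstract invariance of $[F']$ under $\vphi^N$ into a concrete partition of the directions at each vertex of $\Gamma$ compatible with the vertex set of $LW(v;\Gamma)$, together with verifying that for at least one vertex $v$ both ``inside'' and ``outside'' directions actually occur. For the former I would invoke the Bestvina--Feighn--Handel theory of free factor systems carrying laminations; for the latter I would use that $[F']$ is a \emph{proper} free factor together with the Perron-Frobenius property, which forces $g$-iterates of any edge to cross every edge of $\Gamma$ and hence every vertex with its full set of incident directions. The cleanest reduction is to appeal directly to the refined criterion established in the cited \cite{IWGII}.
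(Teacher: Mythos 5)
The paper supplies no proof of Proposition~\ref{prop:FIC}: it is stated as essentially \cite[Proposition~4.1]{IWGII}, with the sentence preceding the statement contributing only the one-line observation that a fully irreducible with a PNP-free train track representative is ageometric by Definition~\ref{d:ageometric}. You correctly split off the ageometricity claim as immediate and correctly identify citing \cite{IWGII} as the cleanest route, so on that score your proposal matches what the paper actually does.

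Your sketched contrapositive argument for full irreducibility, however, has a real gap at the carrying step. You assert that if $\vphi^N$ preserves $[F']$ for a proper free factor $F'$, then $\Lambda^+(\vphi)=\Lambda^+(\vphi^N)$ is carried by $[F']$. That implication does not follow: $\vphi^N$ preserving $[F']$ says nothing about which laminations of $\vphi^N$ (if any) the factor $[F']$ carries, and in the present setting the Perron--Frobenius hypothesis pushes the other way, since leaves of $\Lambda^+$ realize every edge of $\Gamma$. The subsequent step --- ``realizing $[F']$ via a proper subgraph structure compatible with $g$'' so that leaves stay inside --- is then unavailable: a $g$-invariant proper subgraph carrying a noncontractible component contradicts irreducibility of $g$ outright, and a non-$g$-invariant subgraph has no reason to contain $g$-iterates of edges. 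Ruling out the scenario you are trying to assume, using the connected local Whitehead graph hypothesis together with the PNP-free hypothesis, is precisely the content of \cite[Proposition~4.1]{IWGII}; it cannot be taken as a starting point. So the direct argument as written does not close, and you would indeed need to fall back on the citation, which is what the paper does.
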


\subsection{Lone Axis Fully Irreducible Outer Automorphisms}{\label{ss:LoneAxisFullyIrreducibles}}

In \cite{mp13} Mosher and Pfaff defined the property of being a lone axis fully irreducible outer automorphism. In lay terms this means that there is only one fold line in $\os$ that is invariant under $\vphi$.

\begin{thm}[\cite{mp13}]{\label{t:uniqueaxis}} Let  $\varphi \in Out(F_r)$ be an ageometric fully irreducible outer automorphism. Then $\vphi$ is a lone axis fully irreducible if and only if ~\\
\vspace{-5mm}
\begin{enumerate}
\item the rotationless index satisfies $i(\varphi) = \frac{3}{2}-r$ and 
\item no component of the ideal Whitehead graph $\mathcal{IW}(\varphi)$ has a cut vertex.
\end{enumerate}
\end{thm}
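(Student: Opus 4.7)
The plan is to prove the two directions separately, analyzing how $\vphi$-invariant fold lines in $\os$ correspond to rotationless train-track representatives of $\vphi$ together with compatible metrics. The key observation is that the number of distinct periodic fold lines is controlled by two independent invariants of $IW(\vphi)$: the total vertex count (reflected in the index sum) and the presence of cut vertices (reflecting local gate ambiguity at principal vertices).

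For the forward direction I would argue by contrapositive. Using an Euler-characteristic computation on the underlying graph of a rotationless train-track representative $g\colon \Gamma \to \Gamma$, the condition $i(\vphi)=\frac{3}{2}-r$ turns out to be equivalent to every principal vertex of $g$ having exactly three periodic directions (equivalently, every component of $IW(\vphi)$ having exactly $3$ vertices). If $i(\vphi)>\frac{3}{2}-r$, some principal vertex $v$ has at least four periodic directions, and partitioning the gates at $v$ yields a blow-up of $\Gamma$ to a new rotationless representative $g'\colon \Gamma'\to \Gamma'$ lying in a strictly larger-dimensional open simplex of $\os$; the periodic fold line of $g'$ is then distinct from that of $g$. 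Similarly, if some component of $IW(\vphi)$ has a cut vertex at a direction $d$, the gate at the corresponding principal vertex admits a nontrivial splitting along $d$ that produces a distinct rotationless representative, and hence a second $\vphi$-periodic fold line. In either case $\vphi$ fails to be lone axis.

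For the reverse direction, suppose both conditions hold. The index identity forces each principal vertex to be trivalent in the sense of periodic directions, and the no-cut-vertex hypothesis rigidifies the local Whitehead graph at each principal vertex so that no nontrivial blow-up is possible. These two conclusions combine to show that the rotationless train-track representative of $\vphi$ is unique up to change of marking, so the open simplex $\sigma\subseteq \os$ that it determines is intrinsic to $\vphi$. One then verifies that any $\vphi$-invariant fold line must pass through the $\vphi$-orbit of $\sigma$ with tangent data prescribed by the Perron--Frobenius eigenvector of the transition matrix, giving the unique axis.

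The main obstacle will be the reverse direction, specifically ruling out $\vphi$-invariant fold lines that visit simplices other than $\sigma$ (for instance, lines arising from alternative Stallings fold decompositions or from train-track representatives subdividing vertices of $\Gamma$). The strategy is to leverage the rigidity imposed by the two hypotheses to show that any such alternative representative, after collapsing forests of length-zero edges and identifying gates, reduces to $g$, so the only folds available along the axis are those from the essentially unique Stallings decomposition of $g$. This reduction, together with Lemma~\ref{l:stallingsgoedesics}, then forces the axis to be unique.
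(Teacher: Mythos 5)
This statement is quoted from Mosher--Pfaff \cite{mp13} and the present paper gives no proof of it, so there is no in-paper argument to compare against; I will instead assess your sketch on its own terms.

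Your forward direction rests on the claim that $i(\varphi)=\tfrac{3}{2}-r$ ``turns out to be equivalent to every principal vertex of $g$ having exactly three periodic directions (equivalently, every component of $IW(\varphi)$ having exactly $3$ vertices).'' This is false, and the paper you are working from makes the failure explicit: Definitions \ref{d:DominantStratum} and \ref{d:PrincipalStratum} distinguish between $r$-dominant graphs (unions of complete graphs with index sum $\tfrac{3}{2}-r$) and the single $r$-principal graph $\Delta_r$ ($2r-3$ triangles). For instance in rank $4$, index sum $-\tfrac{5}{2}$ is realized by five triangles, but also by a $K_4$ together with three triangles, or by a single $K_7$. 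What $i(\varphi)=\tfrac{3}{2}-r$ actually forces, for a PNP-free rotationless representative, is a global count: if $V$ is the number of principal vertices and $K$ the total number of periodic directions, then $i(\varphi)=V-K/2$; combined with $E-V=r-1$ this says $i(\varphi)=\tfrac{3}{2}-r$ iff all vertices are principal and there is exactly one non-periodic direction (which is precisely the content of Proposition \ref{P:EveryVertexPrincipal}). It says nothing about the valence of individual vertices. Consequently your contrapositive step --- ``if $i(\varphi)>\tfrac{3}{2}-r$, some principal vertex has at least four periodic directions, hence blow up'' --- does not go through; in fact a larger index sum means \emph{fewer} periodic directions, not more, and the two hypotheses (1) and (2) play independent roles that your sketch conflates. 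The cut-vertex condition in \cite{mp13} is not about ``splitting a gate along $d$'' but about whether the Handel--Mosher axis bundle for $\varphi$ branches, and that branching mechanism (a cut vertex permits a ``folding in a different order'' move producing a second periodic fold line) is the content that needs to be supplied and is absent here. The reverse direction as sketched also leans on ``uniqueness of the rotationless representative up to change of marking,'' which is a much stronger statement than the theorem asserts and is not established by the preceding steps.
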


\begin{rk}\label{noPNP}
It will be important for our purposes that each train track representative of an ageometric lone axis fully irreducible $\vphi$ is PNP-free (\cite[Lemma 4.4]{mp13}).
\end{rk}

The unique axis is a periodic fold line and one may choose a particularly nice train track representative to generate it (Definition \ref{d:StallingsFoldDecomposition}).

\begin{prop}[\cite{mp13}]\label{P:EveryVertexPrincipal}
Let $\vphi$ be an ageometric lone axis fully irreducible outer automorphism. Then there exists a train track representative $g \from \Gamma \to \Gamma$ of some power $\vphi^R$ of $\vphi$ so that all vertices of $\Gamma$ are principal, and fixed, and all but one direction is fixed.
\end{prop}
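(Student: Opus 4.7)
The plan is to replace $\vphi$ by a rotationless power, start with any irreducible train track representative, modify it so that every vertex is principal, and then obtain the ``all but one direction is fixed'' statement from a direct Euler characteristic count forced by the lone axis hypothesis.

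First I would choose $R\ge 1$ so that $\vphi^R$ is rotationless. By the definition recalled in Section~\ref{ss:NPs}, this guarantees that every train track representative of $\vphi^R$ is rotationless, so every principal point is fixed and every periodic direction is fixed. Let $g\from \Gamma\to\Gamma$ be any irreducible train track representative of $\vphi^R$; by Remark~\ref{noPNP}, $g$ is PNP-free, so the principal points of $g$ are precisely the vertices with at least three fixed directions.

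The main step is to modify $g$ (without changing its conjugacy class) so that every vertex of $\Gamma$ is principal. For a rotationless train track map, the number of gates at a vertex equals the number of fixed directions, so a non-principal vertex $v$ has at most two gates and hence at most one legal turn. When $v$ has valence~$2$, a standard valence-two homotopy removes $v$ and yields another irreducible, PNP-free train track representative of $\vphi^R$, so I would iterate to eliminate all such vertices. To rule out a non-principal vertex of valence $\ge 3$, I would combine the lone axis conditions in Theorem~\ref{t:uniqueaxis}, the identification $IW(\vphi) = \bigsqcup_v SW(g;v)$ from Definition~\ref{d:WhiteheadGraphs}, and Proposition~\ref{prop:FIC}: at such a $v$, either a cut vertex appears in a component of $IW(\vphi)$, contradicting condition~(2) of Theorem~\ref{t:uniqueaxis}, or the restricted gate structure permits a rerouted Stallings fold decomposition producing a second $\vphi^R$-invariant fold line, contradicting uniqueness of the axis. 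This is the main obstacle, and is where the lone axis hypothesis plays its essential role.

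Once every vertex is principal, I would finish by an Euler characteristic count. Let $k_v\ge 3$ denote the number of fixed directions at $v$ and $d_v$ its valence. By Theorem~\ref{t:uniqueaxis}(1), $i(\vphi) = \tfrac{3}{2}-r$, so the index sum formula gives
$$\sum_{v\in V(\Gamma)} \Bigl(1 - \tfrac{k_v}{2}\Bigr) \;=\; \tfrac{3}{2} - r, \qquad\text{hence}\qquad \sum_v k_v \;=\; 2|V(\Gamma)| + 2r - 3.$$
On the other hand, $\Gamma$ has rank $r$, so $|E(\Gamma)| = |V(\Gamma)| + r - 1$, and the total number of directions is $\sum_v d_v = 2|E(\Gamma)| = 2|V(\Gamma)| + 2r - 2$. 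Subtracting gives exactly one non-fixed direction, which is the third conclusion of the proposition; the first two conclusions follow from every vertex being principal combined with rotationlessness.
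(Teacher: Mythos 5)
The paper does not prove this proposition; it cites it from Mosher--Pfaff \cite{mp13}, so there is no internal argument to compare against. Evaluating your attempt on its own merits: the rotationless reduction and the final Euler--characteristic count are essentially correct. Given that every vertex is principal, the identity $\sum_v k_v = 2|V|+2r-3$ (from $i(\vphi)=\tfrac32-r$) together with $\sum_v d_v = 2|E| = 2|V|+2r-2$ does pin down exactly one non-fixed direction, and fixedness of vertices and periodic directions follows from rotationlessness. One small caveat in that count: the index formula sums over \emph{components} of $IW(\vphi)$, not over vertices, so you are implicitly assuming each $SW(v)$ is connected. This can be repaired by observing that if some $SW(v)$ had $m_v\ge 2$ components the same count would force a negative number of non-fixed directions, but the step deserves to be said.

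The genuine gap is exactly where you flag it: the claim that non-principal vertices of valence $\ge 3$ cannot occur. Neither alternative you offer works as stated. Option (i) -- ``a cut vertex appears in a component of $IW(\vphi)$'' -- cannot be right, because a non-principal vertex $v$ contributes \emph{nothing} to $IW(\vphi)=\bigsqcup_{w\ \mathrm{principal}} SW(w)$; deleting $v$ from consideration changes neither the vertex set nor the edge set of $IW(\vphi)$, so condition (2) of Theorem~\ref{t:uniqueaxis} is not touched. Option (ii) -- a ``rerouted Stallings fold decomposition producing a second axis'' -- points at the right phenomenon (this is morally what the lone-axis hypothesis is for), but it is not an argument: you have not said what the second fold line is, why it is $\vphi^R$-invariant, or why it is genuinely distinct from $A_\vphi$. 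In fact, the Euler count alone, with $i(\vphi)=\tfrac32-r$, is \emph{consistent} with the existence of a non-principal valence-$3$ vertex carrying all the non-fixed directions (e.g.\ $r=3$, three vertices with local contributions $4,3,0$ fixed directions), so the count cannot substitute for this step. One must actually use the lone-axis hypothesis here, via the structure of the axis bundle in the sense of Handel--Mosher as exploited in \cite{mp13}, and that part of the argument is missing. Until this step is supplied, the proof is incomplete.
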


\begin{df}[$A_{\vphi}$]{\label{d:axes}} Given a lone axis fully irreducible outer automorphism $\vphi$, we denote its axis by $A_{\vphi}$. In particular, $A_{\vphi}$ will be the periodic fold line determined by any (and every) train track representative of any positive power of $\vphi$.
\end{df}

\section{Stratification of the space of fold lines}{\label{ss:StratificationSpaceOfFoldLines}}

\subsection{The space of fold lines}{\label{ss:SpaceOfFoldLines}}

We fix a rank $r \geq 3$ throughout this section. Notice that the Outer space $\os$ has dimension $0 \leq k \leq 3r-4$.

\begin{df}[$\mF_r$ \& $\periodiclines$]\label{d:SpaceFoldLines} $\mF_r$ will denote the set of all simple fold lines in $\os$ (see Definition \ref{d:SimpleFoldLines}). $\periodiclines \subset \mF_r$ will denote the set of all periodic fold lines in $\os$ (see Definition \ref{d:PeriodicFoldLine}).
\end{df}

\begin{df}[Topology on $\mF_r$]\label{d:TopologyOnSpaceFoldLines} Let $\mL$ be a geodesic in $\os$. We let $N(\mL,\varepsilon)$ denote the $\veps$-neighborhood of $\mL$ in $\os$ with respect to the symmetrized Lipschitz metric on $\os$. We let $B(\mL,R,\veps) \subset \mF_r$ denote the set of all $\mL'\in \mF_r$ such that $\mL'$ has a length-$R$ subsegment $\beta$ with $\beta \subset N(\mL,\varepsilon)$. For each integer $k$ with $0 \leq k \leq 3r-4$, we denote by $B^k(\mL,R,\veps)$ the set of all $\mL'\in B(\mL,R,\veps)$ such that the line $\mL'$ is contained in the $k$-skeleton $CV^{(k)}_r$.

We topologize $\mF_r$ by using, for each $\mL\in \mF_r$, the family of sets $\{B(\mL,R,\veps)\}_{\veps>0, R\ge 1}$ as the basis of neighborhoods of $\mL$ in $\mF_r$.
\end{df}

\begin{rk}\label{r:TopologyOnSpaceFoldLines}
It is a subtle but rather minor point to decide which metric to use in Definition \ref{d:TopologyOnSpaceFoldLines}.
The Lipschitz metric $d_L$ is not symmetric. Define the symmetrized Lipchitz metric as $d_s(X,Y) = d_L(X,Y)+d_L(Y,X)$. Consider the 4 possible topologies arising from the following generating sets: balls in the symmetrized metric, balls in the simplicial metric, ``incoming balls'' in the Lipschitz metric $B_{in}(X,r) = \{ Y \in \os \mid d_L(Y,X) <r \}$, and ``outgoing balls'' in the Lipschitz metric $B_{out}(X,r) = \{ Y \in \os \mid d_L(X,Y) <r \}$. The four topologies on $\os$ coincide \cite{yak}.
However the same is not true for neighborhoods of geodesics. Let $\mL$ be a geodesic and consider $N_{simp}(\mL,\veps) = \{ Y \mid d_{simp}(Y, \mL) < \veps \}$, similarly define $N_{sym}(\mL,\veps)$. Define
$N_{in}(\mL,\veps) = \{ Y \mid d(Y,\mL)<\veps\}$ and
$N_{out}(\mL,\veps) = \{ Y \mid d(\mL,Y)< \veps \}$.
Then sets of the first three types are equivalent, in that for each of the two types and for all $\veps$, one may find an $\veps'$ so that $N(\mL,\veps)$ of one of the types contains $N(\mL,\veps')$ of the other type. The same is not true for $N_{out}(\mL,\veps)$. There exists a geodesic $\mL$ and $\veps>0$ so that for all $\veps'$, $N_{sym}(\mL,\veps)$ does not contain $N_{out}(\mL,\veps')$. The outgoing neighborhoods are ``too big,'' hence we use the others (these geodesics $\mL$ necessarily don't stay in any ``thick part'' of $\os$).
\end{rk}

\begin{rk}
Note, with the topology defined above, the space $\mF_r$ is nonHausdorff: if two distinct fold lines $\mL,\mL'\in \mF_r$ overlap along a common subray, then each neighborhood of $\mL$ in $\mF_r$ contains $\mL'$ and each neighborhood of $\mL'$ contains $\mL$. Nevertheless, the topology on $\mF_r$ given in Definition~\ref{d:TopologyOnSpaceFoldLines} is natural for our purposes. Moreover, the topology is better behaved when restricted to the subspace $\periodiclines\subseteq \mF_r$, the main object of interest in this paper.
\end{rk}

\subsection{Strata of geodesics}{\label{ss:PrincipalStrata}}

\begin{df}[$r$-Dominant graph]\label{d:DominantStratum}
Let $r \geq 3$ be fixed. A finite graph $\mG$ is \emph{$r$-dominant} if
it is a disjoint union of complete graphs and has index sum $\frac{3}{2}-r$, see Definition \ref{d:WhiteheadGraphs}.
\end{df}

\begin{df}[$r$-Dominant outer automorphism]
Let $\vphi \in \out$ be fully irreducible and suppose $IW(\vphi)$ is $r$-dominant. Then we say that $\vphi$ is an \emph{(r-)dominant} outer automorphism.
\end{df}

\begin{rk}
Notice that if $\vphi$ is dominant then it is ageometric and by Theorem \ref{t:uniqueaxis} $\vphi$ is also a lone axis outer automorphism.
\end{rk}

\begin{df}[Stratum $\mS_r(\mG)$]\label{d:Stratum}
For a finite graph $\mG$, we define the \emph{stratum $\mS_r(\mG) \subset \periodiclines$ for $\mG$}:
$$\mS_r(\mG) :=\{\mL \mid \mL \text{ is an axis for a fully irreducible } \vphi \in Out(F_r) \text{ with } IW(\vphi) \cong \mG \}.$$
If $\mG$ is an $r$-dominant graph, we call $\mS_r(\mG)$ a \emph{dominant stratum}.
\end{df}

\begin{df}[$\mG$-basin $B\mS_r(\mG)$]\label{d:Basin}
For an $r$-dominant graph $\mG$, we define the \emph{$\mG$-basin}:
$$B\mS_r(\mG) := \{\mL \mid \mL \text{ is an axis for an ageometric fully irreducible } \vphi \in Out(F_r)$$ $$ \text{where } IW(\vphi) \text{ is the union of some subset of the components of } \mG \}.$$
\end{df}
Thus $B\mS_r(\mG)\subseteq \mS_r(\mG) \subset \periodiclines$. Notice that each element in $B\mS_r - \mS_r$ is not a lone axis automorphism since its index sum is strictly larger than $\frac{3}{2}-r$.

We give special names (and attention) to the following dominant strata.

\begin{df}[Principal strata $\mP_r$]\label{d:PrincipalStratum} Let $\pgraph$ denote the graph that is a disjoint union of $2r-3$ triangles. Notice that, in particular, $\pgraph$ has index sum $\frac{3}{2}-r$. We define the \emph{(rank-$r$) principal stratum} of $\periodiclines$ as $\mP_r = \mS_r(\pgraph)$.

In light of the above, we call a fully irreducible outer automorphism $\vphi \in Out(F_r)$ with $IW(\vphi) \cong \pgraph$ a \emph{principal} outer automorphism.

We define the \emph{(rank-$r$) principal stratum basin} in $\periodiclines$ as $B\mP_r :=B\mS_r(\pgraph)$. Outer automorphisms with axes in $B\mP_r$ will be called \emph{principal basin} outer automorphisms.
\end{df}
Note that
$$B\mP_r :=\{\mL \mid \mL \text{ is an axis for a fully irreducible } \vphi \in Out(F_r) \text{ having } IW(\vphi) \cong \Delta_{r'} \text{ with } r' \leq r \}.$$

\begin{rk}
We have noted that every dominant outer automorphism is a lone axis outer automorphism.
If an outer automorphism is principal then its axis intersects the interior of a maximum dimensional simplex in $\os$. This can be seen to follow from Proposition \ref{P:EveryVertexPrincipal}. Moreover, if $\vphi$ is dominant and not principal, then it will not pass through the interior of a maximum dimensional simplex, as one of its Whitehead graphs comes from a stable whitehead graph of a vertex with more than three stable directions.
\end{rk}

\begin{df}[Principal index list]\label{d:PrincipalIndexList}
Since the index list for a principal outer automorphism is comprised of terms $-\frac{1}{2}$ summing to $\frac{3}{2}-r$, we call this the \emph{principal} index list.
\end{df}

\section{Nielsen path prevention}{\label{s:NPPrevention}}

Definition \ref{d:LongTurns} is more or less in the spirit of \cite{cl15}.
We shall prove that under certain conditions a map $g$ is \emph{legalizing} (see Definition \ref{d:Legalizing}). Our goal will be to show that if a train track map $h$ factors through $g$ then it cannot admit a PNP.

\begin{df}[Long turns]\label{d:LongTurns} Suppose that we have a train track structure on $\Gamma$ induced by a trian track map $g$ on $\Gamma$ (see Definition \ref{d:GateStructures}). By a \emph{long turn} at a vertex $v$ we will mean a pair of legal paths $\{\alpha, \beta\}$ emanating from $v$. If $\{D(\alpha), D(\beta)\}$ is legal, then we call $\{\alpha, \beta\}$ \emph{legal}. If $\{D(\alpha), D(\beta)\}$ is illegal, then we call $\{\alpha, \beta\}$ \emph{illegal}.

If either $g(\alpha)$ is an initial subpath of $g(\beta)$ or vice versa, then we call $\{\alpha, \beta\}$ \emph{extendable}.
Those long turns that are not extendable can be characterized as either \emph{safe} or \emph{dangerous} depending on whether, respectively, $g_{\#}(\ol{\alpha} \beta)$ is a legal path or not (whether the cancellation of $g_{\#}(\alpha)$ and $g_{\#}(\beta)$ ends with a legal turn or an illegal turn).
\end{df}

The following is a relatively direct consequence of Lemma \ref{l:iNP}.

\begin{lem}{\label{l:longinps}}
Let $g \colon \Gamma\to\Gamma$ be an expanding irreducible train track map and $\rho$ an iNP for $g$. Then $\rho=\bar \rho_1\rho_2$, where $\{\rho_1,\rho_2\}$ is a dangerous long turn for each positive power $g^k$ of $g$. More generally, if $g \colon \Gamma\to\Gamma$ has a PNP, then $\Gamma$ contains dangerous long turns for each positive power $g^k$ of $g$. Thus, an expanding irreducible train track map with no dangerous long turns has no PNPs.
\end{lem}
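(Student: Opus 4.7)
The plan is to apply Lemma~\ref{l:iNP} directly. For an iNP $\rho$ of $g$, the lemma produces the decomposition $\rho=\bar\rho_1\rho_2$ with $\rho_1,\rho_2$ nontrivial legal paths emanating from a common vertex $v$ and with $\{D\rho_1,D\rho_2\}$ an illegal turn at $v$. A quick observation is that a path or turn is $g$-legal iff it is $g^k$-legal: once $(Tg)^n$ becomes degenerate on a turn it remains so, and every integer is dominated by some positive multiple of $k$. Hence $\{\rho_1,\rho_2\}$ is an illegal long turn for every positive power $g^k$.

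Next, because $\rho$ is a Nielsen path, $g^k_{\#}(\rho)=\rho$ for each $k\ge 1$, so $g^k_{\#}(\bar\rho_1\rho_2)=\bar\rho_1\rho_2$ still contains the illegal turn at $v$. This non-legality of the tightening simultaneously gives dangerousness and rules out extendability: if, say, $g^k(\rho_1)$ were an initial subpath of $g^k(\rho_2)$, the tightening would be a terminal subpath of the legal path $g^k(\rho_2)$, hence legal, contradicting the presence of the illegal turn at $v$.

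For the PNP case, I would apply the iNP argument above to $g^R$, where $R$ is the period of $\rho$, to obtain the decomposition $\rho=\bar\rho_1\rho_2$, and then claim the same $\{\rho_1,\rho_2\}$ is dangerous for every $g^k$. The only point requiring care when $R\nmid k$ is that $g^k_{\#}(\rho)$ is no longer $\rho$ but some other element of the finite $g_{\#}$-orbit $\{\rho,g_{\#}(\rho),\ldots,g^{R-1}_{\#}(\rho)\}$ of $g$-PNPs. I expect the main obstacle to be ruling out that some element of this orbit is legal. The resolution is that $g$ is expanding, so for any legal path $\gamma$ one has $g^n_{\#}(\gamma)=g^n(\gamma)$ with $|g^n(\gamma)|\to\infty$, and such a $\gamma$ cannot be $g_{\#}$-periodic; hence every element of the orbit contains an illegal turn, and the previous non-extendable/dangerous argument goes through verbatim. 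The final assertion of the lemma is then the contrapositive of what we have shown.
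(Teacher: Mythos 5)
The paper does not actually supply a proof of this lemma (it is dismissed as "a relatively direct consequence of Lemma~\ref{l:iNP}"), so your job here is to supply the omitted argument, and you do so correctly. The key points all check out: legality of turns and paths is identical for $g$ and any positive power $g^k$; for an iNP one has $g^k_\#(\bar\rho_1\rho_2)=\rho$, and the presence of the illegal turn in $\rho$ simultaneously rules out extendability (else the tightening would be a subpath of the legal path $g^k(\rho_2)$ or $g^k(\rho_1)$) and certifies dangerousness; and in the PNP case, passing to $g^R$, extracting an iNP for $g^R$, applying Lemma~\ref{l:iNP} to $g^R$, and then using expansion to rule out that any element of the finite $g_\#$-orbit of $\rho$ is legal handles the powers $k$ with $R\nmid k$. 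One tiny bookkeeping remark worth noting in a final write-up: when you invoke Lemma~\ref{l:iNP} for $g^R$ you get $\rho_1,\rho_2$ as $g^R$-legal paths with a $g^R$-illegal turn between them; your observation that $g$- and $g^k$-legality agree is what lets you then treat $\{\rho_1,\rho_2\}$ as a long turn for every power, and this step deserves to be said explicitly rather than left implicit. Otherwise the argument is sound and complete.
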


\begin{df}[k-Protected path]\label{d:kProtected} Let $g \colon \Gamma \to \Gamma$ be an expanding irreducible train track map. Let $\gamma$ be a path in $\Gamma$ and $\alpha$ a subpath of $\gamma$ whose endpoints are at vertices. We say that $\alpha$ is \emph{k-protected} if
\begin{itemize}
\item $\gamma$ contains $\geq k$ edges to the right of $\alpha$ and $\geq k$ edges to the left of $\alpha$ and
\item the length-$k$ subpath of $\gamma$ directly to the right of $\alpha$ and the length-$k$ subpath of $\gamma$ directly to the left of $\alpha$ are each legal.
\end{itemize}
\end{df}

\begin{df}[Splitting]\label{d:Splitting} Let $g \colon \Gamma \to \Gamma$ be an expanding irreducible train track map. Let $\gamma$ be a path in $\Gamma$. We say that $\gamma=\dots\gamma_{l-1}\gamma_l\dots$ is a \emph{k-splitting} if $g_{\#}^k(\gamma)= \dots g_{\#}^k(\gamma_{l-1})g_{\#}^k(\gamma_l)\dots$ is a decomposition into subpaths. $\gamma$ is a \emph{splitting} if it is a $k$-splitting for all $k>0$.
\end{df}

The following is a special case of the definition of $P_r$ on pg. 558 of \cite{bfh00}.

\begin{df}[$Pth_g$]\label{d:PreNielsenPaths} Let $g \colon \Gamma \to \Gamma$ be an expanding irreducible train track map. We let $Pth_g$ denote the paths in $\gamma$ so that:
\begin{enumerate}
\item Each $g_{\#}^k(\gamma)$ contains exactly one illegal turn.
\item The number of edges in $g_{\#}^k(\gamma)$ is bounded independently of $k$.
\end{enumerate}
\end{df}

The following is \cite[Lemma 4.2.5]{bfh00}.

\begin{lem}[\cite{bfh00}]{\label{l:PFinite}}
Let $g \colon \Gamma \to \Gamma$ be an expanding irreducible train track map. Then $Pth_g$ is finite.
\end{lem}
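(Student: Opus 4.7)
Plan: I would reduce finiteness of $Pth_g$ to finiteness of the set of PNPs of $g$. Fix $\gamma \in Pth_g$. Since $\gamma$ has a unique illegal turn, by the same kind of decomposition as in Lemma \ref{l:iNP} write $\gamma = \bar\alpha\beta$ with $\alpha,\beta$ legal paths emanating from the common vertex of that illegal turn. Iterating and tightening, set $g^k_\#(\gamma) = \bar{\alpha_k}\beta_k$ with $g(\alpha_{k-1}) = \mu_k \alpha_k$ and $g(\beta_{k-1}) = \mu_k \beta_k$, where $\mu_k$ is the common initial segment canceled in tightening. By condition (2) of Definition \ref{d:PreNielsenPaths}, the integers $|g^k_\#(\gamma)|$ are uniformly bounded in $k$ by some $L_\gamma$, and the finite graph $\Gamma$ has only finitely many edge-paths of length $\leq L_\gamma$. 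Hence the forward orbit $\{g^k_\#(\gamma) : k \geq 0\}$ is a finite subset of $Pth_g$.

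Next, I would identify the eventually-periodic ``core'' of $Pth_g$ with PNPs. By pigeonhole there exist $0 \leq a < a+b$ with $\tau := g^a_\#(\gamma) = g^{a+b}_\#(\gamma)$, so $\tau$ has one illegal turn and is fixed by $g^b_\#$. Writing $\tau = \bar{\alpha_\tau}\beta_\tau$, the equality $g^b_\#(\tau) = \tau$ unpacks to $g^b(\alpha_\tau) = \mu\alpha_\tau$ and $g^b(\beta_\tau) = \mu\beta_\tau$ for the common initial segment $\mu$; comparing endpoints forces the endpoints of $\tau$ to be $g^b$-periodic vertices and shows that $g^b(\tau) = \tau$ rel endpoints. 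Thus $\tau$ is a PNP of $g$, and by Bestvina--Handel there are only finitely many such PNPs of $g$.

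The main obstacle is the remaining step: to show that the ``prehistory'' $\gamma, g_\#(\gamma), \dots, g^{a-1}_\#(\gamma)$ is uniformly bounded, so that $Pth_g$ is a finite union of finite preimage sets under $g_\#$. Given $\tau' = \bar\sigma\rho \in Pth_g$, any preimage $\bar\alpha\beta \in Pth_g$ must satisfy $g(\alpha) = \mu\sigma$ and $g(\beta) = \mu\rho$ for some common prefix $\mu$, and the question is to bound $|\mu|$ uniformly. I would invoke the Perron--Frobenius expansion $|g(\delta)| \geq \lambda|\delta|$ with $\lambda > 1$ for legal $\delta$, together with the requirement that \emph{all} further iterates of $\bar\alpha\beta$ have bounded length: the cancellation balance between $|g^k(\alpha)|$ and $|g^k(\beta)|$ forces $|\mu|$ to be uniformly bounded, for otherwise the PF growth of the longer side would eventually outpace the possible cancellation and produce iterates with arbitrarily long one-illegal-turn segments, contradicting condition (2). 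Combined with finiteness of bounded-length edge-paths in the finite graph $\Gamma$, this yields finiteness of $Pth_g$.
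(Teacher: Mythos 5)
The paper does not actually prove this statement; it is quoted from \cite[Lemma~4.2.5]{bfh00} as an external citation, so there is no internal argument for you to match. Evaluated on its own terms, your proposal has the right overall shape but a genuine gap at exactly the step you flag as ``the main obstacle.''

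The gap is in the final paragraph. The assertion that the ``cancellation balance'' between $|g^k(\alpha)|$ and $|g^k(\beta)|$ together with PF expansion forces $|\mu|$ to be uniformly bounded is not, as stated, an argument. Indeed, for a periodic Nielsen path $\rho=\bar\rho_1\rho_2$ the cancellation at each step is on the order of $(\lambda-1)|\rho|/2$, which is large when $|\rho|$ is large; the PF growth is exactly matched by the cancellation, so ``the longer side outpacing the cancellation'' is not a phenomenon one can appeal to without already knowing the cancellation is controlled. The missing ingredient is the \emph{Bounded Cancellation Lemma}: for a $\pi_1$-injective graph map $g$ there is a constant $C$ (independent of the paths involved) so that for any tight concatenation $\alpha\beta$ one has $|g_\#(\alpha)|+|g_\#(\beta)| - |g_\#(\alpha\beta)| \le 2C$. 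Once BCC is in hand, you immediately get, in the eigenmetric, $|g_\#(\gamma)| \ge \lambda|\gamma| - 2C$ for $\gamma=\bar\alpha\beta$ with $\alpha,\beta$ legal; iterating forces $|\gamma|\le 2C/(\lambda-1)$, and finiteness of $Pth_g$ follows from finiteness of bounded-length edge paths. Your attempt to instead derive a bound on $|\mu|$ from the bounded-iterates hypothesis runs the implication in the wrong direction and cannot succeed without BCC.

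Once the uniform length bound is available, the detour through forward orbits, pigeonhole, and finiteness of PNPs (steps 1--2 of your plan) becomes unnecessary: finiteness is already immediate from the uniform bound. Those steps are not wrong, but they solve only the per-$\gamma$ finiteness (your bound $L_\gamma$ is $\gamma$-dependent), so they do not contribute to the actual conclusion. A small additional imprecision: when you say ``by Bestvina--Handel there are only finitely many such PNPs of $g$,'' you should really invoke finiteness of \emph{indivisible} PNPs, which applies here since the paths in $Pth_g$ have exactly one illegal turn by condition (1).
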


The next lemma states that there is a uniform $k$ so that for each long turn $\{\al, \beta\}$ the iterate $g^k(\bar \al \beta)$ splits into at most three well understood parts.

\begin{lem}{\label{l:LongTurnSplittingPower}}
Suppose that $g$ is an expanding irreducible train track map representing a fully irreducible outer automorphism $\vphi \in Out(F_r)$. Then there exists some power $g^k$ of $g$ so that for each long turn $\{\alpha,\beta\}$ for $g$ we have that $g^k_{\#}(\bar\alpha\beta)$ splits with respect to $g$ into paths that are legal paths except for at most a single iNP.
\end{lem}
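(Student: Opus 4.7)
The plan is a case analysis on the type of the long turn $\{\alpha,\beta\}$ (Definition \ref{d:LongTurns}), together with a pigeonhole extraction of an iNP from the finite set $Pth_g$ in the substantive case. If $\{\alpha,\beta\}$ is legal, extendable, or safe, then $g_{\#}(\bar\alpha\beta)$ is already a legal path: directly in the legal case; as a terminal subpath of a legal path $g(\beta)$ (or $g(\alpha)$) in the extendable case; and by definition in the safe case. All further iterates then remain legal, so the trivial splitting works. The substantive case is when $\{\alpha,\beta\}$ is dangerous, in which case $g_{\#}^k(\bar\alpha\beta)$ has at most one illegal turn for every $k\geq 0$ (by induction, since $\alpha$ and $\beta$ are legal so the only illegal turn is at the common basepoint), and either this turn persists for all $k$ or the iterate eventually collapses to a legal path.

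To capture the persistent illegal turn, I would consider the maximal subpath $\mu_k$ of $g_{\#}^k(\bar\alpha\beta)$ containing the illegal turn whose forward iterates under $g_{\#}$ all contain exactly one illegal turn and have a uniformly bounded number of edges. By Lemma \ref{l:PFinite}, each $\mu_k$ lies in the finite set $Pth_g$, and $g_{\#}$ restricts to a self-map of $Pth_g$; pigeonhole then guarantees that for $k$ at least $|Pth_g|$ (uniformly over all long turns), $\mu_k$ is $g_{\#}$-periodic, hence a PNP. Having exactly one illegal turn, it is an iNP by Lemma \ref{l:iNP}, possibly after replacing $g$ by a further power to promote the PNP period to $1$. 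The same finite-set pigeonhole shows that if the iterate eventually becomes legal, it does so within $|Pth_g|$ iterations, so the trivial splitting again suffices beyond that point.

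The resulting decomposition $g_{\#}^k(\bar\alpha\beta)=\nu_1\cdot\mu_k\cdot\nu_2$, with $\nu_1,\nu_2$ legal and $\mu_k$ an iNP, is a splitting in the sense of Definition \ref{d:Splitting}: the endpoints of an iNP are principal fixed vertices, and the turns between $\nu_i$ and $\mu_k$ at these endpoints are legal, because the unique illegal turn of the whole iterate lies interior to $\mu_k$. Hence no cancellation occurs at the concatenation points under further iteration, and $g_{\#}^{j+k}(\bar\alpha\beta) = g_{\#}^j(\nu_1)\cdot g_{\#}^j(\mu_k)\cdot g_{\#}^j(\nu_2)$ as subpath decompositions for all $j\geq 0$. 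The uniform $k$ is then the maximum of $1$ and $|Pth_g|$ across the case analysis. The main obstacle I anticipate is the careful bookkeeping of the core $\mu_k$ and its compatibility with $g_{\#}$ as a dynamical system on $Pth_g$ (in particular, verifying that the maximal-core extraction commutes sufficiently well with $g_{\#}$ so that pigeonhole on $Pth_g$ actually produces a periodic point of the induced map); once this is set up, pigeonhole together with the legal-turn check at the endpoints of the iNP finish the argument.
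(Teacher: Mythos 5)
The substantive gap is at the very start of your ``dangerous'' case: you posit a ``maximal subpath $\mu_k$ of $g_{\#}^k(\bar\alpha\beta)$ containing the illegal turn whose forward iterates under $g_{\#}$ all contain exactly one illegal turn and have a uniformly bounded number of edges,'' and then observe that such a $\mu_k$ must lie in $Pth_g$. But the existence of such a bounded-length core, uniformly over all long turns, is exactly the content that has to be proved, and nothing in your argument supplies it. A priori, as you iterate $g_{\#}$, cancellation around the single illegal turn of $\bar\alpha\beta$ could reach arbitrarily far into $\alpha$ and $\beta$ before stopping; without some bounded-cancellation input there is no reason that the ``hard core'' around the illegal turn stays uniformly short, and Lemma~\ref{l:PFinite} (finiteness of $Pth_g$) only helps \emph{after} you already know your candidate subpath has iterates of bounded length. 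This is precisely why the paper invokes \cite[Lemma~4.2.2]{bfh00}: it provides a constant $K$ so that a $K$-protected subpath gives a genuine splitting point (Definition~\ref{d:kProtected}), which lets one split $\bar\alpha\beta$ into legal pieces and a single subpath of length at most $4K$ through the illegal turn. Only with that length bound in hand can one restrict attention to the finite set $\mS$ of short one-illegal-turn paths and then apply \cite[Lemma~4.2.6]{bfh00} (and the finiteness of $Pth_g$) to promote iterates to legal pieces plus iNPs. Your pigeonhole step on $Pth_g$ is the right endgame and in fact mirrors what Lemma~4.2.6 encapsulates, but it cannot launch without the $K$-protection bound.

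A secondary issue: your final paragraph argues that $\nu_1\cdot\mu_k\cdot\nu_2$ is a splitting because ``the turns between $\nu_i$ and $\mu_k$ at these endpoints are legal, because the unique illegal turn of the whole iterate lies interior to $\mu_k$.'' Legality of the junction turns at time $k$ is not by itself sufficient to rule out cancellation across the junction under \emph{further} iteration; the cancellation that tightens $g(\mu_k)$ back to $\mu_k$ (which happens for every iNP) must be shown to stay interior to $\mu_k$, using that iNP endpoints are $g$-fixed. This can be repaired, but the way it is stated would let the same (false) reasoning ``prove'' a splitting at any legal turn of any path, which is not true in general --- the need for $K$-protection in \cite{bfh00} exists precisely because legal junction turns alone do not guarantee splittings.
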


\begin{proof}
First notice that, since $\vphi$ is fully irreducible, $g \colon \Gamma \to \Gamma$ has only a single EG stratum. Thus, by \cite[Lemma 4.2.2]{bfh00}, there exists a constant $K$ so that, if $\tau$ is a path in $\Gamma$ and if $\sigma$ is a $K$-protected subpath of $\tau$, then $\tau$ can be split at the endpoints of $\sigma$.

There are only finitely many paths of length $\leq 4K$. Let $\mS$ denote the set of all paths of length $\leq 4K$ with only a single illegal turn. Let $k \in \Z$ be the power so that, if $\rho \in \mS$, then either $g_{\#}^k(\rho)$ is legal or for no $n \in \Z$ is $g_{\#}^{nk}(\rho)$ legal. By \cite[Lemma 4.2.6]{bfh00} we can then replace $k$ with a higher power, if necessary, so that for each $\rho \in \mS$, we have that $g_{\#}^k(\rho)$ splits into subpaths that are either legal or an element of $Pth_g$ (a uniform power is possible since $\mS$ is finite). The subpaths that are elements of $Pth_g$ are permuted. Thus, by replacing $k$ with a higher power yet, we can assume that they are iNPs (hence have only one illegal turn).

Let $\{\alpha, \beta\}$ be a long turn. Then using trivial paths as $K$-protected subpaths, $\overline{\alpha}\beta$ can be split into legal paths and a path $\rho$ of length $\leq 4K$ containing the single illegal turn. Since $\rho \in \mS$, we can use the power $k$ of the previous paragraph and obtain that $g^k_{\#}(\bar\alpha\beta)$ splits into subpaths that are either legal or iNPs. But $\overline{\alpha}\beta$ had only one illegal turn, and the number of illegal turns cannot increase under $g^k_{\#}$. So there can only be one iNP in the splitting.
\qedhere
\end{proof}

\begin{lem}{\label{l:DangerousLongTurnsDisappearUnderHighIteration}}
Suppose that $g$ is an expanding irreducible train track map with no PNPs. Then there exists some power $g^k$ of $g$ with no dangerous long turns.
\end{lem}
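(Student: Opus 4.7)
The plan is to derive the conclusion from Lemma \ref{l:LongTurnSplittingPower} by eliminating the iNP possibility in that lemma's splitting. Since every iNP is by definition a Nielsen path, and hence a PNP of period one, the no-PNP hypothesis on $g$ rules out the existence of iNPs.

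I would fix the power $k$ supplied by Lemma \ref{l:LongTurnSplittingPower}: for every long turn $\{\alpha,\beta\}$ of $g$, the path $g^k_{\#}(\bar\alpha\beta)$ splits with respect to $g$ into pieces that are either legal paths or iNPs. Under our hypothesis no iNP piece arises, so every piece of the splitting is legal.

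Next I would argue by contradiction: suppose some long turn $\{\alpha,\beta\}$ were dangerous for $g^k$. Then $\{\alpha,\beta\}$ is non-extendable for $g^k$ and $g^k_{\#}(\bar\alpha\beta)$ contains at least one illegal turn. Since each piece of the splitting is a legal path, this illegal turn cannot lie in the interior of any piece; it must occur at a junction between two consecutive pieces. But a splitting (Definition \ref{d:Splitting}) requires that $g^n_{\#}$ decompose compatibly across each junction for every $n \geq 1$, and an illegal junction turn would eventually be collapsed by some iterate $Dg^n$, forcing cancellation across that junction and violating the splitting property. This contradiction shows that no long turn of $g$ is dangerous for $g^k$.

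Finally, I would observe that a turn is $g$-legal if and only if it is $g^k$-legal, so the train track structures induced by $g$ and $g^k$ coincide and the long turns of $g^k$ are exactly those of $g$. Hence the argument above applies to every long turn of $g^k$, and the lemma follows. The main technical point is the claim that splittings occur only across legal turns; this is essentially a tightening computation that uses the single-EG-stratum structure implicit in the expanding irreducibility of $g$, and I expect it to be the one step that requires a bit of justification beyond the direct invocation of Lemma \ref{l:LongTurnSplittingPower}.
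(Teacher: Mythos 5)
Your proof is correct and follows essentially the same route as the paper's: invoke Lemma~\ref{l:LongTurnSplittingPower}, use the no-PNP hypothesis to rule out iNP pieces in the splitting, and conclude the image of any dangerous long turn would be legal, a contradiction. You fill in a step the paper leaves implicit (why a path that splits into legal pieces must itself be legal — namely, an illegal junction turn would become degenerate under some iterate and force cancellation, violating the splitting property); note, though, that this last step is a general fact about splittings and does not actually rely on the single-EG-stratum structure as you suggest — that hypothesis is used only inside Lemma~\ref{l:LongTurnSplittingPower} itself.
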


\begin{proof}
Let $k$ be as in Lemma \ref{l:LongTurnSplittingPower} and suppose, for the sake of contradiction, that $g^k$ had a dangerous long turn $\tau =\{\alpha, \beta\}$. By Lemma \ref{l:LongTurnSplittingPower}, $g_{\#}^k(\bar\alpha\beta)$ splits into legal paths and iNPs. Since $g^k$ admits no iNPs, $g_{\#}^k(\bar\alpha\beta)$ is legal, contradicting that $\tau$ is dangerous.
\qedhere
\end{proof}

\begin{df}[Legalizing train track maps]{\label{d:Legalizing}} We call a train track map $g \colon \Gamma \to \Gamma$ \emph{legalizing} if it has no dangerous long turns.
\end{df}

\begin{prop}{\label{p:LegalizingMaps}}
Suppose that $g$ is a PNP-free expanding irreducible train track map. Then there exists some $p>0$ so that $g^p$ is legalizing.
\end{prop}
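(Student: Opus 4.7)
The plan is to observe that Proposition \ref{p:LegalizingMaps} is essentially an immediate reformulation of Lemma \ref{l:DangerousLongTurnsDisappearUnderHighIteration} in the vocabulary of Definition \ref{d:Legalizing}. Concretely, applying Lemma \ref{l:DangerousLongTurnsDisappearUnderHighIteration} to the given expanding irreducible PNP-free train track map $g$ yields a positive integer $k$ such that $g^k$ has no dangerous long turns. By Definition \ref{d:Legalizing}, this is precisely the statement that $g^k$ is legalizing. Setting $p := k$ completes the proof.

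The small point to check before invoking Definition \ref{d:Legalizing} is that $g^p$ is itself an expanding irreducible train track map on $\Gamma$, so that the notion of ``legalizing'' applies. This is immediate from Definition \ref{d:Maps}: the train track property, that $(g^p)^n=g^{pn}$ be locally injective on edge interiors for all $n\ge 1$, is inherited from $g$; expansion $|g^n(e)|\to\infty$ passes to $g^p$; and the transition matrix of $g^p$ is the $p$-th power of that of $g$, hence still irreducible (indeed Perron-Frobenius, in the relevant setting). One should also note that $g$ being PNP-free is not itself needed at this stage — it was already consumed inside Lemma \ref{l:DangerousLongTurnsDisappearUnderHighIteration}.

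The real content of the proposition is therefore entirely contained in the preceding technical chain: Lemma \ref{l:PFinite} bounds $Pth_g$; Lemma \ref{l:LongTurnSplittingPower} uses this, together with the ``$K$-protection'' splitting principle, to produce a uniform power $g^k$ after which every long turn $\{\alpha,\beta\}$ has the property that $g^k_\#(\bar\alpha\beta)$ splits into legal pieces plus at most one iNP; and Lemma \ref{l:DangerousLongTurnsDisappearUnderHighIteration} then uses the PNP-free hypothesis to forbid the iNP, forcing $g^k_\#(\bar\alpha\beta)$ to be legal and hence every long turn to be non-dangerous.

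There is no substantive obstacle here — the proposition is a packaging statement that names, for later reference, the output of Lemma \ref{l:DangerousLongTurnsDisappearUnderHighIteration}. The purpose of stating it separately is presumably downstream: subsequent results will want to pass from a train track representative to a legalizing power, and it is cleaner to cite a named proposition about legalizing maps than to re-invoke the long turn terminology each time.
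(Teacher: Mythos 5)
Your proof is correct and matches the paper's, which simply cites Lemmas \ref{l:longinps} and \ref{l:DangerousLongTurnsDisappearUnderHighIteration}; the latter is indeed the one doing all the work, as you observe. Your side remarks (that $g^p$ inherits the relevant train track properties, and that the proposition is a packaging device for downstream use) are accurate and harmless additions.
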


\begin{proof}
This follows from Lemmas \ref{l:longinps} and \ref{l:DangerousLongTurnsDisappearUnderHighIteration}.
\qedhere
\end{proof}

\begin{prop}{\label{p:ConvenientReps}}
Suppose $\vphi$ is a lone axis fully irreducible outer automorphism. Then there is a fully stable transparent legalizing train track representative $g \colon \Gamma \to \Gamma$ of a power $\vphi^R$ of $\vphi$ so that all vertices of $\Gamma$ are principal and fixed, and all but one direction is fixed.
\end{prop}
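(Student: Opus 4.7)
The plan is to start with the train track representative produced by Proposition \ref{P:EveryVertexPrincipal} and pass to a suitable power, exploiting the fact that transparency, the legalizing property, PNP-freeness, and the fully stable condition are all either \emph{achieved} by, or \emph{preserved} under, taking further powers of the map, while the vertex/direction structure guaranteed by \ref{P:EveryVertexPrincipal} also persists under iteration.

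First I would invoke Proposition \ref{P:EveryVertexPrincipal} to obtain a train track representative $g_0 \colon \Gamma \to \Gamma$ of some power $\vphi^{R_0}$ with all vertices of $\Gamma$ principal and fixed, and all but one direction fixed. Since $\vphi$ is a lone axis fully irreducible, Remark \ref{noPNP} shows that $g_0$, and hence every positive power $g_0^k$, is PNP-free.

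Next, since every graph self-map has a transparent power (Definition \ref{d:GateStructures}), choose $s \geq 1$ with $g_0^s$ transparent; transparency is preserved under further powers because the image of a degenerate turn $\{d,d\}$ under any iterated direction map is again degenerate. Apply Proposition \ref{p:LegalizingMaps} to the PNP-free expanding irreducible map $g_0^s$ to obtain $p \geq 1$ with $g := g_0^{sp}$ legalizing. If a further power is needed to arrange the fully stable property as well, take it now; since the other properties persist under powers, nothing is lost. Set $R$ to be the corresponding exponent, so $g$ represents $\vphi^R$ and is simultaneously PNP-free, transparent, legalizing, and fully stable.

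To conclude, I would verify the vertex and direction structure persists. Every vertex fixed by $g_0$ is fixed by $g$, so all vertices of $\Gamma$ remain principal and fixed. Every direction fixed by $g_0$ is fixed by $g$; the unique non-fixed direction $d_0$ at some vertex $v$ satisfies $Dg_0(d_0) = d_1$ for some already-fixed direction $d_1 \neq d_0$, so $Dg(d_0) = d_1 \neq d_0$, and $d_0$ remains the unique non-fixed direction of $g$. The only substantive input to the argument is Proposition \ref{p:LegalizingMaps}; the rest amounts to routine bookkeeping about how the listed properties propagate under iteration, and I expect no genuine obstacle beyond unpacking the relevant definitions to confirm each is preserved.
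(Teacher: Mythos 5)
Your proposal is correct and matches the paper's approach exactly: the paper's proof is a one-line citation of Remark \ref{noPNP}, Proposition \ref{P:EveryVertexPrincipal}, and Proposition \ref{p:LegalizingMaps}, and your argument simply unpacks what that citation must mean, with the (correct) additional bookkeeping that transparency, PNP-freeness, the ``all vertices principal and fixed'' property, and the ``all but one direction fixed'' property all persist under passing to further powers.
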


\begin{proof}
This follows from Remark \ref{noPNP}, Proposition \ref{P:EveryVertexPrincipal}, and Proposition \ref{p:LegalizingMaps}.
\qedhere
\end{proof}

\begin{df}[Convenient train track maps]{\label{d:Convenient}} For a lone axis fully irreducible outer automorphism $\vphi$ we call a train track representative of a power $\vphi^R$ of $\vphi$ satisfying the properties of Proposition \ref{p:ConvenientReps} \emph{convenient}.
\end{df}

\section{Proof of the main result}{\label{s:Proof}}

\begin{lem}{\label{l:SkeletaSandwich}}
Let $r\ge 3$ be an integer, so that $n=3r-4$ is the dimension of $\os$.
For each lone axis fully irreducible $\vphi \in Out(F_r)$ there exists an integer $k \leq n$ so that $A_{\vphi} \subset \os^{(k)}\setminus \os^{(k-2)}$. In particular, all folds in $A_{\vphi}$ are proper full folds.
\end{lem}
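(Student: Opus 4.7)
The plan is to pass to a convenient train track representative $g\colon \Gamma\to\Gamma$ of a power $\vphi^R$ furnished by Proposition \ref{p:ConvenientReps}; since $A_{\vphi^R}=A_\vphi$ by Definition \ref{d:axes}, it suffices to analyze the Skora realization of $g$. Set $k=|E(\Gamma)|$. The simplex $\sigma_\Gamma$ containing $\Gamma$ has dimension $k-1$, so $A_\vphi$ passes through $\sigma_\Gamma$ and hence is automatically not contained in $\os^{(k-2)}$. An Euler-characteristic count shows $k\le n$: every vertex of $\Gamma$ is principal (so has valence $\ge 3$) and the vertex $v_0$ hosting the unique nonfixed direction $d_0$ has valence $\ge 4$, giving $2|E(\Gamma)|\ge 4+3(|V(\Gamma)|-1)=3(|E(\Gamma)|-r+1)+1$, whence $k=|E(\Gamma)|\le 3r-4=n$.

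The crux is then to prove that every fold in the Stallings decomposition of $g$ is a \emph{proper full fold}. Since every direction except $d_0$ is fixed, $g$ has a unique illegal turn $\{d_0,d_1\}$ at $v_0$, where $d_1=Dg(d_0)$, and each Stallings fold identifies initial segments of the two edges $e_0, e_1$ at $v_0$ in these directions. Granted this claim, in Skora's continuous realization of each fold the intermediate graph acquires one extra valence-$3$ vertex on the longer edge (and thereby one extra edge), so lies in a simplex of dimension exactly $k$; the endpoints of each fold lie in simplices of dimension $k-1$. Concatenating over a fundamental period then places every point of $A_\vphi$ in a simplex of dimension $k-1$ or $k$, so $A_\vphi\subset\os^{(k)}\setminus\os^{(k-2)}$. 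The final ``in particular'' conclusion is immediate: a partial fold would put a point of $A_\vphi$ in a dimension-$(k{+}1)$ simplex and a full fold would put one in a dimension-$(k{-}2)$ simplex, each contradicting the containment.

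The main obstacle is justifying the ``proper full fold'' claim. A full fold would strictly decrease the edge count; since the Stallings decomposition terminates at a graph homeomorphic to $\Gamma$, any full fold would have to be balanced by a partial fold elsewhere in the decomposition, so it suffices to exclude partial folds. I expect to argue that a partial fold would insert a new valence-$3$ vertex $x$ into $\Gamma_1$ that is not principal (and not fixed) under the residual map $g_1\colon \Gamma_1\to \Gamma$, and that iterating the Stallings process from $g_1$ then propagates this failure: any subsequent fold either introduces further non-principal vertices or, if it attempts a full fold to remove $x$, must pass through a dimension-$(k{+}1)$ intermediate simplex. Either outcome contradicts the fully stable, transparent, and legalizing structure of $g$ guaranteed by Proposition \ref{p:ConvenientReps}, which forces the Stallings decomposition to remain within the skeleton of $\Gamma$'s simplex and its immediate cofaces. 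This step is where the bulk of the technical work lies.
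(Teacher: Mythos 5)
Your setup matches the paper's: pass to a convenient representative $g\colon\Gamma\to\Gamma$ via Proposition~\ref{p:ConvenientReps}, take $k=|E(\Gamma)|$, note the axis is the Skora realization of the Stallings decomposition of $g$, and bound $k\le n$ by the Euler-characteristic count (which the paper leaves implicit; your estimate using valence $\ge 4$ at the vertex carrying the nonfixed direction is correct). The gap you flag at the end is real, but you chose the harder direction to attack it. You correctly observe that periodicity of the fold sequence forces the number of full folds to equal the number of partial folds (each changes the edge count by $\mp1$; proper full folds leave it alone). The paper uses the \emph{same} balance, but runs it the other way: it excludes \emph{full} folds directly, which is essentially immediate, and then partial folds vanish for free. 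A full fold identifies the terminal endpoints of the two folded edges, so two distinct vertices of $\Gamma$ would end up with the same image under $g$; but $g$ is convenient, so every vertex of $\Gamma$ is $g$-fixed, and $g$ cannot identify two fixed vertices. Hence no full folds, and by your own edge-count equality no partial folds either, so every fold is proper full.

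Your proposed route of excluding partial folds directly runs into a genuine obstacle that the other direction sidesteps: the new valence-$3$ vertex $x$ created by a partial fold lives in the intermediate graph $\Gamma_1$, not in $\Gamma$, and ``principal'' is a property of the periodic self-map $g\colon\Gamma\to\Gamma$, not of the partial composition $\Gamma_1\to\Gamma$. So ``$x$ is not principal'' is not by itself in tension with convenience of $g$, and tracking $x$ through the remaining folds to extract a contradiction (as you anticipate) is where your argument stalls. Reversing the order --- kill full folds via vertex-fixedness of the convenient representative, then kill partial folds by your edge-count balance --- closes the gap with tools you already wrote down.
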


\begin{proof}
Let $g \colon \Gamma \to \Gamma$ be a convenient train track map  representing $\vphi$, guaranteed by Proposition \ref{p:ConvenientReps}.
Let $k=|E(\Gamma)|$, i.e. $1+$ the dimension of the open simplex containing $\Gamma$. The fold line $A_\vphi$ is a periodic fold line for a Stallings fold decomposition of $g$.
It suffices to show that each fold of $A_{\vphi}$ is a proper full fold.
But, if one of the folds were full, then some vertex of $\Gamma$ would not be $g$-periodic, hence not principal. This contradicts that $g$ is convenient.
\qedhere
\end{proof}

If $\mathfrak{g}$ is a Stallings fold decomposition of $g$ we denote by $\mathfrak{g}^p$ the Stallings fold sequence obtained by juxtaposing $p$ copies of $\mathfrak{g}$. Note that $\mathfrak{g}^p$ is a decomposition of $g^p$.
In the next lemma we need not assume the outer automorphism represented by $h$ is fully irreducible.

\begin{lem}{\label{l:Power}}
Let $r\ge 3$ be an integer, so that $n=3r-4$ is the dimension of $\os$, and let $2\le k \leq n$.
Suppose that $\mL \subset \os^{(k)}\setminus \os^{(k-2)}$ is the periodic fold line for a Stallings fold decomposition $\mathfrak{g}$ of a train track map $g$. Then there exist constants $R,\veps>0$ so that:
For each fold line $\mL' \in B^k(\mL,R,\veps)$ that is the periodic fold line corresponding to a Stallings fold decomposition $\mathfrak{h}$ of some train track map $h$, there exists a power $p$ so that $\mathfrak{h}^p$ contains $\mathfrak{g}$.

In particular, $h$ and $g$ are self-maps of the same topological graph $\Gamma$.
\end{lem}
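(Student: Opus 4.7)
The plan is to force the fold-line geometry of $\mL'$ to mirror that of $\mL$ over a subsegment long enough to contain a full period of $\mathfrak g$, thereby identifying $\mathfrak g$ as a consecutive block of folds in $\mathfrak h$. The argument underlying Lemma \ref{l:SkeletaSandwich} shows that the hypothesis $\mL\subset \os^{(k)}\setminus \os^{(k-2)}$ forces $\mL$ to visit a cyclically periodic sequence of open simplices of dimension exactly $k-1$ (at the times $t_i$ separating consecutive folds), transitioning through adjacent open simplices of dimension $k$ via proper full folds only; in particular, every fold in $\mathfrak g$ is a proper full fold.

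First I would choose the constants. Let $T=\log\lam_g$ be the fundamental period of $\mL$ and let $\mathfrak g = g_K\circ\cdots\circ g_1$. The compact subsegment $\mL|_{[-T,2T]}$ meets only finitely many open simplices, each of dimension $k-1$ or $k$, and has a uniform positive simplicial distance from every open simplex it does not meet (including all of $\os^{(k-2)}$). By Remark \ref{r:TopologyOnSpaceFoldLines}, symmetrized Lipschitz neighborhoods of a compact geodesic subsegment are comparable to simplicial ones, so I would choose $\veps>0$ small enough that $N(\mL|_{[-T,2T]},\veps)$ lies inside the union of the open simplices actually visited by $\mL|_{[-T,2T]}$. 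Then I would take $R>T+2\veps$, so that for any $\mL'\in B^k(\mL,R,\veps)$, the endpoints of the length-$R$ subsegment $\beta\subset\mL'\cap N(\mL,\veps)$ are within $\veps$ of points of $\mL$ at Lipschitz distance strictly greater than $T$, ensuring that $\beta$ tracks at least one full period of $\mL$.

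For such $\mL'$, the subsegment $\beta$ then visits only open simplices from the visited set of $\mL|_{[-T,2T]}$. The hypothesis $\mL'\subset\os^{(k)}$ prevents $\mL'$ from entering a dimension-$(k+1)$ simplex during an in-progress fold, so the same dimension-counting argument as for $\mL$ forces $\beta$ to visit only $(k-1)$-dimensional simplices at its vertex times and to transition via proper full folds. The main obstacle is the rigidity step of showing that the ordered sequence of open simplices visited by $\beta$ matches the corresponding sequence for $\mL$; for this I would argue that at each vertex time, the next fold performed by $\mL'$ is determined by the codimension-one face of the current $(k-1)$-simplex through which $\mL'$ exits, and that this exit face must agree with the one $\mL$ exits through, since otherwise the two lines would pass into distinct adjacent $(k-1)$-simplices and violate the $\veps$-closeness. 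With the simplex sequences matched over at least one period of $\mL$, the fold sequences match, so $\mathfrak g$ appears as a consecutive block of folds in the periodic fold sequence of $\mathfrak h$ (possibly with a cyclic offset). Taking $p$ large enough to absorb the offset yields $\mathfrak h^p\supset\mathfrak g$; since $\Gamma$ is the intermediate graph at the start of the embedded copy of $\mathfrak g$ in $\mathfrak h^p$, the graph $\Gamma$ appears in the sequence of intermediate graphs of $\mathfrak h$, giving the ``in particular'' conclusion after a cyclic rotation of the Stallings decomposition of $h$.
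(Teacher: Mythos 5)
Your plan is the same as the paper's: shrink $\veps$ and enlarge $R$ so that any periodic $\mL'\in B^k(\mL,R,\veps)$ is forced to share the simplex sequence of $\mL$ over a long stretch, hence a full $\mathfrak g$-block; then take $p$ large. However, two of your constant choices do not work as stated. The $\veps$ you propose does not exist: you want $N(\mL|_{[-T,2T]},\veps)$ (even after intersecting with $\os^{(k)}$) to be contained in the union of the open simplices that $\mL$ actually visits, but whenever $\mL$ passes through a $(k-1)$-simplex $\sigma$, the $k$-skeleton contains other open $k$-simplices with $\sigma$ in their closure besides the two that $\mL$ enters, and every metric neighborhood of $\sigma$ in $\os^{(k)}$ meets them. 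What is achievable (and is what the paper uses) is only the weaker containment $N(\mL,\veps_0)\cap\os^{(k)}\subset\os^{(k)}\setminus\os^{(k-2)}$; one must then argue separately that a geodesic segment confined to this set and $\veps$-close to $\mL$ follows the same simplex sequence. Your ``exit face'' discussion is a reasonable attempt at that missing step (which the paper asserts without proof), but the geometry is off: at a vertex time $\mL$ sits in a $(k-1)$-simplex $\sigma$ and the next fold corresponds to entering one of the $k$-simplices having $\sigma$ in its closure, not to crossing a codimension-one face of $\sigma$, since such a face is a $(k-2)$-simplex and is excluded from $\os^{(k)}\setminus\os^{(k-2)}$.

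Separately, $R>T+2\veps$ is too small. Tracking a subsegment of $\mL$ of length slightly more than one period $T$ only recovers a cyclic rotation $g_{j+1},\dots,g_K,g_1,\dots,g_{j'}$ of $\mathfrak g$ with $j'<K$, and ``taking $p$ large enough to absorb the offset'' does not repair this: in $\mathfrak h^p$ the segments $g_1,\dots,g_{j'}$ and $g_{j+1},\dots,g_K$ coming from the tracked block may be separated by other folds of $\mathfrak h$, so $g_1,\dots,g_K$ need not appear consecutively. The paper takes $R$ equal to three $\mathfrak g$-period lengths with $\veps\le R/6$, so the tracked subsegment of $\mL$ has length at least $2T$ and therefore contains a genuine $\mathfrak g$-segment, one starting and ending at period boundaries.
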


\begin{proof}
Let $R$ be three times the length of a $\mathfrak{g}$-segment of $\mL$. Since $\mL$ is periodic and contained in $\os^{(k)}\setminus \os^{(k-2)}$, there exists some $\veps_0>0$ such that $N(\mL,\veps_0) \cap \os^{(k)} \subset \os^{(k)}\setminus \os^{(k-2)}$. Therefore, for any $0<\veps \le \veps_0$, any geodesic segment $\gamma$ of length $R$ contained in $N(\mL,\veps)$ passes through the same sequence of simplices as a subsegment of $\mL$ of length $\geq R-2\veps$, and hence shares a fold sequence with this subsegment of $\mL$.
Choose $\veps=min\{\veps_0,\frac{R}{6}\}$. Then any subsegment of $\mL$ of length $\geq R-2\veps \geq \frac{2}{3}R$ contains twice the length of a $\mathfrak{g}$-segment of $\mL$ so must contain a $\mathfrak{g}$-segment of $\mL$. Hence, any periodic fold line $\mL' \in B^k(\mL,R,\veps)$ will in fact contain the full fold sequences $\mathfrak{g}$.
We can now take the power $p$ of $\mathfrak{h}$ high enough so that $\mathfrak{h}^p$ contains any length-$R$ subsegment of $\mL'$ and the conclusion of the theorem will hold.
\qedhere
\end{proof}

\begin{lem}{\label{l:Neighborhood}}
Let $r\ge 3$ be an integer, so that $n=3r-4$ is the dimension of $\os$, and let $2\le k \leq n$.
Suppose $\vphi$ is a dominant lone axis fully irreducible with axis $A_{\vphi} \subset \os^{(k)} \setminus \os^{(k-2)}$.
Then there exist constants $R,\veps>0$ and a convenient train track representative $g\from \Gamma \to \Gamma$ of a power $\vphi^p$ of $\vphi$ so that for each periodic fold line $\mL\in B^k(A_\vphi,R,\veps)$ there exist
and a self-map $h$ on $\Gamma$ with a Stallings fold decomposition yielding $\mL$ and such that:
\begin{enumerate}[label=(\alph*)]
\item $h \from \Gamma \to \Gamma$ is a train track map.
\item $h$ does not admit a PNP.
\item The transition matrix for $h$ is Perron-Frobenius.
\item $\displaystyle \bigcup_{v \in V(\Gamma)} LW(g,v) =  \bigcup_{v \in V(\Gamma)} LW(h,v)$.
\item If the vertex $w$ of $\Gamma$ is $h$-periodic then $SW(h,w) = SW(g,w)$.
\item If $\mL$ contains no proper full fold, then all vertices of $\Gamma$ are principal with respect to both $g$ and $h$ and $\displaystyle \bigcup_{v \in V(\Gamma)} SW(g,v) = \bigcup_{v \in V(\Gamma)} SW(h,v)$.
\end{enumerate}
\end{lem}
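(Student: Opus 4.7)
The plan is to first invoke Proposition~\ref{p:ConvenientReps} to fix a convenient train track representative $g\colon\Gamma\to\Gamma$ of some power $\vphi^p$. Lemma~\ref{l:SkeletaSandwich} places $A_\vphi$ in $\os^{(k)}\setminus\os^{(k-2)}$ for $k=|E(\Gamma)|$, and feeding the Stallings fold decomposition $\mathfrak{g}$ of $g$ into Lemma~\ref{l:Power} produces the desired $R,\veps>0$. For each periodic $\mL\in B^k(A_\vphi,R,\veps)$, the same lemma forces the train track representative $h$ of $\mL$ to be a self-map of $\Gamma$, and, for some power $q$, the Stallings decomposition $\mathfrak{h}^q$ contains $\mathfrak{g}$ as a consecutive subsequence. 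This yields a factorization $h^q=\alpha\circ g\circ\beta$ as self-maps of $\Gamma$, with $\alpha,\beta$ compositions of Stallings folds of~$h$. Part~(a) is then immediate from the definition of a periodic fold line. For~(c), the convenient $g$ has a Perron-Frobenius transition matrix, so some $g^N$ is strictly positive; since $h^{qN}$ has $g^N$ appearing (iteratively) in its fold decomposition and the homotopy equivalences $\alpha,\beta$ induce transition matrices with no zero rows or columns, a high enough power of $h$ becomes strictly positive, giving (c).

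\textbf{Part (b), the main obstacle.} Suppose for contradiction that $h$ admits a PNP. By Lemma~\ref{l:iNP} there is then an iNP $\rho=\bar\rho_1\rho_2$, and by Lemma~\ref{l:longinps} the long turn $\{\rho_1,\rho_2\}$ is dangerous for every power of $h$, including $h^q$. The strategy is to transport this long turn through the factorization $h^q=\alpha\circ g\circ\beta$. Because Stallings folds of $h$ preserve $h$-legality, the images $\beta_\#(\rho_i)$ remain legal paths in~$\Gamma$ with respect to the structure of the remaining map $\alpha\circ g$. A direct check using $T(\alpha g)=T\alpha\circ Tg$ shows every $g$-illegal turn is also $\alpha g$-illegal, so the $\alpha g$-legal paths $\beta_\#(\rho_1),\beta_\#(\rho_2)$ are $g$-legal and form a $g$-long turn at $\beta(v)$. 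Because $g$ is convenient and hence legalizing, this long turn is not dangerous for~$g$, so $g_\#(\beta_\#(\bar\rho_1\rho_2))$ is $g$-legal. Pushing this legal path forward through the remaining folds~$\alpha$ and reinterpreting in the $h$-train-track structure yields that $(h^q)_\#(\bar\rho_1\rho_2)$ is $h$-legal, contradicting dangerousness. The delicate step I expect to require the most care is the final legality transport through $\alpha_\#$; it should rest on $\alpha$ being a composition of Stallings folds of~$h$, so it introduces no cancellations not already encoded in the $h$-train-track structure.

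\textbf{Parts (d), (e), and (f).} For~(d), both unions of local Whitehead graphs collect precisely the turns taken by iterates of the maps along the periodic fold line; the geometric overlap of $\mL$ and $A_\vphi$ along every $\mathfrak{g}$-segment appearing inside $\mathfrak{h}^q$ (and, symmetrically, along $\mathfrak{h}$-segments appearing inside iterates of $\mathfrak{g}$, since both fold lines are periodic in the same top simplex) forces these collections to agree. For~(e), fix an $h$-periodic vertex $w$. Since the convenient $g$ fixes every vertex and all but one direction per vertex, the presence of $\mathfrak{g}$ inside $\mathfrak{h}^q$ forces the $h$-periodic directions at $w$ to coincide with the $g$-fixed directions there; combined with~(d) this identifies $SW(h,w)$ with $SW(g,w)$. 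For~(f), the hypothesis that $\mathfrak{h}$ contains no proper full folds rules out any identification of vertices of~$\Gamma$ during the folding process, so every vertex of $\Gamma$ remains $h$-periodic and principal; applying~(e) at each such vertex yields $\bigcup_v SW(h,v)=\bigcup_v SW(g,v)$, completing the proof.
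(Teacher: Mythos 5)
Your overall scaffolding is right: fix a convenient representative $g$ via Proposition~\ref{p:ConvenientReps}, invoke Lemma~\ref{l:SkeletaSandwich} and Lemma~\ref{l:Power} to get $R,\veps$ and force any nearby periodic fold line to have a Stallings decomposition containing $\mathfrak{g}$, then verify (a)--(f). But there is a decisive structural difference from the paper: you keep the train track map $h$ ``as given'' and factor $h^q = \alpha\circ g\circ\beta$ with $g$ in the interior of the composition, whereas the paper applies Lemma~\ref{l:Power} with $g^3$ and then passes to the cyclic permutation $h = g\circ f\circ g^2$, so that $g$ sits at \emph{both ends}. The paper states explicitly that this is ``the trickiest aspect of this proof, and the reason to use $g^3$ instead of $g$,'' and it is exactly where your argument has gaps.

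For (b), you correctly flag the ``legality transport through $\alpha_\#$'' as the delicate step, but the heuristic you offer does not close it. After showing $g_\#(\beta_\#(\bar\rho_1\rho_2))$ is $g$-legal you still need that $\alpha_\#$ (and arbitrary further iterates of $h$) never create backtracking at its turns. The paper's mechanism is: after replacing $g$ by a power so that every $LW(g)$-turn is taken by $g(e)$ for every edge $e$, the turns of $g(\bar\alpha'\beta')$ are all $g$-taken, hence all appear in $g^2(\gamma)$ for a witness loop $\gamma$; and since $\gamma$ is a witness, $h^{p}\circ g\circ f(g^2(\gamma))$ stays immersed for every $p$, so those turns remain legal under the rest of the line. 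This needs $g$ to be the \emph{outer} (last-applied) map in $h$, which your factorization does not provide; ``$\alpha$ is a composition of Stallings folds of $h$'' is not enough, since $\alpha$ can legitimately fold turns that are $g$-legal but $h$-illegal.

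For (d), the inclusion $\bigcup LW(h,v)\subseteq\bigcup LW(g,v)$ is the hard direction, and your argument for it rests on ``$\mathfrak{h}$-segments appearing inside iterates of $\mathfrak{g}$,'' which is simply not available: Lemma~\ref{l:Power} gives you $\mathfrak{g}$ inside $\mathfrak{h}^q$, not the converse. The paper's proof of this direction again uses $g$ being last-applied: any $h^p$-image of an edge is of the form $g(\alpha)$ for an immersed $g$-legal path $\alpha$, so it can only cross turns that are $g$-taken or in $\mathrm{Im}(Tg)$, and $d\notin\mathrm{Im}(Dg)$ rules out the only candidate extra turn. For (e), the claim that the presence of $\mathfrak{g}$ inside $\mathfrak{h}^q$ forces the $h$-periodic directions at $w$ to coincide with the $g$-fixed directions is unsubstantiated; the paper instead deduces injectivity of $Dh$ at each $h$-periodic vertex from (d) together with $h$ being a train track map, and handles the exceptional vertex $v$ by a separate counting argument using $d\notin\mathrm{Im}(Dg)$. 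Your (a), (c), (f) are essentially fine (and for (a), since you do not cyclically permute, it is indeed immediate — but this is also why (b), (d), (e) no longer go through). To repair the argument you should adopt the paper's cyclic permutation, re-verify that the permuted $h$ is a train track map as the paper does, and then feed the witness loop from Lemma~\ref{l:WitnessLoops} into (b) and (d).
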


\begin{proof}
Since $\vphi$ is a lone axis fully irreducible, by Proposition \ref{P:EveryVertexPrincipal}, there exists a rotationless power $\vphi^p$ of $\vphi$ with a convenient train track representative $g \colon \Gamma \to \Gamma$. We call the nonfixed direction $d$. Since $\vphi$, hence $\vphi^p$, is a lone axis fully irreducible, $A_{\vphi}$ is the unique periodic fold line for $g$ and is formed by iterating the fold sequence $\mathfrak{g}$ for $g$. Replace $g$ with a power so that each turn in $LW(g)$ is taken by $g(e)$ for each edge $e$.

Notice that $A_{\vphi}$ is also the periodic fold line for $g^3$ and that $\mathfrak{g}^3$ is the fold sequence for $g^3$. Applying Lemma \ref{l:Power}, there exist $R,\veps>0$ so that for any periodic fold line $\mL \in B^k(A_\vphi,R,\veps)$ corresponding to a train track map $h'$ and fold sequence $\mathfrak{h}'$ of $h'$, there exists a power $p$ such that $(\mathfrak{h}')^{p}$ contains $\mathfrak{g}^3$. Thus, replacing $h$ with this power and possibly applying a cyclic permutation, $h'$ factors as $h' = f \circ g^3$, see (\ref{e:L'}).

The trickiest aspect of this proof, and the reason to use $g^3$ instead of $g$, is to prove item (b). We will show all items for the cyclic permutation $h = g \circ f \circ g^2$ of $h'$.

\begin{equation}{\label{e:L'}}
\xymatrix{\Gamma \ar[r]_{g} \ar@/_2pc/[rrrr]_{h} & \Gamma \ar[r]_{g} & \Gamma \ar[r]_{f}  & \Gamma   \ar[r]_{g}  \ar@/^2pc/[rrrr]^{h'}&
 \Gamma  \ar[r]_{g}  & \Gamma  \ar[r]_{g}  & \Gamma  \ar[r]_{f}  & \Gamma  \\}
\end{equation}

We first show (a). Suppose $h$ is not a train track map, i.e. $h^{p}(e)$ contains a backtracking segment for some $e\in E(\Gamma)$ and power $p$.
We parametrize $\mL \from \RR \to \os$ so that the graphs appearing in (\ref{e:L'}) are $\mL(0), \mL(1), \mL(2), \dots$ respectively.
Let $\gamma \in F_r$ be the witness guaranteed by Lemma \ref{l:WitnessLoops}, i.e. $\gamma_t \in \mL(t)$ is legal for all $t \geq 0$. Note that $\gamma_4 = g(\gamma_3)$, and since $g$ maps each edge onto the entire graph, $\gamma_4$ contains $e$. Thus, $e$ cannot be $h$-legal, a contradiction.

We now show (b). Recall that $g$ is convenient, hence if $\{\al,\beta\}$ is a long turn then either $g(\al)$ is an initial subsegment of $g(\beta)$ or $g_\#(\bar \al \beta) = \bar\al' \beta'$ is legal, where $\al', \beta'$ are nontrivial terminal subsegments of $\al,\beta$. In the second case all turns of $g(\bar \al' \beta')$ are $g$-taken (since $\{Dg\al',Dg\beta'\}$ cannot contain $d$ so it, too, is $g$-taken). Notice that each $g$-taken turn is $h^{p} \circ g \circ f$-legal since for any witness loop $\gamma$, $g^2(\gamma)$ maps over all $g$-taken turns and  $h^{p} \circ g \circ f(g^2(\gamma))$ is legal. Concluding, we get that the path $g(\bar \al' \beta')$ is legal with respect to $h^{p} \circ g \circ f$ for each $p$.
Now if $\rho = \bar \al \beta$ is an iPNP, then for some $p$, $h^{p}_\#(\rho) = \rho$, which is illegal. But $h^{p}(\bar \al \beta) =  h^{p-1} \circ g \circ f \circ g ( g_\#(\bar\al\beta)) =  h^{p-1} \circ g \circ f( g(\bar\al'\beta'))$, which is legal. We get a contradiction to the fact that $\rho$ is an iPNP.

To prove (c) recall that each edge of $\Gamma$ maps onto $\Gamma$ under the map $g$. Since $h = g \circ (f \circ g^2)$ is a train track map, the same is true for $h$. Thus the transition matrix of $h$ is PF.

To prove (d), recall that $g(e)$ contains all turns in each local Whitehead graph. Note also that for any witness loop $\gamma$ for $\mL$, $h(\gamma)$ contains all $g$-taken turns. Thus $LW(h,w) \supset LW(g,w)$. Let $d$ be the unique $g$-nonperiodic direction, and let $v$ be its initial vertex. Note that $\cup_{w \in \Gamma} LW(g,w)$ contains all turns not involving $d$. Thus, if $\cup LW(h,w) \setminus \cup LW(g,w) \neq \emptyset$ then $\{d,d'\} \in LW(h,v) \setminus LW(g,v)$ for some $d' \neq d$. Therefore, there exists an edge $e$ and a natural $p$ so that $h^p(e)$ crosses $\tau =\{d,d'\}$. Denoting $\al = f \circ g^2 \circ h^{p-1}(e)$ (which is an immersed $g$-legal path) we have $h^p(e) = g(\al)$ contains $\tau$. But $\tau$ is not $g$-taken and not in $Im(Tg)$, since $d \notin Im(Dg)$, a contradiction. So $LW(h,w) = LW(g,w)$ for each $w \in \Gamma$.

To prove (e) we again denote by $v$ the initial vertex of the direction $d$ that is $g$-nonperiodic. First let $w \neq v$ be $h$-periodic. Since all turns at $w$ are $h$-taken, $Dh$ is injective on the directions at $w$, hence all directions at $w$ are periodic and $SW(h,w) = LW(h,w) = LW(g,w) = SW(g,w)$.
For $v$, since $LW(v,g) = LW(v,h)$, we have that $LW(v,h)$ contains a complete graph $C$ on $deg(v)-1$ vertices (all directions except $d$). $Dh$ is injective on $C$, since otherwise a taken turn would be illegal. If $v$ is not $h$-periodic then there is nothing to prove, so we assume that $v$ is $h$-periodic. Let $h^p$ be a rotationless power of $h$. Then $Dh^p \from LW(h,v) \to SW(h,v)$ sends $C$ to an isomorphic graph. Moreover, we know that $deg(v) = V(C)+1$ and that $d$ is in an $h$-illegal turn, so $SW(h,v)$ cannot contain more than $V(C)$ vertices. Hence, $SW(h,v) \cong C \cong SW(g,v)$.

To prove (f) note that the containment in (e) is proper if and only if not all vertices are principal. Since the local Whitehead graphs contain all edges without $d$, this happens only when for some $w \neq u \in V(\Gamma)$ we have $h^p(w) = h^p(u)$. When this happens, some fold in the fold sequence does not restrict to an injective map on the vertices. This implies that the fold is full. Therefore, if no fold in the fold sequence of $\mL$ is full, then all vertices of $\Gamma$ are $h$-principal, and the stable Whitehead graphs of $h$ and $g$ are identical.
\qedhere
\end{proof}

The basin of any dominant stratum has the following ``rigidity'' properties:

\begin{mainthmA}{\label{T:MainTheorem1}}
Let $r\ge 3$ and let $\mathcal G$ be an $r$-dominant graph. Let $\mL \in \mathcal S^r(\mathcal G)$. Then there exist $0\le k\le 3r-4$ and a neighborhood $U\subseteq \periodiclines$ of $\mL$ with the following properties:

\begin{itemize}
\item[(a)]For each $\mL'\in U$ with $\mL'\subseteq \os^{(k)}$, we have $\mL'\in B\mathcal S^r(\mathcal G)$.
\item[(b)] For each $\mL'\in U$ with $\mL'\subseteq \os^{(k)}$ and with $\mL'$ containing no full folds, we have $\mL'\in \mathcal S^r(\mathcal G)$.
\end{itemize}
\end{mainthmA}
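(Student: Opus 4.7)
The plan is to combine Lemmas~\ref{l:SkeletaSandwich} and~\ref{l:Neighborhood} with the Full Irreducibility Criterion (Proposition~\ref{prop:FIC}) to produce the required neighborhood. By hypothesis $\mL$ is the axis of a dominant fully irreducible $\vphi \in Out(F_r)$ with $IW(\vphi) \cong \mG$, and by Theorem~\ref{t:uniqueaxis} $\vphi$ is a lone axis ageometric fully irreducible. First apply Lemma~\ref{l:SkeletaSandwich} to obtain an integer $k$ with $\mL = A_\vphi \subseteq \os^{(k)}\setminus \os^{(k-2)}$; because any graph representing $F_r$ has at least $r \geq 3$ edges we have $k \geq 3$, which is the lower bound needed to invoke Lemma~\ref{l:Neighborhood}. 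Then apply Lemma~\ref{l:Neighborhood} to produce constants $R, \veps > 0$ and a convenient train track representative $g \colon \Gamma \to \Gamma$ of some power $\vphi^p$ enjoying properties (a)--(f). Set $U := B(\mL, R, \veps) \cap \periodiclines$.

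Now fix $\mL' \in U$ with $\mL' \subseteq \os^{(k)}$, so $\mL' \in B^k(\mL, R, \veps)$. Lemma~\ref{l:Neighborhood} supplies a self-map $h \colon \Gamma \to \Gamma$ whose Stallings fold decomposition determines $\mL'$ and satisfies (a)--(f). From (a)--(c), $h$ is a PNP-free train track map with Perron-Frobenius transition matrix. Full irreducibility of $\vphi$ ensures each $LW(g,v)$ is connected, so by (d) each $LW(h,v)$ is also connected. Proposition~\ref{prop:FIC} therefore produces an ageometric fully irreducible $\psi \in Out(F_r)$ represented by $h$.

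To identify $IW(\psi)$, recall that since $h$ is PNP-free, $IW(\psi) \cong \bigsqcup_w SW(h,w)$, the disjoint union taken over $h$-principal vertices $w$ of $\Gamma$. Lemma~\ref{l:Neighborhood}(e) gives $SW(h,w) = SW(g,w)$ for every $h$-periodic vertex $w$, and since $g$ is convenient each $SW(g,w)$ is a component of $IW(\vphi) \cong \mG$, hence a complete graph on at least three vertices. Consequently every $h$-periodic vertex of $\Gamma$ is automatically $h$-principal, and $IW(\psi)$ is the disjoint union of those components of $\mG$ coming from vertices that survive as $h$-periodic. This shows $\mL' \in B\mS_r(\mG)$, proving (a). If in addition $\mL'$ contains no full folds, Lemma~\ref{l:Neighborhood}(f) ensures that every vertex of $\Gamma$ is $h$-principal and that the total stable Whitehead graph of $h$ coincides with that of $g$, so $IW(\psi) \cong \mG$ and $\mL' \in \mS_r(\mG)$, proving (b).

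The substantive work is absorbed into Lemma~\ref{l:Neighborhood}; Theorem~A itself reduces to bookkeeping. The main conceptual point is the observation that vertex identifications during folding are the sole mechanism by which components of $\mG$ can be lost from $IW(\psi)$, and that such identifications require full folds---precisely the content of item (f). Ruling out full folds therefore recovers $\mG$ exactly, and without that hypothesis the best one can say is that $IW(\psi)$ is a union of some subset of the components of $\mG$.
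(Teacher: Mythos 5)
Your proof is correct and follows essentially the same route as the paper: obtain $k$ from Lemma~\ref{l:SkeletaSandwich}, feed it and the axis into Lemma~\ref{l:Neighborhood} to get the neighborhood and the train track map $h$ with properties (a)--(f), conclude full irreducibility via the FIC, and identify $IW(\psi)$ as a union of components of $IW(\vphi)$ from (e) for part (a) and as all of $IW(\vphi)$ from (f) for part (b). The only differences are cosmetic: you make the construction of $U$ and the bound $k\ge 3\ge 2$ explicit, and you add the (correct but inessential) observation that every $h$-periodic vertex is automatically $h$-principal because its stable Whitehead graph matches a component of the dominant graph $\mG$, which has at least three vertices.
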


\begin{proof}
Using Lemma \ref{l:Neighborhood}, we have that $\mL$ is a periodic fold line for a Stallings decomposition of the train track map $h$ with no PNPs and with connected local Whitehead graphs. By the FIC (Proposition~\ref{prop:FIC}) we get that the outer automorphisms $\vphi'$ represented by $h$ is ageometric fully irreducible.
Let $W \subset V(\Gamma)$ be the set of $h$-principal vertices of $\Gamma$. Then by Lemma \ref{l:Neighborhood}(e),
\[ IW(\vphi') = \bigcup_{v \in W} SW(h,v) \subset   \bigcup_{v \in V(\Gamma)} SW(g,v) = IW(\vphi) \]
and the unions are disjoint. Thus, $IW(\vphi')$ is a union of components of $IW(\vphi)$.

We prove (b). By Proposition \ref{l:Neighborhood}(f), all vertices are $h$-principal, hence $IW(\vphi') = \bigcup_{v \in V(\Gamma)} SW(h,v) = \bigcup_{v \in V(\Gamma)} SW(g,v) = IW(\vphi)$.
\qedhere
\end{proof}

\begin{cor} Suppose $\mL \in \mP_r$. Then there exists an open neighborhood $U$ of $\mL$ so that:
\begin{itemize}
\item[(a)] $U \cap \periodiclines \subset B\mP_r$ and
\item[(b)] each periodic fold line in $U$ containing no proper folds is contained in $\mP_r$.
\end{itemize}
\end{cor}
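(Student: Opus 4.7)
The plan is to deduce this corollary as a direct specialization of Theorem~A to the case $\mG=\pgraph$, after observing that for a principal axis the skeleton-dimension constraint in Theorem~A becomes vacuous.

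First I unpack the hypothesis. To say $\mL\in\mP_r=\mS_r(\pgraph)$ is to say $\mL=A_\vphi$ for some principal fully irreducible $\vphi\in\out$, meaning $IW(\vphi)\cong\pgraph$. By Proposition~\ref{p:ConvenientReps}, a suitable power $\vphi^R$ admits a convenient train track representative $g\from\Gamma\to\Gamma$ in which every vertex of $\Gamma$ is principal and all but one direction is fixed. Each component of $\pgraph$ is a triangle, so each principal vertex of $\Gamma$ carries exactly three periodic directions. Combined with the single nonfixed direction, the total number of directions in $\Gamma$ is $3|V(\Gamma)|+1=2|E(\Gamma)|$, and from the free-group rank equation $|V(\Gamma)|-|E(\Gamma)|=1-r$ one concludes $|E(\Gamma)|=3r-4$.

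Next, invoking Lemma~\ref{l:SkeletaSandwich} applied to $\vphi$, the integer produced is $k=|E(\Gamma)|=3r-4=\dim\os$; hence $\os^{(k)}=\os$ and the clause ``$\mL'\subseteq\os^{(k)}$'' appearing in both conclusions of Theorem~A is automatically satisfied for every periodic fold line $\mL'$. Now invoke Theorem~A with $\mG=\pgraph$ at the line $\mL$ to obtain a neighborhood $U_0\subseteq\periodiclines$ of $\mL$; by the definition of the subspace topology on $\periodiclines\subseteq\mF_r$, one can choose an open set $U\subseteq\mF_r$ with $U\cap\periodiclines=U_0$.

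For (a), any $\mL'\in U\cap\periodiclines=U_0$ belongs to $B\mS_r(\pgraph)=B\mP_r$ by Theorem~A(a), the skeleton hypothesis being vacuous. For (b), interpreting ``no proper folds'' as ``no full folds'' in the sense of Definition~\ref{d:Folds} (consistent with the phrasing of Theorem~A(b) and with Lemma~\ref{l:SkeletaSandwich}, which establishes that the folds along a lone axis are precisely the \emph{proper full} folds, and that it is the non-proper, i.e.\ \emph{full}, folds that cause vertex identifications), Theorem~A(b) gives $\mL'\in\mS_r(\pgraph)=\mP_r$. There is no substantive obstacle beyond noticing the dimension coincidence: the principal case is exactly the regime in which the skeleton restriction in Theorem~A drops out for free, and the only minor bookkeeping is the passage from a neighborhood in $\periodiclines$ to an open set in $\mF_r$ of which it is the trace.
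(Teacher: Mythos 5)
Your argument is correct and is exactly the intended specialization of Theorem~A: the only nonobvious point is that for a principal axis the Lemma~\ref{l:SkeletaSandwich} constant is $k=|E(\Gamma)|=3r-4=\dim\os$ (which you establish by the direction count; the paper instead asserts via the remark following Definition~\ref{d:PrincipalStratum} that a principal axis meets the interior of a top-dimensional simplex), so that the skeleton clause in Theorem~A drops out. You also correctly read ``no proper folds'' in part (b) as ``no full folds'' to match the hypothesis of Theorem~A(b) and Lemma~\ref{l:Neighborhood}(f), which is the right reading since it is exactly the (non-proper) full folds that identify vertices.
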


\section{Examples}{\label{s:Examples}}

\begin{ex}[Principal outer automorphisms exist]\label{example1}
We claim that, for each rank $r \geq 3$, the examples constructed in \cite{cl15} to have the principal index list are in fact principal outer automorphisms. For each $r \geq 3$, we denote this outer automorphism in $Out(F_r)$ by $\vphi_r$. By \cite[Theorem 6.2]{cl15} we know that each $\vphi_r$ is an ageometric fully irreducible outer automorphism. To show that $\vphi_r$ is principal we must prove that $IW(\vphi_r)=\pgraph$.

The proof of \cite[Proposition 4.3]{cl15} indicates that the stable Whitehead graph at each vertex is a complete graph. Since there are no periodic Nielsen paths (again by \cite[Theorem 6.2]{cl15}), this indicates that each component of the ideal Whitehead graph is a complete graph. Since the map is constructed to have the principal index list, this implies that the ideal Whitehead graph is in fact $\pgraph$, as desired.
\end{ex}

\begin{ex}\label{example2}
We give an example of an ageometric fully irreducible outer automorphism $\vphi'$ that is a composition $\psi \circ \vphi$ where:
\begin{itemize}
\item $\vphi$ is a principal outer automorphism and
\item $\vphi'$ is not principal, but is only a principal basin outer automorphism.
\end{itemize}

This example reveals the necessity on our restricting in Theorem A to fold lines consisting of proper full folds and, in so doing, indicates that, unlike in the \teich space setting, being in the principal stratum is not quite an ``open'' condition.

The outer automorphism $\vphi$ is $\vphi_3$ of Example \ref{example1} (from \cite{cl15}). Let $f\from \Gamma_1 \to \Gamma_1$ be the  train track representative used by Coulbois-Lustig to define it (see Figure \ref{f:NotOpen} below). We use the notation $e_1, e_2, e_3, e_4, e_5$ to respectively  denote the edges $\bar{c_3}, \bar{c_2}, \bar{c_1}, \bar{a_1}, \bar{b_1}$ in \cite[\S 3]{cl15}(see ``maximal odd'' case on pages 1117-1118). We replace $f$ with a high enough power to be both transparent and legalizing in the sense of Definition \ref{d:Legalizing} (see Proposition \ref{p:LegalizingMaps}). The only illegal turn of $f$ is $\{\bar e_3, \bar e_4 \}$ and all other gates are singleton directions.

We shall define a map $k\from \Gamma_1 \to \Gamma_1$ as a composition of folds, namely $g_1, g_2, g_3$ and a homeomorphism $h$, and we define $f' := k \circ f$. The map $f'$ will be shown to be a train track map representing an ageoemtric fully irreducible $\vphi' \in \out$.
\begin{equation}{\label{e:f'}}
\xymatrix{\Gamma_1 \ar[r]_{f} \ar@/_3pc/[rrrrr]_{f'} & \Gamma_1 \ar@/^2pc/[rrrr]^{k} \ar[r]_{g_1} & \Gamma_2 \ar[r]_{g_2} & \Gamma_3   \ar[r]_{g_3}  & \Gamma_4  \ar[r]_{h}   & \Gamma_1
\\}\end{equation}
The maps $g_1, g_2, g_3, h$ are described in Figure \ref{f:NotOpen}.
Composing the maps in the diagram yields:
\[ k(e_1) = \bar{e_1} \bar{e_4}, \quad
k(e_2) = e_1, \quad
k(e_3) = e_2 \bar{e_5}, \quad
k(e_4) = e_2 \bar{e_5}, \quad
k(e_5) = \bar{e_3} \bar{e_5}
\]
\begin{figure}[hb]\label{f:NotOpen}
\includegraphics[width=1.25in]{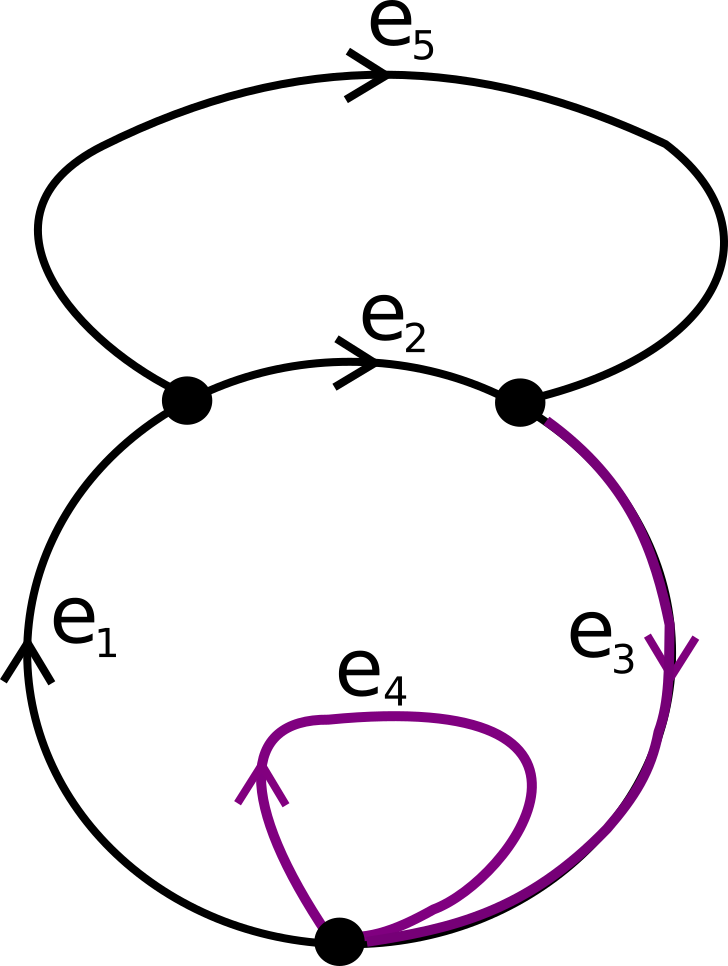}
\includegraphics[width=.4in]{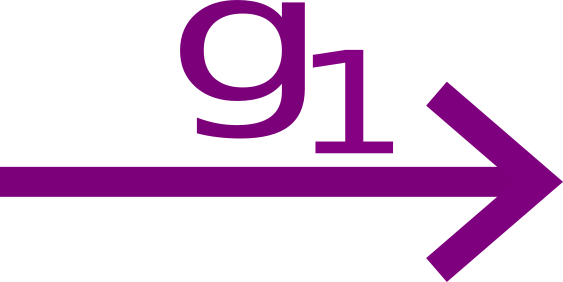}
\includegraphics[width=1.25in]{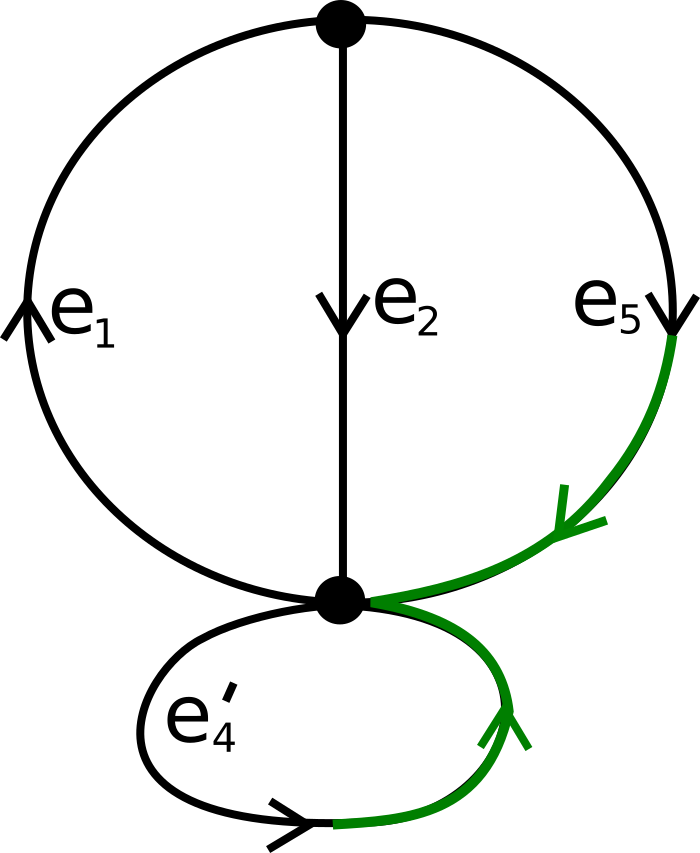}
\includegraphics[width=.4in]{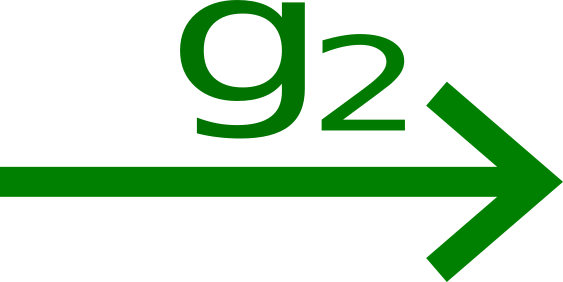}
\includegraphics[width=1.25in]{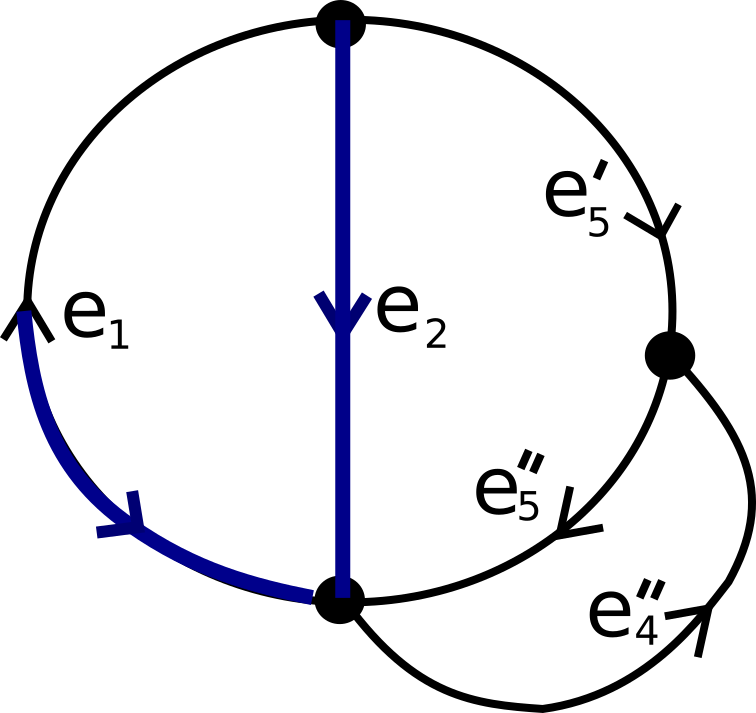}
\includegraphics[width=.4in]{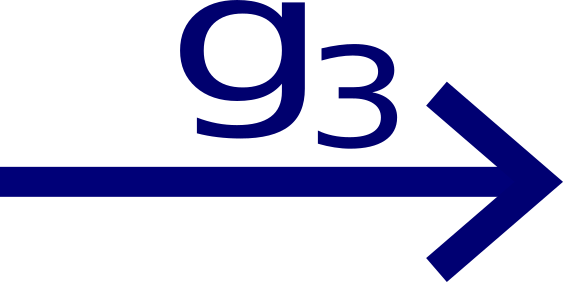}
 \includegraphics[width=1.25in]{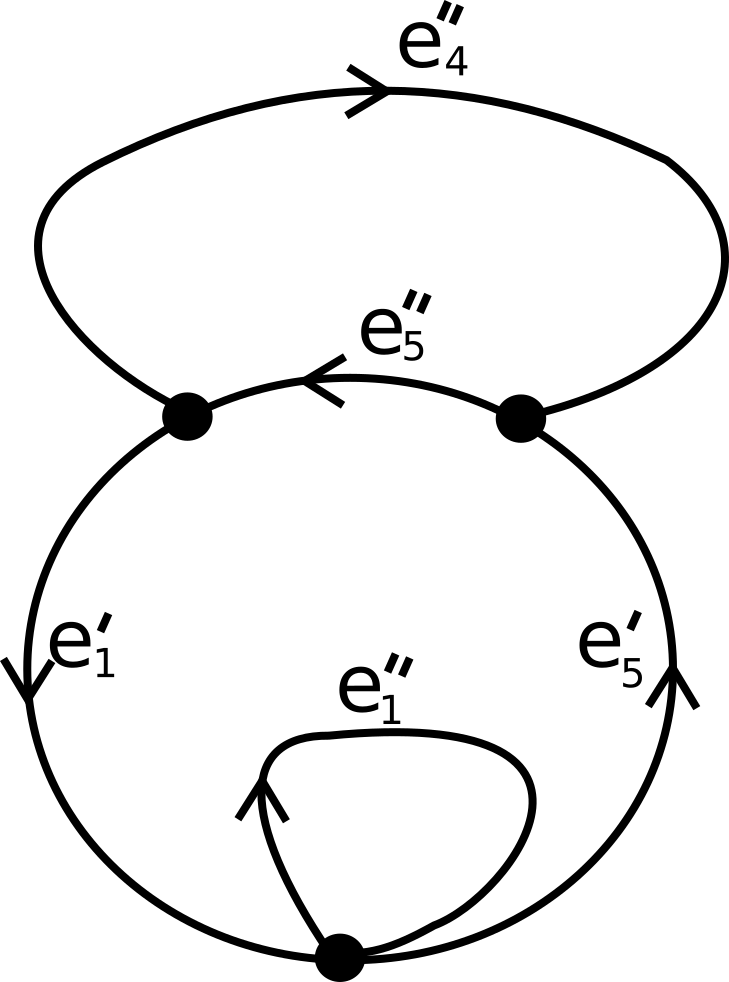}
\includegraphics[width=.4in]{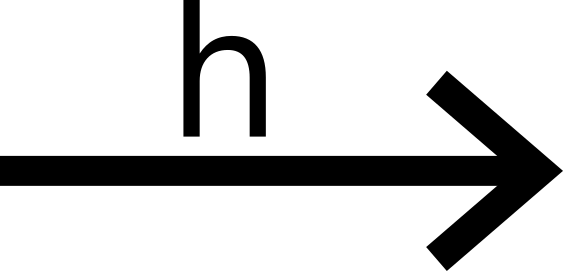}
 \includegraphics[width=1.25in]{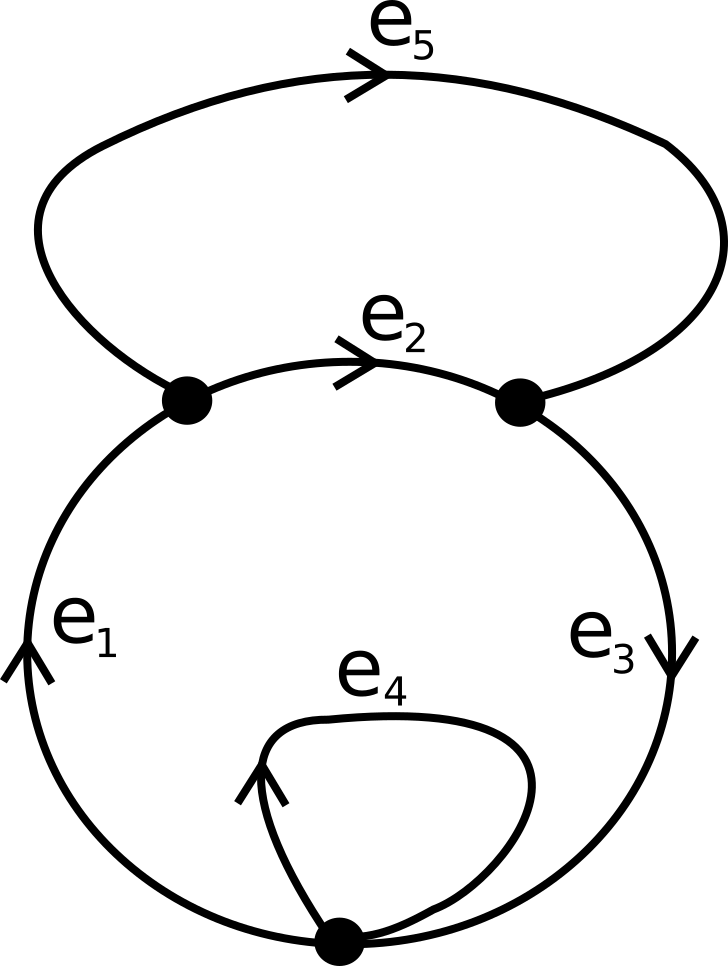}
 \caption{From left to right the graphs are $\Gamma_1$, $\Gamma_2$, $\Gamma_3$, $\Gamma_4$, $\Gamma_1$. Colors indicate the (partial) edges folded by the subsequent maps. $g_1$ is the full fold of $\ol{e_3}$ and $\ol{e_4}$ (and $e_4'$ is the edge formed by the identification of $\ol{e_3}$ and $\ol{e_4}$). $g_2$ is a partial fold of $\ol{e_4'}$ and $\ol{e_5}$ (and $e_5''$ is the edge formed from the identification of the initial portions of $\ol{e_4'}$ and $\ol{e_5}$, and $e_5'$ is the portion of $e_5$ not folded, and $e_4''$ is the portion of $e_4$ not folded). $g_3$ is a proper full fold of $e_1$ over $\ol{e_2}$ (and $e_1''$ is the portion of $e_1$ remaining after the fold), i.e. $g_3(e_1) = e_1' e_1''$. $h$ is a homeomorphism sending $e_1''$ to $\ol{e_4}$, and $e_1'$ to $\ol{e_1}$, and $e_4''$ to $e_2$, and $e_5''$ to $\ol{e_5}$, and $e_5'$ to $\ol{e_3}$. The bottom vertex in $\Gamma_1$ is $v_1$, the upper left vertex is $v_2$, and the upper right vertex is $v_3$.}
\end{figure}

Notice the following facts:
\begin{enumerate}
\item The only $k$-prenull turn is $\{\bar{e_3}, \bar{e_4}\}$.
\item $\{\bar{e_3}, \bar{e_4}\}$ is the only $f$-illegal turn, and it is not in $Im(Tf)$.
\item $Im(Dk)$ does not contain $\bar{e_4}$.
\item $\{\bar{e_3}, \bar{e_4}\}$ is not a $k$-taken turn. The $k$-taken turns are: $\{\bar{e_2},\bar{e_5}\}, \{e_3, \bar{e_5}\}, \{e_1, \bar{e_4}\}$.
\item $k(v_1)= k(v_3) = v_2$ and $k(v_2) = v_1$ (see the bottom of the label of Figure \ref{f:NotOpen}).
\item The $Tk$ images of all of the $f$-legal turns at $v_1$ and $v_2$ are:
$\{e_2, e_5\}, \{\bar{e_1}, e_5\},
 \{\bar{e_1}, e_2\}$ at the vertex $v_2$ and
$\{e_1, \bar{e_3}\}, \{e_1, e_4\}, \{e_4, \bar{e_3}\}$
 at the vertex $v_1$.
\item $f$ takes all turns not involving $\bar e_4$.
\end{enumerate}
\end{ex}

\begin{lem}
$f'=k \circ f$ is an irreducible train track map.
\end{lem}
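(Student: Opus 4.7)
The plan is to verify, in turn, that (i) $f'$ is a graph map with $f'(e)$ immersed for every edge $e$; (ii) every iterate $(f')^{n}$ is locally injective on edge interiors, so $f'$ is a train track map; and (iii) the transition matrix $A_{f'}$ is irreducible, in fact Perron--Frobenius. The combinatorial engine behind (i) and (ii) is that $\{\bar{e_3},\bar{e_4}\}$ is the only candidate for an $f'$-prenull turn and that it is neither $k$-taken nor in $\mathrm{Im}(Tk)$; the use of fact~(3) to exclude $\{\bar{e_3},\bar{e_4}\}$ from $\mathrm{Im}(Tk)$ is the main obstacle.

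For (i), write $f(e)=e_{j_{1}}\cdots e_{j_{m}}$; a cancellation in $k(f(e))$ can occur only at a join turn $\{\overline{e_{j_\ell}},e_{j_{\ell+1}}\}$ that is $k$-prenull. By fact~(1) the unique $k$-prenull turn is $\{\bar{e_3},\bar{e_4}\}$, which is $f$-prenull by transparency of $f$ and therefore not $f$-taken (since $f$ is a train track map and a taken prenull turn would create backtracking in $f^{2}(e)$), so it cannot occur as a join turn of $f(e)$.

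For (ii), observe that $Tf'(\tau)=Tk(Tf(\tau))$ is degenerate precisely when either $Tf(\tau)$ is already degenerate, forcing $\tau=\{\bar{e_3},\bar{e_4}\}$ by transparency of $f$, or $Tf(\tau)$ is a non-degenerate $k$-prenull turn, i.e.\ $Tf(\tau)=\{\bar{e_3},\bar{e_4}\}$, which is excluded by fact~(2). Hence $\{\bar{e_3},\bar{e_4}\}$ is the unique $f'$-prenull turn. If $\tau$ is any $f'$-illegal turn, some iterate $(Tf')^{j}(\tau)$ equals $\{\bar{e_3},\bar{e_4}\}$; when $j\ge 1$ this places $\{\bar{e_3},\bar{e_4}\}$ in $\mathrm{Im}(Tf')\subseteq\mathrm{Im}(Tk)$, contradicting fact~(3), since $\bar{e_4}\notin\mathrm{Im}(Dk)$ implies no element of $\mathrm{Im}(Tk)$ contains $\bar{e_4}$. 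So $\{\bar{e_3},\bar{e_4}\}$ is the only $f'$-illegal turn, and it is not $f'$-taken: it is not among the $k$-taken turns listed in fact~(4), and by the inclusion just displayed it is not in $\mathrm{Im}(Tk)$. Thus every $f'$-taken turn is $f'$-legal, so $(f')^{n}(e)$ remains reduced for every $n\ge 1$ and every edge $e$.

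For (iii), since there is no cancellation in any $(f')^{n}(e)$, the transition matrix factors as $A_{f'}=A_{f}\cdot A_{k}$. Full irreducibility of $\vphi$ makes $A_{f}$ Perron--Frobenius; after replacing $f$ with a further power (which preserves transparency, legalization and the relevant facts, since illegal turns are unchanged and $\mathrm{Im}(Tf^{N})\subseteq\mathrm{Im}(Tf)$), we may assume $A_{f}$ is strictly positive. Reading off the explicit formulas for $k(e_{1}),\ldots,k(e_{5})$ in the example, every column of $A_{k}$ has a nonzero entry, so $A_{f'}=A_{f}A_{k}$ is strictly positive. Therefore $f'$ is Perron--Frobenius and, in particular, an irreducible train track map.
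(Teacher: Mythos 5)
Your proof is correct, and it reaches the conclusion by a somewhat different route than the paper. The paper shows by induction on $p$ that $f\circ(f')^p(e)$ is immersed: it tracks turns with respect to the \emph{old} train track structure (that of $f$), using fact~(7) to promote turns in $\mathrm{Im}(Tf)$ to $f$-taken turns, and then using facts~(1)--(4) to show that passing through $k$ and then $f$ never produces the turn $\{\bar e_3,\bar e_4\}$. You instead directly characterize the illegal-turn structure of $f'$ itself: using transparency and facts~(1)--(3) you identify $\{\bar e_3,\bar e_4\}$ as the unique $f'$-prenull (hence, via $\mathrm{Im}(Tf')\subseteq\mathrm{Im}(Tk)$, the unique $f'$-illegal) turn, and then facts~(3)--(4) show it is not $f'$-taken, so every $f'$-taken turn is $f'$-legal. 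This is the standard "taken implies legal" criterion for a train track map, and it is arguably cleaner and more informative---it exhibits the train track structure of $f'$ explicitly, and it avoids fact~(7) entirely. One small expository gap: the final step "so $(f')^n(e)$ remains reduced" deserves a sentence explaining that every turn appearing in $(f')^n(e)$ is an iterated $Tf'$-image of an $f'$-taken turn, and legal turns have nondegenerate images under all iterates; this is standard but worth spelling out. For irreducibility you and the paper do essentially the same thing: the paper phrases it as "$f(e)$ contains all edges and $k$ is onto," while you phrase it as $A_{f'}=A_fA_k$ being strictly positive; both implicitly require $f$ to already be a high enough power, which the paper's preceding setup (and your remark about taking a further power) supplies.
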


\begin{proof}
We first show that $f'$ is a train track map. Suppose not - then for some $p \in \NN$ and edge $e$, $(f')^p(e)$ would contain backtracking. Note that then $f \circ (f')^{p}(e)$ would contain backtracking. We show by induction on $p$ that $f \circ (f')^{p}(e)$ does not contain backtracking. Since $f$ is a train track map, $f(e)$ can contain no backtracking. Inductively assume $\beta = f \circ (f')^{p-1}(e)$ has no backtracking. All of the turns in $\beta$ are either $f$-taken or in $Im(Tf)$, so also $f$-taken. No $f$-taken turn is $k$-prenull, so $k(\beta)$ has no backtracking. All turns in $k(\beta)$ are either $k$-taken or in $Im(Tk)$ so by properties (3) and (4) above, $k(\beta)$ does not contain $\{\bar e_3, \bar e_4\}$. So by (2) it is $f$-legal. This completes the induction step.
The fact that $f'$ is irreducible follows from the fact that $f(e)$ contains all edges and $k$ is onto.
\end{proof}

\begin{lem}{\label{l:iNPsDisappearUnderHighIteration}}
$f'=k \circ f$ has no PNPs.
\end{lem}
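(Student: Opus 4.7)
The plan is to derive a contradiction by assuming an iPNP $\rho$ exists for $f'$, identifying the only possible location of its illegal junction turn, and then showing that $f'_\#(\rho)$ is in fact legal. The argument is structured to mirror the reasoning used in the proof of Lemma~\ref{l:Neighborhood}(b), but exploits the very restrictive properties (1)--(5) enumerated just after Figure~\ref{f:NotOpen}.

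First, I would classify the $f'$-illegal turns. A turn $\tau$ is $f'$-illegal iff some iterate $(Tf')^n(\tau)$ is degenerate, and $Tf'(\tau) = Tk(Tf(\tau))$. Using that $\{\bar{e_3},\bar{e_4}\}$ is the unique $k$-prenull turn (property~(1)) and also the unique $f$-illegal turn, and that $\{\bar{e_3},\bar{e_4}\} \notin Im(Tf)$ (property~(2)), the first iterate can only be degenerate when $\tau = \{\bar{e_3},\bar{e_4}\}$. A turn $\tau'$ mapping to $\{\bar{e_3},\bar{e_4}\}$ at any later iterate would require $\bar{e_4}\in Im(Dk)$, which is ruled out by property~(3). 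Hence the unique $f'$-illegal turn is $\{\bar{e_3},\bar{e_4}\}$, and in particular the sets of $f$-legal and $f'$-legal paths coincide.

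Next, suppose $\rho = \bar\al\beta$ is an iPNP for $f'$, with $\al,\beta$ legal paths from a common vertex and the initial turn $\{D\al,D\beta\}$ the unique illegal turn $\{\bar{e_3},\bar{e_4}\}$ (iPNP structure, following the analog of Lemma~\ref{l:iNP} applied to an appropriate power of $f'$). Then $\al,\beta$ are $f$-legal and $\{\al,\beta\}$ is an illegal long turn for $f$. Because $f$ is legalizing, this long turn is either extendable or safe, and in both cases $f_\#(\bar\al\beta)$ is an $f$-legal path.

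The key remaining step is to show that $k_\#$ preserves $f$-legality. For any $f$-legal path $\sigma$, no turn of $\sigma$ is $\{\bar{e_3},\bar{e_4}\}$ and hence (by property~(1)) no turn of $\sigma$ is $k$-prenull, so $k_\#(\sigma)=k(\sigma)$ without any cancellation. The interior turns of each $k(e_i)$ are $k$-taken, and by property~(4) no $k$-taken turn is $\{\bar{e_3},\bar{e_4}\}$; the junction turns of $k(\sigma)$ are of the form $Tk(\tau_i)$, which by property~(3) cannot equal $\{\bar{e_3},\bar{e_4}\}$ since this would force $\bar{e_4}\in Im(Dk)$. Therefore $k(\sigma)$ is $f$-legal. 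Applying this with $\sigma = f_\#(\rho)$, we obtain that $f'_\#(\rho)=k_\#(f_\#(\rho))$ is $f$-legal, hence $f'$-legal. Since $f'$ preserves legality, $(f')^R_\#(\rho)$ is $f'$-legal for all $R\ge 1$, contradicting $(f')^R_\#(\rho)=\rho$ together with the fact that $\rho$ contains the $f'$-illegal turn $\{\bar{e_3},\bar{e_4}\}$. Thus $f'$ has no iPNPs, and hence no PNPs. The main delicate point is the bookkeeping in the third step: properties~(1), (3), and (4) of $k$ must combine just right to ensure that neither the interior nor the junction turns of $k(\sigma)$ can recreate the unique illegal turn $\{\bar{e_3},\bar{e_4}\}$. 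Once this is verified, the contradiction is immediate.
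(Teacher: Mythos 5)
Your proof is correct and follows essentially the same strategy as the paper's: reduce to an iPNP $\rho = \bar\alpha\beta$, use that $f$ is legalizing to conclude $f_\#(\rho)$ is $f$-legal, and use properties (1), (3), (4) of $k$ to show that neither the interior nor the junction turns of $k(f_\#(\rho))$ can reproduce $\{\bar e_3, \bar e_4\}$, so the illegal turn never reappears under iteration. The only real difference is that you first establish explicitly that $\{\bar e_3, \bar e_4\}$ is the unique $f'$-illegal turn (hence $f$-legal and $f'$-legal paths coincide), a fact the paper uses implicitly but does not state.
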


\begin{proof}
Recall that $f$ is legalizing and transparent, which implies that if $\al$ and $\beta$ are legal paths initiating at the same vertex then, without loss of generality, either $f(\al)$ is an initial subpath of $f(\beta)$ (then $\{\al,\beta\}$ is an $f$-extendable long turn) or $f_\#(\bar\al \beta) = \bar \al' \beta'$ is $f$-legal, where $\al', \beta'$ are terminal subsegments of $\al,\beta$ respectively.

Now suppose $\rho = \bar\al\beta$ is an iPNP for $f'$. Since there exists a $p>0$ so that $(f')^p_\#(\rho) = \rho$, we would then have that $\rho$ is not $f'$-extendable, which would imply that it is not $f$-extendable. By the first paragraph we have $f_\#(\rho) = \bar\al'\beta'$, where $\al', \beta'$ are $f$-taken paths and the turn $\{D\al', D\beta'\}$ is $f$-legal, so it is not equal to $\{\bar e_3, \bar e_4\}$, the only $k$-prenull turn. Thus $k (\bar \al' \beta')$ contains no backtracking. Therefore, the turns in $f'(\rho) = k (\bar \al' \beta')$ are in the image of $Tk$ or are $k$-taken turns. By properties (3) and (4) above, $f'(\rho)$ does not contain $\{\bar e_3, \bar e_4\}$, so is $g$-legal. Continuing this way, applying $k$ and $g$ alternately to $k(\al'\beta')$, we see that the turn $\{\bar e_3, \bar e_4\}$ never appears, so $f'(\rho)$ is $f'$ legal, contradicting the assumption that $\rho$ is a PNP.
\qedhere
\end{proof}

\begin{lem}{\label{l:PathologyFI}}
Let $f'=k \circ f$ represents an ageometric fully irreducible outer automorphism.
\end{lem}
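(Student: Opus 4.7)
My plan is to invoke the Ageometric Full Irreducibility Criterion (Proposition \ref{prop:FIC}) applied to $f'=k\circ f$. The two preceding lemmas have already established that $f'$ is a PNP-free irreducible train track map. It therefore suffices to verify the remaining two hypotheses of the FIC, namely that the transition matrix of $f'$ is Perron--Frobenius and that every local Whitehead graph $LW(f',v)$ is connected.

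I would handle the Perron--Frobenius condition first. Since $f$ represents the principal fully irreducible $\vphi_3$ from Example \ref{example1}, its transition matrix is PF; after further replacing $f$ by a sufficient power (as permitted by the setup of Example \ref{example2}), we may assume the transition matrix of $f$ is already strictly positive, so that $f(e)$ traverses every edge of $\Gamma_1$ for each $e\in E(\Gamma_1)$. The explicit formulas for the edge maps $k(e_i)$ show that the union of the $k(e_i)$ exhausts $E(\Gamma_1)$, so the composition $f'(e)=k(f(e))$ traverses every edge of $\Gamma_1$, making the transition matrix of $f'$ strictly positive and hence PF.

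For the connectedness of the local Whitehead graphs, I would argue vertex by vertex through $v_1,v_2,v_3$. Since $\vphi_3$ is principal, each $LW(f,v_i)$ contains the triangular $\pgraph$-component at $v_i$ on three periodic directions, and since $f$ takes every turn not involving $\bar e_4$, the graph $LW(f,v_i)$ in fact contains every turn at $v_i$ not involving $\bar e_4$. Using the explicit description of $Dk$ together with the list of $k$-taken turns $\{\bar e_2,\bar e_5\}$, $\{e_3,\bar e_5\}$, $\{e_1,\bar e_4\}$, I would show that each $f$-taken turn with nondegenerate $Tk$-image descends to an $(f')$-taken turn, and that the three $k$-taken turns above provide the remaining edges needed to connect the directions missed by $\mathrm{Im}(Dk)$. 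The main technical hurdle is at the vertex $v_1$, where $\bar e_4\notin\mathrm{Im}(Dk)$ and where the unique $f$-illegal turn $\{\bar e_3,\bar e_4\}$ sits; the $k$-taken turn $\{e_1,\bar e_4\}$ is what places $\bar e_4$ into $LW(f',v_1)$ and prevents it from becoming an isolated vertex. Once this combinatorial verification is complete, the FIC directly yields that $f'$ represents an ageometric fully irreducible outer automorphism of $F_3$, as desired.
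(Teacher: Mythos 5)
Your proposal is correct and follows essentially the same route as the paper: both invoke the FIC after noting that the preceding lemmas have already established that $f'$ is a PNP-free irreducible train track map, and then verify connectedness of the local Whitehead graphs at $v_1$, $v_2$, $v_3$. The one place you diverge is in showing that $\bar e_4$ is not isolated in $LW(f',v_1)$: you point to the specific $k$-taken turn $\{e_1,\bar e_4\}$ to supply the needed edge, whereas the paper instead appeals to the general fact that no irreducible train track map has a local Whitehead graph with an isolated vertex; both arguments work, yours being slightly more explicit and the paper's being more structural.
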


\begin{proof}
By the FIC, it suffices to show that each local Whitehead graph is connected.

By item (6), we have that all turns at $v_2$ are $f'$-taken and all turns at $v_1$ not involving $d=\bar{e_4}$ are $f'$-taken. It follows that $LW(f',v_2)$ is a triangle and $LW(f',v_1)$ contains a triangle. Since $f'$ is an irreducible train track map and for no such map does $LW(v_1,f')$ have an isolated vertex, we have that $\bar e_4$ in $LW(v_1,f')$ is also connected via an edge to another vertex, hence $LW(v_1,f')$ is connected. By (4), $LW(f',v_3)$ is also connected.
\qedhere
\end{proof}

\begin{lem}
Let $f'=k \circ f$ and suppose that $f'$ represents the automorphism $\vphi' \in Out(F_3)$. Then the ideal Whitehead graph $IW(\vphi')$ is a union of two triangles.
\end{lem}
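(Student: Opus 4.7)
The plan is to apply Lemmas \ref{l:iNPsDisappearUnderHighIteration} and \ref{l:PathologyFI} to conclude that $f'$ is a PNP-free train track representative of the ageometric fully irreducible $\vphi'$, so that by Definition \ref{d:WhiteheadGraphs}
\[
IW(\vphi') \;=\; \bigsqcup_{v \text{ $f'$-principal}} SW(f',v).
\]
Thus I would reduce the problem to identifying the $f'$-principal vertices of $\Gamma_1$ and computing the stable Whitehead graph at each.

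First I would track how $f'=k\circ f$ permutes the vertices. Because $f$ is convenient and therefore fixes all vertices, and $k$ acts by $k(v_1)=k(v_3)=v_2$ and $k(v_2)=v_1$ (item (5) of Example \ref{example2}), I would conclude that $v_1$ and $v_2$ are $f'$-periodic of period two while $v_3$ is merely preperiodic; hence $v_3$ cannot be $f'$-principal and contributes nothing to $IW(\vphi')$.

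Next I would pin down the periodic directions at $v_1$ and $v_2$ using the fact that for a train track map every taken turn is legal, so that $Df'$ is injective on the directions of any taken turn. At $v_2$ all three turns are $f'$-taken by item (6), and item (3) gives $\bar{e_4}\notin Im(Df')$, so $Df'\colon \mD(v_2)\to \mD(v_1)$ is a bijection onto the three non-$\bar{e_4}$ directions at $v_1$. The same argument at $v_1$, using that the three triangle pairs are $f'$-taken (from the proof of Lemma \ref{l:PathologyFI}), shows $Df'$ is injective on the three corresponding directions of $\mD(v_1)$ and hence bijects them onto $\mD(v_2)$. Composing, $(Df')^2$ restricts to a self-bijection of both $\mD(v_2)$ and $\mD(v_1)\setminus\{\bar{e_4}\}$, so every direction in these sets is $f'$-periodic; the direction $\bar{e_4}$, on the other hand, cannot be $f'$-periodic since it lies outside $Im(Df')$.

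I would then conclude that $v_1$ and $v_2$ are both $f'$-principal, with $SW(f',v_2)=LW(f',v_2)$ the triangle from Lemma \ref{l:PathologyFI} and $SW(f',v_1)$ the subtriangle of $LW(f',v_1)$ on the three non-$\bar{e_4}$ vertices, so that $IW(\vphi')$ is a disjoint union of two triangles. The main point of care will be showing that the three ``good'' directions at $v_1$ are genuinely $f'$-periodic rather than merely preperiodic, while $\bar{e_4}$ is not; this is exactly what the combined injectivity-and-non-surjectivity analysis of $Df'$ at $v_1$ is designed to deliver.
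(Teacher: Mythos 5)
Your proposal is correct and follows essentially the same route as the paper: identify the $f'$-principal vertices ($v_1,v_2$; $v_3$ is preperiodic since $k(v_3)=v_2$), use the fact that taken turns are legal to show $Df'$ is injective on the relevant direction sets, and conclude that $SW(f',v_2)$ and $SW(f',v_1)$ are both triangles while $v_3$ contributes nothing. The only presentational difference is that you argue periodicity of directions directly by composing the two bijections $Df'\colon\mD(v_2)\leftrightarrow\mD(v_1)\setminus\{\bar e_4\}$ to get a self-bijection under $(Df')^2$, whereas the paper passes to a rotationless power $f'^p$, fixes $v_1,v_2$, and invokes the surjection $Df'^p\colon LW\to SW$ together with a gate count at $v_1$ to cap the number of $SW$-vertices at three; both deliver the same conclusion and neither is materially simpler.
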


\begin{proof}
By the proof of Lemma \ref{l:PathologyFI}, $LW(f',v_2)$ is a triangle and $LW(f',v_1)$ contains a triangle. Now $v_1$ and $v_2$ are permuted by $f$ and $k$, hence are permuted by $f'$. Thus they are fixed by a rotationless power $f'^p$ of $f'$. Moreover, $Df'^p$ cannot identify any of the directions at $v_2$, since that would collapse a taken turn. Therefore, the triangle in $LW(f',v_2)$ is taken by $f'^p$ to a triangle in $SW(f'^p, v_2)$.
Similarly, no two of the directions $\{d_1, d_2, d_3 \}$ at $v_1$, distinct from $d$, can be identified by $Df'^p$. Thus their images span a triangle in $SW(f'^p,v_1)$.
Thus $SW(f',v_1)$ contains a triangle. But since $d$ forms an illegal turn with some other direction, there are only 3 gates, and the graph is in fact a triangle.
Lastly note that $k(v_3)= v_2$, so $v_3$ is not a principal vertex (this is in fact the key for dropping from a union of 3 triangles in $IW(f)$ to two triangles in $IW(f')$).
The ideal Whitehead graph is the union of the stable Whitehead graphs of principal vertices glued along PNPs but, since $f'$ admits none, $IW(\vphi')$ is a union of two disjoint triangles.
\end{proof}

We can now prove one of the main results stated in the introduction:

\begin{mainthmB}
There exists a principal fully irreducible outer automorphism $\vphi \in Out(F_3)$ with a train track representative $f \colon \Gamma \to\Gamma$ with a Stallings fold decomposition $\mathfrak f$ such that, for each $n\ge 1$, there exists a nonprincipal fully irreducible outer automorphism  $\psi_n \in Out(F_3)$ with a train track representative $g_n \colon \Gamma \to\Gamma$ with a Stallings fold decomposition $\mathfrak g_n$ such that $\mathfrak g_n$ starts with $\mathfrak f^{n}$.
\end{mainthmB}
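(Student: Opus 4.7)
The plan is to build each $\psi_n$ out of the same ingredients used in Example \ref{example2}, substituting $f^n$ for $f$ throughout. Take $\vphi$ to be the principal outer automorphism from Example \ref{example2} (replaced, if necessary, by a high enough power so that its Coulbois--Lustig train track representative $f \colon \Gamma_1 \to \Gamma_1$ is transparent and legalizing, with unique illegal turn $\{\bar e_3, \bar e_4\}$), and let $\mathfrak{f}$ be a Stallings fold decomposition of $f$. Let $k \colon \Gamma_1 \to \Gamma_1$ be the map constructed in Example \ref{example2} as the composition of the three folds $g_1, g_2, g_3$ followed by the homeomorphism $h$. For each $n \geq 1$, set $g_n := k \circ f^n$ and let $\psi_n \in Out(F_3)$ be the outer automorphism it represents. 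Juxtaposing $n$ copies of $\mathfrak{f}$ and then appending the fold decomposition of $k$ gives a Stallings fold decomposition $\mathfrak{g}_n$ of $g_n$ that starts with $\mathfrak{f}^n$, as required.

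Before verifying anything else, I would record the structural properties of $f^n$ that will be needed and that transfer immediately from the corresponding properties of $f$: the unique illegal turn of $f^n$ is still $\{\bar e_3, \bar e_4\}$; this turn is not in $\mathrm{Im}(T(f^n))$; $f^n$ is transparent and legalizing; and every $f$-taken turn is also $f^n$-taken (since $f(e)$ is contained in $f^n(e)$ because $f$ maps each edge over the whole graph). Each of these is a routine check from the definitions and from the analogous facts for $f$ already used in Example \ref{example2}.

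With those observations, the arguments in Example \ref{example2} transfer verbatim with $f$ replaced by $f^n$. Concretely, the backtracking induction in Example \ref{example2} shows that $g_n$ is a train track map; the argument of Lemma \ref{l:iNPsDisappearUnderHighIteration} shows that $g_n$ has no PNPs; the local Whitehead graph computation of Lemma \ref{l:PathologyFI} together with the Ageometric Full Irreducibility Criterion (Proposition \ref{prop:FIC}) shows that $\psi_n$ is ageometric fully irreducible; and the vertex $v_3$ is collapsed by $k$ onto $v_2$, so $v_3$ is not $g_n$-periodic and hence not $g_n$-principal. The same final computation as in Example \ref{example2} then shows that $IW(\psi_n)$ is a disjoint union of exactly two triangles, whereas the principal graph $\Delta_3$ is a disjoint union of three triangles. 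Thus $\psi_n$ is a nonprincipal ageometric fully irreducible outer automorphism, and this will complete the proof.

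The main obstacle is checking, uniformly in $n$, that the local Whitehead graphs of $g_n$ are connected (so that the FIC applies) and that $\psi_n$ genuinely drops below $\Delta_3$ rather than degenerating further. The first point reduces to the observation that the $g_n$-taken turns include all $f'$-taken turns from Example \ref{example2}, so connectedness of $LW(g_n, v_i)$ at each vertex follows from the analogous connectedness for $f'$. The second point reduces to the fact that the vertex collapse $k(v_3) = v_2$ happens independently of $n$, so exactly one triangle is lost for every $\psi_n$.
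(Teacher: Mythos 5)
Your proof is correct and takes essentially the same approach as the paper: the paper's own proof of Theorem~B simply observes that the lemmas of Example~\ref{example2} apply verbatim with $f$ replaced by $f^n$, yielding $g_n = k \circ f^n$ whose ideal Whitehead graph is a disjoint union of two triangles rather than three. Your elaboration of why the relevant properties (transparency, the legalizing property, the unique illegal turn $\{\bar e_3, \bar e_4\}$, and the $f$-taken turns) carry over from $f$ to $f^n$, and of the two potential obstacles at the end, is exactly the content the paper compresses into ``apply verbatim.''
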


\begin{proof} The lemmas and proofs above apply verbatim when $f$ is replaced with $f^n$. By the FIC (Proposition~\ref{prop:FIC}), both $f$ and $g_n = k \circ f^n$ represent fully irreducible outer automorphisms $\vphi, \psi_n$ of $F_3$. The ideal Whitehead graph of $\vphi$ is a union of three triangles, while $IW(\psi_n)$ is a union of two triangles. Hence $\vphi$ is principal, while $\psi_n$ is not.
\end{proof}

Theorem~B immediately implies:

\begin{corC}
For $r=3$, there exist a principal periodic geodesic $\mL\in \mP_r$ in $\os$ and a sequence of nonprincipal periodic geodesics $\{\mL_n\}_{n=1}^\infty \subseteq B\mP_r - \mP_r$ such that $\displaystyle \lim_{n \to \infty} \mL_n = \mL$.
\end{corC}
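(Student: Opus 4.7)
The plan is to take the principal fully irreducible $\vphi$ and the sequence $\{\psi_n\}$ produced by Theorem~B and to set $\mL := A_\vphi$ and $\mL_n := A_{\psi_n}$. Since $\vphi$ is principal we immediately have $\mL \in \mP_3$, so the work is in verifying that (i) each $\mL_n$ lies in $B\mP_3 \setminus \mP_3$ and (ii) $\mL_n \to \mL$ in the topology on $\periodiclines$.

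For (i), the fact that $\psi_n$ is nonprincipal (part of the conclusion of Theorem~B) gives $\mL_n \notin \mP_3$. To place $\mL_n$ in $B\mP_3$, I would invoke the analysis of $IW(\psi_n)$ carried out in Example~\ref{example2}, where $\psi_n$ is shown to be ageometric fully irreducible with ideal Whitehead graph a disjoint union of two triangles. Since $\pgraph$ for $r=3$ is a disjoint union of three triangles, $IW(\psi_n)$ is a union of components of $\pgraph$, so by Definition~\ref{d:Basin} one has $\mL_n \in B\mP_3$.

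For (ii), fix a basic neighborhood $B(\mL, R, \veps)$ of $\mL$ in $\mF_3$ in the sense of Definition~\ref{d:TopologyOnSpaceFoldLines}. Let $\lambda>1$ be the Perron--Frobenius eigenvalue of $f$, so that one period of the Skora interpretation of $\mathfrak f$ traces out an arc of $\mL$ of length $\log\lambda$ (Definition~\ref{d:PeriodicFoldLine}). Because $\mathfrak g_n$ begins with $\mathfrak f^{\,n}$, the Skora interpretation of $\mathfrak g_n$ yields an arc of $\mL_n$ whose first $n\log\lambda$ units coincide, as an oriented path in $\os$ starting at the same marked metric graph, with the corresponding initial subarc of $\mL$. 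As soon as $n\log\lambda \geq R$ this gives a length-$R$ subsegment of $\mL_n$ lying literally on $\mL$, hence inside $N(\mL,\veps)$ for every $\veps>0$. Thus $\mL_n \in B(\mL,R,\veps)$ for all sufficiently large $n$, which is the required convergence.

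The main subtlety is not conceptual but bookkeeping: one must check that the two fold lines truly agree on the initial $n$ periods, i.e.\ that having $\mathfrak g_n$ literally begin with $\mathfrak f^n$ (the conclusion of Theorem~B) translates into the two Skora paths being identical arcs in $\os$, not merely arcs that pass through the same sequence of simplices. This is immediate from the construction in Definition~\ref{d:PeriodicFoldLine}, since the path $\mL_0\colon[0,\log\lambda] \to \os$ depends only on the fold sequence and on the starting marked metric graph, both of which coincide for $\mL$ and the initial portion of $\mL_n$. Once this is observed, every remaining step is a direct appeal either to Theorem~B or to the analysis of $IW(\psi_n)$ already carried out in Example~\ref{example2}.
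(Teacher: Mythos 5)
Your overall plan is the same one the paper has in mind when it says Corollary~C follows ``immediately'' from Theorem~B: take $\mL = A_\vphi$ and $\mL_n = A_{\psi_n}$. Your verification that $\mL_n \in B\mP_3\setminus\mP_3$ is correct and uses exactly the analysis of $IW(\psi_n)$ from Example~\ref{example2}.

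The convergence argument, however, contains a genuine gap in the step you flagged as ``bookkeeping.'' You assert that the two Skora paths are \emph{identical} arcs on the first $n$ periods because ``the fold sequence and the starting marked metric graph \dots coincide for $\mL$ and the initial portion of $\mL_n$.'' The fold sequences do coincide, but the starting marked \emph{metric} graphs do not. For a periodic fold line, the metric on $\Gamma_0$ is not free: it must be the (normalized) right Perron--Frobenius eigenvector of the transition matrix, so that one full fold cycle scales all edge lengths by $\lambda$ and returns to the same point of projectivized Outer space (cf.\ the identity $\Gamma_{nK} = \frac{1}{\lambda^n}\Gamma_0\cdot\vphi^n$ in \S\ref{ss:FoldLines}). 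The transition matrices of $f$ and of $g_n = k\circ f^n$ are different, so the two PF eigenvectors differ, and $\mL(0)$ and $\mL_n(0)$ are different points of the same open simplex. Consequently the initial arcs of $\mL_n$ pass through the same sequence of simplices as $\mL$ but do not lie on $\mL$; passing through the same simplices alone does not give $\veps$-closeness in the symmetrized Lipschitz metric.

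The conclusion is still true, and the fix is standard Perron--Frobenius dynamics: writing $M_{g_n} = M_f^n M_k$, one has $M_f^n/\lambda_f^n \to v\,w^{\mathsf T}$ (the rank-one projection onto the PF eigenray of $M_f$), from which the right PF eigenvector of $M_{g_n}$ converges projectively to that of $M_f$. Hence $\mL_n(0) \to \mL(0)$ in the simplex, and since the Skora path over a \emph{fixed} finite initial fold sequence depends continuously on the starting metric graph, for any $R$ and $\veps$ the initial length-$R$ subarc of $\mL_n$ lies in $N(\mL,\veps)$ once $n$ is large. That is the correct justification of $\mL_n \to \mL$; it should replace your claim of literal coincidence of arcs.
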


\bibliographystyle{alpha}

\bibliography{PaperRefs}

\end{document}